\def\alg{{A}}
\def\np{\bigskip}
\def\op{{\rm op}}
\def\ddA{{\rm A}}
\def\Br{{\rm Br}}
\def\SBr{{\rm SBr}}
\def\BrD{{\rm{ BrD}}}
\def\BrM{{\rm BrM}}
\def\BrMD{{\rm BrMD}}
\def\ddB{{\rm B}}
\def\ddC{{\rm C}}
\def\ddD{{\rm D}}
\def\eps{{\epsilon}}
\def\SBrM{{\rm SBrM}}
\def\nl{\smallskip\noindent}
\def\lijntje{\vrule height2.4pt depth-2pt width0.5in}
\def\vlijntje{\vrule height0.45in depth0.4pt width0.4pt}
\def\vlijn{\buildrel {\hbox to 0pt{\hss$\textstyle\circ$\hss}}\over\vlijntje}
\def\dlijntje{{\vrule height2pt depth-1.6pt
width0.5in}\llap{\vrule height4pt depth-3.6pt width0.5in}}
\def\vtriple#1\over#2\over#3{\mathrel{\mathop{\kern0pt #2}\limits_{\hbox
to 0pt{\hss$#1$\hss}}^{\hbox to 0pt{\hss$#3$\hss}}}}
\def\rvtriple#1\over#2\over#3{\mathrel{\mathop{\kern0pt #2}\limits_{\hbox
to 0pt{\hss$#3$\hss}}^{\hbox to 0pt{\hss$#1$\hss}}}}
\def\Dn{\vtriple{\scriptstyle n+1}\over\circ\over{}\kern-1pt\lijntje\kern-1pt
\vtriple{\scriptstyle{n}}\over\circ\over{}
\cdots\cdots\vtriple{\scriptstyle 4}\over\circ\over{}\kern-1pt\lijntje\kern-1pt
\vtriple{\scriptstyle 3}\over\circ\over{\buildrel
{\scriptstyle 2}\over\vlijn}\kern-1pt\lijntje\kern-1pt
\vtriple{\scriptstyle 1}\over\circ\over{}\kern-1pt}
\def\An{\vtriple{\scriptstyle 1}\over\circ\over{}\kern-1pt\lijntje\kern-1pt
\vtriple{\scriptstyle{2}}\over\circ\over{}\kern-1pt\lijntje\kern-1pt
\vtriple{\scriptstyle 3}\over\circ\over{}
\cdots\cdots
\vtriple{\scriptstyle 2}\over\circ\over{}\kern-1pt\lijntje\kern-1pt
\vtriple{\scriptstyle 1}\over\circ\over{}\kern-1pt}
\def\Cn{\vtriple{\scriptstyle n-1}\over\circ\over{}
\kern-1pt\lijntje\kern-1pt\vtriple{\scriptstyle{n-2}}\over\circ\over{}
\cdots\cdots
\vtriple{\scriptstyle 2}\over\circ\over{}
\kern-1pt\lijntje\kern-1pt\vtriple{\scriptstyle 1}\over\circ\over{}
\kern-4pt{\dlijntje \kern -25pt>}\kern8pt
\vtriple{\scriptstyle 0}\over\circ\over{}\kern-1pt}
\newcommand{\cA}{\mathcal{A}}
\newcommand{\N}{\mathbb N}
\newcommand{\R}{\mathbb R}
\newcommand{\Z}{\mathbb Z}
\newcommand{\fp}{\mathfrak{p}}
\numberwithin{equation}{section}
\newtheorem{lemma}{Lemma}[section]
\newtheorem{cor}[lemma]{Corollary}
\newtheorem{prop}[lemma]{Proposition}
\newtheorem{thm}[lemma]{Theorem}
\theoremstyle{definition}
\newtheorem{defn}[lemma]{Definition}
\newtheorem{Notation}[lemma]{Notation}
\theoremstyle{remark}
\newtheorem{rem}[lemma]{Remark}
\def\b{\beta}
\begin{document}
\title{Brauer algebras of type B}
\author{Arjeh M.~Cohen, Shoumin Liu}
\maketitle

\begin{abstract}
For each $n\ge1$, we define an algebra having many properties that one
might expect to hold for a Brauer algebra of type $\ddB_{n}$. It is defined
by means of a presentation by generators and relations.  We show that this
algebra is a subalgebra of the Brauer algebra of type $\ddD_{n+1}$ and point
out a cellular structure in it.  This work is a natural sequel to the
introduction of Brauer algebras of type $\ddC_n$, which are subalgebras of
classical Brauer algebras of type $\ddA_{2n-1}$ and differ from the current
ones for $n>2$.
\end{abstract}

\section{Introduction}\label{introduction}
In \cite{CFW2008}, the Brauer algebra $\Br(Q)$ of any simply-laced Coxeter
type was defined in such a way that for $Q=\ddA_{n-1}$, the classical Brauer
algebra of diagrams on $2n$ nodes emerges. For these algebras, a deformation
to a Birman--Murakami--Wenzl (BMW) algebra was defined and in
\cite{CGW2005}, and, for the spherical types among these, the algebra
structure was fully determined in \cite{CGW2008,CW2011}. Again, for
$Q=\ddA_{n-1}$, the classical BMW algebras re-appear. 

The starting point for an extension of these algebras to non-simply laced
diagrams, begun in \cite{CLY2010}, is based on the following obeservation.
It is well known that the Coxeter group of type $\ddB_n$ arises from the
Coxeter group of type $\ddD_{n+1}$ as the subgroup of all elements fixed by
the nontrivial Coxeter diagram automorphism.  Crisp \cite{Crisp1996} showed
that the Artin group of type $\ddB_{n}$ arises in a similar fashion from the
Artin group of type $\ddD_{n+1}$. In this paper, we study the subalgebra
$\SBr(\ddD_{n+1})$ of the Brauer algebra $\Br(\ddD_{n+1})$ spanned by the
monomials fixed under the automorphism induced by the nontrivial Coxeter
diagram automorphism. We also give a presentation of this subalgebra by
generators and relations, which we regard as the definition of a Brauer
algebra of type $\ddB_n$. This paper continues the introduction in
\cite{CLY2010} of a Brauer algebra of type $\ddC_{n}$ of $\Br(\ddA_{2n-1})$
spanned by monomials fixed under the canonical Coxeter diagram automorphism.

Each defining relation (given in Definition \ref{0.1} below) concerns at
most two indices, say $i$ and $j$, and is (up to the parameters in the
idempotent relation) determined by the diagram induced by $\ddB_n$ on
$\{i,j\}$.  The nodes of the Dynkin type $\ddB_n$ are labeled as follows.
$$ \ddB_n\quad = \quad \Cn .$$ The generators of the Brauer algebra
$\Br(\ddB_n)$ are denoted $r_0$,$\ldots$, $r_{n-1}$, $e_0$,$\ldots$,
$e_{n-1}$.  In order to distinguish these from the canonical generators of
the Brauer algebra of type $\ddD_{n+1}$, the latter are denoted
$R_1,\ldots,R_{n+1}$, $E_1,\ldots,E_{n+1}$ instead of the usual lower case
letters (see Definition \ref{1.1}). The diagram for $\ddD_{n+1}$ is depicted
below.
$$\ddD_{n+1}\quad = \quad\Dn.$$

\begin{defn} Let $\sigma$ be the natural isomorphism on $\BrM(\ddD_{n+1})$ (Definition \ref{1.1}), which is
induced by the action of the permutation $(1,2)$ on the indices of the
generators $\{R_i,\,E_i\}_{i=1}^{n+1}$ of $\BrM(\ddD_{n+1})$ and keeping the
parameter $\delta$ invariant. The fixed submonoid of $\BrM(\ddD_{n+1})$
under $\sigma$ is called the \emph{symmetric submonoid} of
$\BrM(\ddD_{n+1})$, and denoted $\SBrM(\ddD_{n+1})$. The linear span of
$\SBrM(\ddD_{n+1})$ is called the \emph{symmetric subalgebra} of
$\Br(\ddD_{n+1})$, and denoted $\SBr(\ddD_{n+1})$.
\end{defn}

\begin{thm}\label{thm:main}
There exists a $\Z[\delta^{\pm1}]$-algebra isomorphism
$$\phi : \Br(\ddB_n)\longrightarrow \SBr(\ddD_{n+1})$$
determined by $\phi(r_0)=R_1R_2$, $\phi(r_i)=R_{i+2}$,
$\phi(e_0)=E_1E_2$,  and $\phi(e_i)=E_{i+2}$, for $0< i\leq n-1$.
Furthermore both algebras  are free of  rank $$f(n):=2^{n+1}\cdot n!!- 2^{n}\cdot n!+(n+1)!!-(n+1)!.$$
\end{thm}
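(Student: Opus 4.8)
The plan is to prove the theorem in three stages: well-definedness of $\phi$, surjectivity onto $\SBr(\ddD_{n+1})$, and a matching rank computation that forces injectivity. For the first stage I would check that the assignment on generators respects every defining relation of $\Br(\ddB_n)$ from Definition~\ref{0.1}, so that $\phi$ extends to an algebra homomorphism $\Br(\ddB_n)\to\Br(\ddD_{n+1})$. Since each relation involves at most two indices $i,j$, this reduces to a finite list of identities in $\Br(\ddD_{n+1})$ among the elements $R_1R_2,E_1E_2$ and the $R_{i+2},E_{i+2}$. For $i,j\ge1$ the relations are inherited verbatim, because the subdiagram of $\ddD_{n+1}$ on $\{i+2,j+2\}$ coincides with the subdiagram of $\ddB_n$ on $\{i,j\}$. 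The only relations requiring real work are those linking the special node $0$ (whose images are the products $R_1R_2$ and $E_1E_2$) to the node $1$ across the double bond; here one exploits that $R_1,R_2$ commute, that $E_1,E_2$ commute, and that node $3$ in $\ddD_{n+1}$ is adjacent to both $1$ and $2$, so that the braid- and idempotent-type relations of $\ddB_n$ between nodes $0$ and $1$ emerge from the $\ddD$-relations at the fork $\{1,2,3\}$. Finally, because $\sigma$ interchanges the indices $1$ and $2$ and fixes all others, each image $R_1R_2$, $E_1E_2$, $R_{i+2}$, $E_{i+2}$ is $\sigma$-invariant; hence the image of $\phi$ lies in $\SBr(\ddD_{n+1})$.

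Next I would show the images generate the whole symmetric subalgebra. Working with the diagram basis of $\Br(\ddD_{n+1})$ supplied by the cited structure theory, I note that $\sigma$ permutes this basis, so every monomial fixed by $\sigma$ reduces to a $\sigma$-fixed basis diagram, and these fixed diagrams span $\SBr(\ddD_{n+1})$. The task is then to express each such fixed diagram as a product of $R_1R_2,E_1E_2$ and the $R_{i+2},E_{i+2}$. The governing principle is that in a reduced word representing a $\sigma$-fixed element, occurrences of the generators $R_1$ and $R_2$ (and of $E_1,E_2$) can be paired and commuted together into the blocks $R_1R_2$ and $E_1E_2$; carrying this out systematically exhibits every $\sigma$-fixed diagram in the image of $\phi$, so $\phi$ is surjective.

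To finish, I would compute both ranks and invoke a counting argument. On the $\ddD_{n+1}$ side, since the $\sigma$-fixed diagrams form a basis of $\SBr(\ddD_{n+1})$, I would count them combinatorially and verify the total equals $f(n)$. On the $\ddB_n$ side, I would develop a normal form for monomials in the $r_i,e_i$, reducing an arbitrary product by the defining relations to a canonical family, and bound its cardinality above by $f(n)$. Since $\phi$ is a surjection onto a free $\Z[\delta^{\pm1}]$-module of rank $f(n)$ emanating from a module generated by at most $f(n)$ elements, over the commutative ring $\Z[\delta^{\pm1}]$ the images of the normal-form elements must constitute a basis; the normal-form elements are then linearly independent and $\phi$ is an isomorphism, with both algebras free of rank $f(n)$.

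The crux of the argument is matching the two counts in the final stage. Determining the $\sigma$-fixed diagrams of $\Br(\ddD_{n+1})$ requires a careful analysis of how $\sigma$ acts on the connection patterns underlying the $\ddD$-type diagrams, and independently establishing that the $\ddB_n$ normal form has no more than $f(n)$ members is delicate, because one must avoid assuming the very rank one is trying to prove. The surjectivity-plus-upper-bound strategy is precisely what circumvents this circularity: it replaces a hard direct lower bound on $\Br(\ddB_n)$ by the already-available dimension of the concrete model $\SBr(\ddD_{n+1})$.
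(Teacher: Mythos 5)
Your overall architecture coincides with the paper's: well-definedness of $\phi$ by checking the defining relations on generators (the paper's Corollary \ref{cor:2.6}), surjectivity onto $\SBr(\ddD_{n+1})$ via the diagram model $\BrD(\ddD_{n+1})$ of \cite{CGW2009}, a combinatorial count of the spanning diagrams giving $f(n)$ (Theorem \ref{diagramimage}(iii)), a normal-form upper bound of $f(n)$ for a spanning set of $\Br(\ddB_n)$ (Theorem \ref{rewrittenforms} and Corollary \ref{rankatmost}), and the closing free-module argument, which is verbatim the paper's final step: a surjection from a module spanned by at most $f(n)$ elements onto a free module of rank $f(n)$ forces the spanning set to be a basis and $\phi$ to be injective.

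However, at the one point where you commit to an actual mechanism there is a genuine gap. Your surjectivity step rests on the claim that in a reduced word for a $\sigma$-fixed monomial the occurrences of $R_1$ and $R_2$ (and of $E_1,E_2$) can be paired and commuted together into the blocks $R_1R_2$ and $E_1E_2$. This fails, because $\sigma$-fixedness imposes no such symmetry on words: every monomial whose diagram carries the coefficient $\theta$ is automatically $\sigma$-fixed, since $\theta$ absorbs decorations ($\xi\theta=\delta\theta$, $\theta^2=\delta^2\theta$), and the underlying strand configuration at the distinguished points $1$ and $\hat 1$ can be completely asymmetric. These are exactly the paper's classes $M^{(3)},\ldots,M^{(6)}$ in the proof of Theorem \ref{diagramimage} --- for example, a $\theta$-multiple of a diagram with a horizontal strand from $1$ at the top but a vertical strand at $\hat 1$. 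Such elements do lie in the image of $\phi$ (surjectivity is true), but no pairing-and-commuting procedure applied to an arbitrary reduced word will exhibit this; one would at minimum need a Matsumoto-type rewriting theory for the Brauer monoid, which is not available. The paper instead argues by induction on $n$ inside the diagram algebra: it introduces the auxiliary elements $K_i=\psi\phi(r_{\epsilon_i})$ (to strip decorations from strands) and $E_{i,j}=\psi\phi(e_{\alpha_j+\alpha_i})$, normalizes the strands at $1,\hat 1$ by multiplying on both sides with elements of $\psi\phi\left<r_i\right>_{i=1}^{n-1}\cong W(\ddA_{n-1})$ into one of the model configurations $\psi\phi(e_0)$, $\psi\phi(e_0e_1)$, $\psi\phi(e_1e_0)$, $\psi\phi(e_2r_1e_0e_1e_2)$, and bottoms out at the hand-checked cases $n=2,3$. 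Relatedly, your phrase ``$\sigma$-fixed basis diagram'' understates the issue: identifying which basis elements of $\BrD(\ddD_{n+1})$ arise from $\SBrM(\ddD_{n+1})$, and showing they are precisely $\BrMD(\ddB_n)$ together with the $\theta$-part, is the content of parts (i) and (ii) of Theorem \ref{diagramimage}, not a formal consequence of $\sigma$ permuting a basis. Finally, note that the $\ddB_n$-side upper bound you defer to ``a normal form'' is the bulk of the paper (all of Section \ref{upperbound}: the elements $f_t^{(i)}$, the factorizations $N_t^{(i)}=A_t^{(i)}W_t^{(i)}$, closure of $M$ under left multiplication by each $e_\beta$, and the identity $\sum_t\bigl(\binom{k}{2t}t!!\bigr)^2(k-2t)!=k!!$); as a plan this matches the paper, but it is where most of the work actually lies.
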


\np In Theorem \ref{th:cellular} of this paper, we also show that these
algebras are cellular in the sense of Graham and Lehrer \cite{GL1996}.  The
subalgebra of $\Br(\ddB_n)$ generated by $r_0,\ldots,r_{n-1}$ is easily seen
to be isomorphic to the group algebra of the Weyl group $W(\ddB_n)$ of type
$\ddB_n$.

Here we give the first few values of ranks of $\Br(\ddB_{n})$.

\np
\begin{center}
\begin{tabular}{|c|c|c|c|c|c|}
     \hline
   $n$& 1&2 & 3 & 4 & 5 \\
   \hline
  $f(n)$&3&25 & 273 & 3801 & 66315 \\
  \hline
\end{tabular}
\end{center}

\np The case $n=2$ is discussed in \cite{CLY2010}, as $\ddB_2$ and $\ddC_2$
represent the same diagram.  Here we illustrate our results with the next
interesting case: $n=3$.
Let $$F=\{1, e_2, e_0, e_1e_0, e_0e_1, e_1e_0e_1, e_0e_2,
e_2r_1e_0e_1e_2\}.$$ We have the following decomposition of $\Br(\ddB_3)$
into $\Z[\delta^{\pm 1}]W(\ddB_3)$-submodules, where $W(\ddB_3)$ is the
submonoid of $\Br(\ddB_3)$ generated by $r_0$, $r_1$, $r_2$.
$$\Br(\ddB_3)=\bigoplus_{e\in F} \Z[\delta^{\pm 1}]W(\ddB_3)eW(\ddB_3).$$
Besides, the sizes of $W(\ddB_3)eW(\ddB_3)$ are $48$, $144$, $18$,
$18$, $18$, $9$, $9$, $9$ for $e\in F$ in the order they are listed.
This accounts for the rank of  $\Br(\ddB_3)$ being $273$.

The strategy of proof is as follows. The monomials in the canonical
generators of $\Br(\ddD_{n+1})$ are known to correspond to certain Brauer
diagrams with an additional decoration of order two by means of the
isomorphism $\psi:\Br(\ddD_{n+1})\rightarrow \BrD(\ddD_{n+1})$ introduced in
\cite{CGW2009}; here $\BrD(\ddD_{n+1})$ is an algebra linearly spanned by
the decorated classical Brauer diagrams, and the details are described in
Section \ref{sect:defns}.
The proof then consists of showing that the image of $\phi$ is the linear
span of the symmetric diagrams, which is free of rank
$f(n)$, and that $\Br(\ddB_n)$ is linearly spanned by at most $f(n)$
monomials. The latter is carried out by means of rewriting monomials to
normal forms, in such a way that each Brauer diagram corresponds to a unique
normal form.  This process leads to a basis which can be shown to be
cellular.

This paper has five sections.  Section \ref{sect:defns} gives the definition
of $\Br(\ddB_n)$ and some elementary properties of $\Br(\ddB_n)$ in
preparation of Section \ref{upperbound}.  We also recall results of Brauer
algebras of type $\ddD_{n+1}$ and present the surjectivity of the map $\phi$
of Theorem \ref{diagramimage} by combinatorial arguments in this
section. Section \ref{admissiblerootsets} discusses aspects of the root
system of type $\ddB_n$ that are used to identify monomials of $\Br(\ddB_n)$
in $r_0$, $\ldots$, $r_{n-1}$, $e_0$, $\ldots$, $e_{n-1}$; moreover a
pictorial description of some monomial images in $\BrD(\ddD_{n+1})$ is
presented.  In Section \ref{upperbound}, the rewriting of monomials of
$\Br(\ddB_n)$ is established, which leads to an upper bound on the rank of
$\Br(\ddB_n)$ and the main theorem is proved at the end of this section.  In
the last section, we establish that $\Br(\ddB_n)$ is cellular.

The idea of obtaining non-simply laced algebras from simply laced types has
been applied in \cite{Dieck2003} for Temperley-Lieb algebras of type $\ddB$
with generators $e_i$, in \cite{H1999} for the reduced BMW algebras of type
$\ddB$ and in \cite{Gra} for Hecke algebras of type $\ddB$.  In
\cite{ZhiChen}, Z.~Chen defines a Brauer algebra for each pseudo-reflection
group by use of a flat connection.  For types $\ddB$ and $\ddC$, it is
different from our algebra and has some intricate relations with our
algebras to be explained in further research.


\section{Definition and elementary properties} \label{sect:defns}
All our rings and algebras here will be unital (i.e., have an identity element)
and associative.

\begin{defn}\label{0.1}
Let $\Z[\delta^{\pm 1}]$ be the group ring over $\Z$  of the infinite cyclic group with generator $\delta$.
 For $n\in \N$, the \emph{Brauer algebra of type $\ddB_n$ over $\Z[\delta^{\pm 1}]$},
 denoted by $\Br(\ddB_n)$, is the
$\Z[\delta^{\pm 1}]$-algebra generated by $r_0$, $r_1,\dots, r_{n-1}$ and
$e_{0}$, $e_1,\dots, e_{n-1}$ subject to the following relations.

\begin{eqnarray}
r_{i}^{2}&=&1 \qquad \qquad\,\,\,\kern.02em \mbox{for}\,\mbox{any} \ i   \label{0.1.3}
\\
r_ie_i &= & e_ir_i \,=\, e_i \,\,\,\,\,\,\kern.05em \mbox{for}\,\mbox{any}\ i  \label{0.1.4}
\\
e_{i}^{2}&=&\delta e_{i} \qquad \qquad\kern.11em \mbox{for}\ i> 0    \label{0.1.5}
\\
e_{0}^{2}&=&\delta^2 e_{0}             \label{0.1.6}
\\
r_ir_j&=&r_jr_i \qquad \quad \mbox{for}\ i\perp j   \label{0.1.7}
\\
e_ir_j&=&r_je_i \qquad  \quad \kern-.03em \mbox{for}\ i\perp j     \label{0.1.8}
\\
e_ie_j&=&e_je_i \qquad \quad \kern-.06em \mbox{for}\ i\perp j      \label{0.1.9}
\\
r_ir_jr_i&=&r_jr_ir_j \qquad\,\kern-.04em \mbox{for}\ {i\sim j}\, \mbox{with}\ i, j > 0   \label{0.1.10}
\\
r_jr_ie_j&=&e_ie_j \quad \qquad \kern-.11em \mbox{for}\ i\sim j\ \mbox{with}\ i,j> 0               \label{0.1.13}
\\
r_ie_jr_i&=&r_je_ir_j \quad \quad \kern.06em \mbox{for}\ i\sim j\ \mbox{with}\ i, j> 0          \label{0.1.15}
\\
r_{1}r_0r_{1}r_{0}&=&r_0r_{1}r_0r_{1}                                       \label{0.1.11}
 \\
r_{0}r_1e_{0}&=&r_1e_{0}                                \label{0.1.14}
\\
  r_{0}e_1r_{0}e_1&=&e_1e_{0}e_1                                                         \label{0.1.19}
\\
(r_{0}r_1r_{0})e_1&=&e_1(r_{0}r_1r_{0})                                                            \label{0.1.20}
\\
e_{0}r_{1}e_{0}&=&\delta e_{0}                                                \label{0.1.12}
\\
e_{0}e_1e_{0}&=&\delta e_{0}                                                    \label{0.1.16}
\\
e_{0}r_1 r_{0}&=&e_{0}r_{1}                                                          \label{0.1.17}
\\
 e_{0}e_1r_{0}&=&e_{0}e_1                                                            \label{0.1.18}
\end{eqnarray}
Here $i\sim j$ means that $i$ and $j$ are adjacent in the Dynkin diagram
$\ddB_n$, and $\perp $ indicates that they are distinct and non-adjacent.
The submonoid of the multiplicative monoid of $\Br(\ddB_n)$ generated by
$\delta$, $\delta^{-1}$, $\{r_i\}_{i=0}^{n-1}$, and $\{e_i\}_{i=0}^{n-1}$ is
denoted by $\BrM(\ddB_n)$. It is the monoid of monomials in $\Br(\ddB_n)$
and will be called \emph{the Brauer monoid of type} $\ddB_n$.
\end{defn}

\np It is a direct consequence of the definition that the submonoid of
$\BrM(\ddB_n)$ generated by $\{r_i\}_{i=0}^{n-1}$ (the Weyl group
generators) is isomorphic to the Weyl group $W(\ddB_n)$ of type
$\ddB_n$. The algebra $\Br(\ddB_2)$ is isomorphic to $\Br(\ddC_2)$ defined
in \cite{CLY2010}, and the isomorphism is given by exchanging the indices
$0$ and $1$ of the Weyl group generators and of the Temperley-Lieb
generators.  As a consequence, Lemma 4.1 of \cite{CLY2010} applies in the
following sense.

\begin{lemma}In $\Br(\ddB_n)$, the following equalities hold.
\begin{eqnarray}
e_1e_0e_1&=&e_1r_{0}e_1  \label{4.1.1}
\\
r_{0}e_1 e_{0}&=&e_{1}e_{0}     \label{4.1.2}
\\
e_{0}r_{1}r_{0}e_{1}&=&e_{0}e_{1}       \label{4.1.3}
\\
r_1r_{0}e_{1}r_{0}&=&r_{0}e_{1}r_{0}r_1      \label{4.1.4}
\\
e_1r_{0}e_1r_0&=&e_1e_0e_1                \label{4.1.5}
\end{eqnarray}
\end{lemma}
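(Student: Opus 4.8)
The plan is to reduce the claim to the rank-two algebra and import the five identities through the isomorphism $\Br(\ddB_2)\cong\Br(\ddC_2)$ already recorded after Definition \ref{0.1}. First I would observe that each of \eqref{4.1.1}--\eqref{4.1.5} is a word identity involving only the four generators $r_0$, $r_1$, $e_0$, $e_1$, i.e. only the indices in $\{0,1\}$. The subdiagram of $\ddB_n$ induced on $\{0,1\}$ is exactly $\ddB_2$, and reading off Definition \ref{0.1} one sees that every defining relation of $\Br(\ddB_2)$ appears verbatim among those of $\Br(\ddB_n)$: the generic relations \eqref{0.1.3}--\eqref{0.1.6} specialized to $i\in\{0,1\}$, together with the $\{0,1\}$-specific relations \eqref{0.1.11}--\eqref{0.1.18} (the relations \eqref{0.1.7}--\eqref{0.1.15}, requiring $\perp$ or two positive adjacent indices, are vacuous for this pair). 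Hence the assignment fixing each of $r_0,r_1,e_0,e_1$ extends to a $\Z[\delta^{\pm1}]$-algebra homomorphism $\Br(\ddB_2)\to\Br(\ddB_n)$, so any word identity in these generators valid in $\Br(\ddB_2)$ is automatically valid in $\Br(\ddB_n)$.

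Next I would transport along the isomorphism $\Br(\ddB_2)\cong\Br(\ddC_2)$ of \cite{CLY2010}, which swaps the indices $0$ and $1$ on both the $r$- and the $e$-generators. Under this swap the five target equalities become exactly the content of Lemma 4.1 of \cite{CLY2010}: for instance \eqref{4.1.1} corresponds to $e_0e_1e_0=e_0r_1e_0$ and \eqref{4.1.5} to $e_0r_1e_0r_1=e_0e_1e_0$, with \eqref{4.1.2}--\eqref{4.1.4} translating similarly. Thus once the homomorphism of the previous paragraph is in place, the lemma is just the image of Lemma 4.1 of \cite{CLY2010} under the isomorphism, and nothing further is needed.

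The one place requiring care — and what I regard as the main, if mild, obstacle — is the bookkeeping guaranteeing that $\Br(\ddB_2)\to\Br(\ddB_n)$ is well defined, i.e. that no defining relation of $\Br(\ddB_2)$ fails to hold in $\Br(\ddB_n)$. Because $\ddB_2$ is precisely the induced diagram on $\{0,1\}$ and the relations of Definition \ref{0.1} are organized by the induced diagram on each pair of indices, this is a routine comparison; but it is the step that legitimizes passing to the rank-two algebra and hence to \cite{CLY2010}.

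Should a self-contained derivation be preferred, I would instead prove \eqref{4.1.1}--\eqref{4.1.5} directly inside $\Br(\ddB_n)$ from Definition \ref{0.1}. The main engine is \eqref{0.1.19}, namely $r_0e_1r_0e_1=e_1e_0e_1$, used alongside the idempotent relations \eqref{0.1.5}--\eqref{0.1.6}, the order-four relation \eqref{0.1.11}, the commutation \eqref{0.1.20}, and the absorption relations \eqref{0.1.14} and \eqref{0.1.17}--\eqref{0.1.18}. For example, \eqref{4.1.4} is equivalent, after right-multiplication by $r_1$ and use of $r_1^2=1$, to the assertion that $r_1$ centralizes $r_0e_1r_0$. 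In this route the obstacle is purely the combinatorial juggling of noncommuting relations, so I would favour the transport argument above as the cleaner path.
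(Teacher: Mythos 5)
Your proposal is correct and follows essentially the same route as the paper, which likewise obtains these five identities by observing that the defining relations of $\Br(\ddB_2)$ hold verbatim among $r_0,r_1,e_0,e_1$ in $\Br(\ddB_n)$ and then transporting Lemma 4.1 of \cite{CLY2010} through the isomorphism $\Br(\ddB_2)\cong\Br(\ddC_2)$ given by swapping the indices $0$ and $1$. Your only addition is to spell out the bookkeeping that makes the homomorphism $\Br(\ddB_2)\to\Br(\ddB_n)$ well defined, a check the paper leaves implicit.
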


\np We recall from \cite{CFW2008} the definition of a Brauer algebra of
simply laced Coxeter type $Q$.  In order to avoid confusion with the above
generators, the symbols of \cite{CFW2008} for the generators of $\Br(Q)$
have been capitalized.

\begin{defn}\label{1.1}
Let $Q$
be a simply laced Coxeter graph. The \emph{Brauer algebra of type $Q$ over
$R$ with loop parameter $\delta$}, denoted $\Br(Q)$, is the algebra
over $\Z[\delta^{\pm 1}]$ generated by $R_i$ and $E_i$, for each node $i$ of $Q$
subject to the following relations, where $\sim$ denotes
adjacency between nodes of $Q$ and $\perp$ non-adjacency for distinct nodes.

\begin{eqnarray}
R_{i}^{2}&=&1          \label{1.1.2} \\
E_{i}^{2}&=&\delta E_{i}   \label{1.1.4} \\
R_iE_i&=&E_iR_i \,=\, E_i     \label{1.1.3} \\
R_iR_j&=&R_jR_i \,\,\qquad\qquad\kern.05em \mbox{for}\, \it{i\perp j} \label{1.1.5} \\
E_iR_j&=&R_jE_i\,\,\qquad\qquad \kern.05em\mbox{for}\, \it{i\perp j}  \label{1.1.6} \\
E_iE_j&=&E_jE_i\,\, \qquad\qquad\kern.05em\mbox{for}\, \it{i\perp j}    \label{1.1.7} \\
R_iR_jR_i&=&R_jR_iR_j \,\,\quad\qquad\kern.05em \mbox{for}\, \it{i\sim j}  \label{1.1.8} \\
R_jR_iE_j&=&E_iE_j\,\,\qquad\qquad\kern.05em \mbox{for}\, \it{i\sim j}       \label{1.1.9} \\
R_iE_jR_i&=&R_jE_iR_j\,\,\quad\qquad\kern.05em \mbox{for}\, \it{i\sim j}     \label{1.1.10}
\end{eqnarray}
As before, we call $\Br(Q)$
\emph{the Brauer algebra of type $Q$}
and denote by $\BrM(Q)$
the submonoid of the multiplicative monoid of $\Br(Q)$ generated by
all $R_i$ and $E_i$, $\delta$, and $\delta^{-1}$.
 \end{defn}

\np
For each $Q$, the algebra $\Br(Q)$ is free over $\Z[\delta^{\pm 1}]$.
The classical Brauer algebra on $m+1$ strands arises when $Q=\ddA_m$.

\begin{rem}\label{additionalrelations}
It is straightforward to show that
the following relations hold in $\Br(Q)$ for all nodes $i$, $j$, $k$ with
$i\sim j \sim k$ and $i\perp k$.
\begin{eqnarray}
E_iR_jR_i&=&E_iE_j \label{3.1.1} \\ R_jE_iE_j &=& R_i E_j \label{3.1.2} \\
E_iR_jE_i &=& E_i \label{3.1.3} \\ E_jE_iR_j &=& E_j R_i \label{3.1.4} \\
E_iE_jE_i &=& E_i \label{3.1.5} \\ E_jE_iR_k E_j &=& E_jR_iE_k E_j
\label{3.1.6} \\ E_jR_iR_k E_j &=& E_jE_iE_k E_j \label{3.1.7}
\end{eqnarray}
\end{rem}

\np As in \cite[Remark 3.5]{CLY2010}, there is a natural anti-involution on
$\Br(\ddB_n)$. This anti-involution is denoted by the superscript
$\scriptstyle op$, so the map is denoted by $x\mapsto x^{\op}$ for any $x\in
\Br(\ddB_n)$.

\begin{prop} \label{prop:opp}
The identity map on $\{\delta, r_i, e_i \mid i = 0,\ldots,n-1\}$ extends to
the anti-involution $x\mapsto x^{\op}$ on the Brauer algebra $\Br(\ddB_n)$.
\end{prop}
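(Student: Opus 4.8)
The plan is to realize $\op$ as the descent to $\Br(\ddB_n)$ of the unique anti-automorphism of the free $\Z[\delta^{\pm1}]$-algebra on the symbols $r_0,\dots,r_{n-1},e_0,\dots,e_{n-1}$ that fixes each generator and $\delta$. On the free algebra this map $w\mapsto w^{\op}$ is simply word reversal; it is manifestly $\Z[\delta^{\pm1}]$-linear, satisfies $(xy)^{\op}=y^{\op}x^{\op}$, and squares to the identity because it fixes the generators. Thus the only thing to prove is that it passes to the quotient, i.e.\ that the two-sided ideal $I$ generated by the defining relations of Definition \ref{0.1} is stable under word reversal. Since $\op$ is an anti-automorphism, $I^{\op}$ is the ideal generated by the reversed relations, so it suffices to check that for every defining relation $u=v$ the reversed identity $u^{\op}=v^{\op}$ already holds in $\Br(\ddB_n)$.

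I would then run through the relations \eqref{0.1.3}--\eqref{0.1.18} and sort them into three groups. The majority are invariant under reversal as written: \eqref{0.1.3}, \eqref{0.1.4}, \eqref{0.1.5}, \eqref{0.1.6}, \eqref{0.1.7}, \eqref{0.1.8}, \eqref{0.1.9}, \eqref{0.1.10}, \eqref{0.1.15}, \eqref{0.1.11}, \eqref{0.1.20}, \eqref{0.1.12} and \eqref{0.1.16} each map to themselves (possibly with the two sides interchanged), so nothing is required. A second group consists of relations whose reversals are again defining relations or their recorded consequences: reversing \eqref{0.1.14} gives exactly \eqref{0.1.17} and vice versa, reversing \eqref{0.1.18} yields $r_0e_1e_0=e_1e_0$, which is \eqref{4.1.2} of the Lemma, and reversing \eqref{0.1.19} yields $e_1r_0e_1r_0=e_1e_0e_1$, which is \eqref{4.1.5} of the Lemma. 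This is precisely where the identities borrowed from \cite{CLY2010} do the work specific to the interaction of the indices $0$ and $1$.

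The one genuinely separate point, and the step I expect to be the main (if mild) obstacle, is the reversal of \eqref{0.1.13}: for $i\sim j$ with $i,j>0$ one must verify $e_jr_ir_j=e_je_i$, which is not among the listed relations. Here I would use that for indices exceeding $0$ the generators $r_i,e_i$ satisfy exactly the simply-laced relations \eqref{1.1.2}--\eqref{1.1.10}, so the pair $\{i,j\}$ generates a Brauer algebra of type $\ddA_2$; the desired identity is the reverse of \eqref{1.1.9} and holds in any simply-laced $\Br(Q)$, since such algebras are already known to carry the generator-fixing anti-involution (equivalently, it follows directly from \eqref{1.1.2}--\eqref{1.1.10} in the two-node case). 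Having checked all three groups, $I^{\op}=I$, so $\op$ descends to a $\Z[\delta^{\pm1}]$-linear anti-homomorphism of $\Br(\ddB_n)$ fixing the generators; as it is involutive on the generators it is involutive on all of $\Br(\ddB_n)$, which completes the proof.
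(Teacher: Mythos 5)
Your proof is correct and follows essentially the route the paper intends: Proposition \ref{prop:opp} is stated there without proof, with a pointer to \cite[Remark 3.5]{CLY2010}, and the standard justification is exactly your check that the defining ideal is stable under word reversal --- the palindromic relations needing nothing, \eqref{0.1.14} and \eqref{0.1.17} exchanging with one another, \eqref{0.1.18} and \eqref{0.1.19} reversing to \eqref{4.1.2} and \eqref{4.1.5}, and the reversal of \eqref{0.1.13} being \eqref{3.1.1} with the indices interchanged, valid in the simply-laced subalgebra generated by $r_i,e_i$ for $i\geq 1$. Note only that \eqref{4.1.2}, \eqref{4.1.5} (from \cite[Lemma 4.1]{CLY2010}) and \eqref{3.1.1} are obtained by direct computation from the defining relations rather than via $\op$, so your appeal to them introduces no circularity.
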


Since $\Br(\ddB_2)\cong\Br(\ddC_2)$ and $\Br(\ddD_3)\cong \Br(\ddA_3)$, the
following corollary can be verified easily as in \cite{CLY2010}.

\begin{cor} \label{cor:2.6}
The map defined as $\phi$ on the generators of $\Br(\ddB_n)$ in Theorem
\ref{thm:main} extends to a unique algebra homomorphism $\phi$ on
$\Br(\ddB_n)$. Furthermore, the image of $\phi$ is contained in
$\SBr(\ddD_{n+1})$.
\end{cor}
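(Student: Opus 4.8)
\medskip

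The plan is to verify that the assignment $\phi(r_0)=R_1R_2$, $\phi(r_i)=R_{i+2}$, $\phi(e_0)=E_1E_2$, $\phi(e_i)=E_{i+2}$ (for $0<i\le n-1$) respects each of the defining relations \eqref{0.1.3}--\eqref{0.1.18} of $\Br(\ddB_n)$, so that it extends to a well-defined algebra homomorphism by the universal property of a presentation. Since a homomorphism out of a presented algebra is determined by its values on generators, uniqueness is automatic once well-definedness is established. The relations of $\Br(\ddB_n)$ that involve only indices $i,j>0$ translate directly into relations of $\Br(\ddD_{n+1})$ on the shifted indices $i+2,j+2$: because the map $i\mapsto i+2$ carries the sub-Dynkin-diagram of $\ddB_n$ on $\{1,\ldots,n-1\}$ (a chain of type $\ddA_{n-1}$) isomorphically onto the sub-diagram of $\ddD_{n+1}$ on $\{3,\ldots,n+1\}$, adjacency and non-adjacency are preserved, and relations \eqref{0.1.3}--\eqref{0.1.5}, \eqref{0.1.7}--\eqref{0.1.10}, \eqref{0.1.13}, \eqref{0.1.15} are immediate consequences of the corresponding Definition \ref{1.1} relations \eqref{1.1.2}--\eqref{1.1.10} for $\Br(\ddD_{n+1})$.

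\medskip

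The substantive checking concerns the relations that involve the index $0$, where $\phi(r_0)=R_1R_2$ and $\phi(e_0)=E_1E_2$ are \emph{products} of two commuting generators: here one must exploit that nodes $1$ and $2$ of $\ddD_{n+1}$ are the two ``forked'' nodes, both adjacent to $3$ but perpendicular to each other. Thus $R_1,R_2$ commute and $E_1,E_2$ commute (relations \eqref{1.1.5}, \eqref{1.1.7}), which gives $\phi(r_0)^2=R_1R_2R_1R_2=R_1^2R_2^2=1$ and $\phi(e_0)^2=E_1E_2E_1E_2=E_1^2E_2^2=\delta^2 E_1E_2=\delta^2\phi(e_0)$, matching \eqref{0.1.3} and the special idempotent relation \eqref{0.1.6}; the factor $\delta^2$ (rather than $\delta$) is exactly what the doubling produces, which is reassuring. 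The commuting relation $r_0 e_0=e_0 r_0=e_0$ of \eqref{0.1.4} follows from $R_1E_1=E_1R_1=E_1$ and $R_2E_2=E_2R_2=E_2$ applied factorwise. The mixed relations \eqref{0.1.11}--\eqref{0.1.18}, which encode the $\ddB_2$-type interaction between node $0$ and node $1$, become identities among $R_1,R_2,R_3,E_1,E_2,E_3$ in $\Br(\ddD_{n+1})$; these I would verify one at a time by repeatedly using commutativity of the $1$- and $2$-indexed generators together with the braid and Brauer relations for the adjacent triple $1\sim 3$, $2\sim 3$, supplemented by the derived identities \eqref{3.1.1}--\eqref{3.1.7} of Remark \ref{additionalrelations}. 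I expect this block to be the main obstacle, as several relations (e.g.\ \eqref{0.1.19}, \eqref{0.1.12}, \eqref{0.1.16}) require genuine manipulation in $\Br(\ddD_{n+1})$ rather than a one-line reduction; the remark that $\Br(\ddB_2)\cong\Br(\ddC_2)$ and $\Br(\ddD_3)\cong\Br(\ddA_3)$ lets me import the $n=2$ computation wholesale via the identifications already cited from \cite{CLY2010}, and for $n>2$ the extra generators $R_4,\ldots,R_{n+1}$ play no role in these index-$\{0,1\}$ relations.

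\medskip

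Finally, for the containment $\phi(\Br(\ddB_n))\subseteq\SBr(\ddD_{n+1})$, it suffices to observe that each generator image is fixed by the diagram automorphism $\sigma$, which swaps the indices $1$ and $2$. Indeed $\sigma(R_1R_2)=R_2R_1=R_1R_2$ and $\sigma(E_1E_2)=E_2E_1=E_1E_2$ (again by commutativity), while $\sigma$ fixes $R_{i+2}$ and $E_{i+2}$ for $i\ge1$ since those indices are $\ge3$ and unaffected by the transposition; as $\sigma$ is an algebra automorphism and the fixed subalgebra $\SBr(\ddD_{n+1})$ is closed under products and $\Z[\delta^{\pm1}]$-linear combinations, any word in the generator images lies in $\SBr(\ddD_{n+1})$, whence so does the entire image of $\phi$.
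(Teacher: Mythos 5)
Your proposal is correct and follows essentially the same route as the paper: the paper likewise proves the first claim by checking the defining relations under $\phi$ (leaning, as you do, on $\Br(\ddB_2)\cong\Br(\ddC_2)$ and $\Br(\ddD_3)\cong\Br(\ddA_3)$ to import the nontrivial index-$\{0,1\}$ computations from \cite{CLY2010}), and the second claim by observing that each generator image is a $\sigma$-fixed monomial, so that products of such monomials remain in $\SBrM(\ddD_{n+1})$ and hence the image of $\phi$ lies in its span $\SBr(\ddD_{n+1})$. Your write-up is in fact more detailed than the paper's two-line proof; the only cosmetic caveat is that $\SBr(\ddD_{n+1})$ is defined as the span of the $\sigma$-fixed monomials rather than the full $\sigma$-fixed subalgebra, but your argument uses exactly the closure property that this span possesses.
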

\begin{proof}  The first claim can be verified  by checking defining relations under $\phi$. The second claim holds for the image of each generator of $\Br(\ddB_n)$
under $\phi$ is in  $\SBr(\ddD_{n+1})$.
\end{proof}
\np Since the subalgebra in $\Br(\ddD_{n+1})$ generated by $\{R_i,
E_i\}_{i=3}^{n+1}$ is isomorphic to $\Br(\ddA_{n-1})$, which can be found in
\cite{CGW2009}, or $\Br(\ddB_{n})/(e_0, r_0-1)\cong \Br(\ddA_{n-1})$, the
proposition below holds naturally.

\begin{prop} The subalgebra  generated by $\{r_i,\,e_i\}_{i=1}^{n-1}$ and $\delta$ in
$\Br(\ddB_n)$ is isomorphic to $\Br(\ddA_{n-1})$.
\end{prop}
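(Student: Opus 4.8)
The plan is to realize the claimed isomorphism as the obvious map on generators, obtain surjectivity by matching relations, and deduce injectivity by transporting everything through the isomorphism $\phi$ of Theorem~\ref{thm:main}. Concretely, labelling the nodes of $\ddA_{n-1}$ by $1,\dots,n-1$, I would first set up a candidate $\Z[\delta^{\pm1}]$-algebra homomorphism $\alpha\colon\Br(\ddA_{n-1})\to\Br(\ddB_n)$ by $R_i\mapsto r_i$ and $E_i\mapsto e_i$. To see that it is well defined I would check that these images satisfy the defining relations \eqref{1.1.2}--\eqref{1.1.10}: each of these involves only indices $\ge1$, and for such indices it is literally one of the relations \eqref{0.1.3}, \eqref{0.1.4}, \eqref{0.1.5}, \eqref{0.1.7}, \eqref{0.1.8}, \eqref{0.1.9}, \eqref{0.1.10}, \eqref{0.1.13}, \eqref{0.1.15} of Definition~\ref{0.1} (the side conditions ``$i,j>0$'' there being automatic), while the relations peculiar to type $\ddB_n$ all involve $r_0$ or $e_0$ and impose nothing on the subalgebra. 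By construction the image of $\alpha$ is exactly the subalgebra generated by $\{r_i,e_i\}_{i=1}^{n-1}$, so $\alpha$ is surjective onto it.

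For injectivity I would compose with $\phi$. By Theorem~\ref{thm:main}, $\phi(r_i)=R_{i+2}$ and $\phi(e_i)=E_{i+2}$ for $1\le i\le n-1$, so $\phi\circ\alpha\colon\Br(\ddA_{n-1})\to\Br(\ddD_{n+1})$ is precisely the map $R_i\mapsto R_{i+2}$, $E_i\mapsto E_{i+2}$, with image the subalgebra generated by $\{R_j,E_j\}_{j=3}^{n+1}$. Since the nodes $3,\dots,n+1$ span a subdiagram of $\ddD_{n+1}$ of type $\ddA_{n-1}$, the identification of this standard parabolic subalgebra with $\Br(\ddA_{n-1})$ recalled just above (from \cite{CGW2009}) tells us that this map is an isomorphism onto its image, in particular injective. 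As a left factor of the injective map $\phi\circ\alpha$, the map $\alpha$ is itself injective; together with the previous step this makes $\alpha$ an isomorphism onto the subalgebra, as required.

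The substantive ingredient, and the step I would be most careful about, is the cited parabolic identification in $\Br(\ddD_{n+1})$: one needs not merely that $\{R_j,E_j\}_{j=3}^{n+1}$ generate a copy of $\Br(\ddA_{n-1})$, but that the natural map realizing this is injective, so that $\phi\circ\alpha$ really is the embedding rather than just a surjection onto the parabolic. Granting that, the rest is formal. I would base the argument on this $\ddD_{n+1}$ route rather than on the alternative quotient $\Br(\ddB_n)/(e_0,r_0-1)$ mentioned above, since it plugs directly into Theorem~\ref{thm:main}; the quotient route would in addition force one to reconcile relation \eqref{0.1.19} with the substitutions $r_0=1$ and $e_0=0$, which is precisely where the double bond and the loop parameter $\delta$ interact and where extra care is needed.
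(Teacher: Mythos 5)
Your proposal is correct and takes essentially the same route as the paper, which justifies the proposition in one line by exactly your main ingredient: the identification from \cite{CGW2009} of the subalgebra of $\Br(\ddD_{n+1})$ generated by $\{R_i,E_i\}_{i=3}^{n+1}$ with $\Br(\ddA_{n-1})$, composed with $\phi$ (the paper also mentions, as an alternative, the quotient $\Br(\ddB_n)/(e_0,r_0-1)\cong\Br(\ddA_{n-1})$ that you deliberately avoid). One small precision: since the isomorphism statement of Theorem \ref{thm:main} is only proved at the end of Section \ref{upperbound}, you should invoke Corollary \ref{cor:2.6} for the existence of the homomorphism $\phi$ rather than Theorem \ref{thm:main} itself --- your argument in fact uses only the homomorphism property of $\phi$ together with injectivity of the parabolic embedding, so this is purely a matter of citation and no circularity arises.
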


\np Hence the formulas in Remark \ref{additionalrelations} still hold for
lower letters with nonzero indices.  The next two lemmas contain formulas
that will be applied in Section \ref{upperbound}.

For $ 2\le i\le n-1$, set $e_1^*=r_0e_1r_0$ and
$e_i^*=r_{i-1}r_ie_{i-1}^*r_ir_{i-1}$.

\begin{lemma}\label{ee*} For $i\in \{1,\ldots, n-1\}$,
\begin{eqnarray}
e_ie_i^{*}&=&e_ie_{i-1}\cdots e_1e_0e_1\cdots
e_{i-1}e_{i},\label{eieistar} 
\\
                r_0e_ie_{i}^*&=&e_ie_i^*.\label{r0eieistar}
\end{eqnarray}
\end{lemma}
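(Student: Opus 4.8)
The plan is to prove the two identities separately, establishing \eqref{eieistar} first by induction on $i$ and then deducing \eqref{r0eieistar} from it with essentially no extra work. Throughout I would abbreviate the right-hand side of \eqref{eieistar} by writing $w_i = e_ie_{i-1}\cdots e_1e_0e_1\cdots e_{i-1}e_i$, so that $w_0 = e_0$ and $w_i = e_iw_{i-1}e_i$ for every $i\ge 1$; the target of the first identity is then simply $e_ie_i^* = w_i$. The base case $i=1$ is immediate from the definition $e_1^* = r_0e_1r_0$: one has $e_1e_1^* = e_1r_0e_1r_0$, which equals $e_1e_0e_1 = w_1$ by \eqref{4.1.5}.

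For the inductive step, fix $2\le i\le n-1$ and assume $e_{i-1}e_{i-1}^* = w_{i-1}$. The two manipulations I would use both come from applying the anti-involution $\op$ of Proposition \ref{prop:opp} to relation \eqref{0.1.13}, which gives $e_jr_ir_j = e_je_i$ whenever $i\sim j$ with $i,j>0$. Starting from the definition $e_i^* = r_{i-1}r_ie_{i-1}^*r_ir_{i-1}$, the plan is the chain $e_ie_i^* = (e_ir_{i-1}r_i)\,e_{i-1}^*\,r_ir_{i-1} = e_ie_{i-1}e_{i-1}^*r_ir_{i-1} = e_iw_{i-1}r_ir_{i-1}$, using first $e_ir_{i-1}r_i = e_ie_{i-1}$ and then the induction hypothesis. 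Writing $w_{i-1} = e_{i-1}w_{i-2}e_{i-1}$ and applying the companion relation $e_{i-1}r_ir_{i-1} = e_{i-1}e_i$ to the trailing block then yields $e_ie_{i-1}w_{i-2}e_{i-1}e_i = e_iw_{i-1}e_i = w_i$, closing the induction. The only thing to watch is that these two instances of $e_jr_ir_j=e_je_i$ are applied at the correct (outer) ends of the word, with the inner block $w_{i-2}$ playing no role.

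Once \eqref{eieistar} is in hand, I would obtain \eqref{r0eieistar} directly, without a second induction. Since node $0$ is adjacent only to node $1$ in $\ddB_n$, relation \eqref{0.1.8} lets $r_0$ commute past each of $e_i,e_{i-1},\ldots,e_2$ appearing in $w_i$, so that $r_0e_ie_i^* = r_0w_i = e_i\cdots e_2\,(r_0e_1e_0)\,e_1e_2\cdots e_i$. Relation \eqref{4.1.2} gives $r_0e_1e_0 = e_1e_0$, and reassembling the word returns exactly $w_i = e_ie_i^*$, as required.

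I do not anticipate a genuine obstacle here: every step is a single application of a relation already recorded in Definition \ref{0.1}, the preceding Lemma, or Remark \ref{additionalrelations} (valid for lower-case letters with nonzero indices by the Proposition identifying the $\{r_i,e_i\}_{i=1}^{n-1}$-subalgebra with $\Br(\ddA_{n-1})$). The one point demanding care is purely bookkeeping — correctly tracking the nested structure $w_i = e_iw_{i-1}e_i$ through the inductive step so that the two symmetric applications of $e_jr_ir_j = e_je_i$ land on the outermost generators.
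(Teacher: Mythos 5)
Your proof is correct and takes essentially the same approach as the paper's: induction on $i$ with base case via (\ref{4.1.5}), the inductive step driven by (\ref{0.1.13}) and its opposite form (\ref{3.1.1}) applied at the two outer ends of the word, and (\ref{r0eieistar}) deduced from (\ref{eieistar}) using (\ref{0.1.8}) and (\ref{4.1.2}). The only cosmetic difference is that the paper first rewrites $e_i$ as $r_{i-1}r_ie_{i-1}r_ir_{i-1}$ and cancels the middle factors, whereas you leave $e_i$ untouched and absorb the conjugating factors of $e_{i-1}^*$ directly.
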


\begin{proof} For $i=1$, we have $e_1e_1^{*}=e_1r_0e_1r_0\overset{(\ref{4.1.5})}{=}e_1e_0e_1.$
For $i>1$ induction, (\ref{0.1.15}), and (\ref{0.1.3})
give
$r_{i-1}r_{i}e_{i-1}r_ir_{i-1}=e_i$,
so
\begin{eqnarray*}
e_ie_i^*&=&r_{i-1}r_ie_{i-1}r_{i}r_{i-1} r_{i-1}r_{i}e_{i-1}^*r_{i}r_{i-1}
\overset{(\ref{0.1.3})}{=}r_{i-1}r_ie_{i-1}e_{i-1}^*r_{i}r_{i-1}\\
  &=&(r_{i-1}r_ie_{i-1})\cdots e_1e_0e_1\cdots  (e_{i-1}r_{i}r_{i-1})\\
         &\overset{(\ref{0.1.13})+(\ref{3.1.1})}{=}& e_ie_{i-1}\cdots e_1e_0e_1\cdots e_{i-1}e_{i}.
\end{eqnarray*}
This establishes (\ref{eieistar}).  Equality (\ref{r0eieistar})
follows from (\ref{eieistar}), (\ref{0.1.8}), and (\ref{4.1.2}).
\end{proof}

\np
Put
\begin{eqnarray}
\label{eq:g}
g&=&e_2r_1e_0e_1e_2.
\end{eqnarray}

\begin{lemma}\label{e2r1e0e1e2}
For $n\ge 3$,  the following equations hold in $\Br(\ddB_n)$,
\begin{eqnarray} g&=&g^{\op}\label{g=gop},\\
(r_1r_0r_1)e_2r_1e_0e_1&=&e_2r_1e_0e_1,\label{g=gop1}\\
(r_1r_0r_1)g&=&g,\label{g=gop2}\\
e_0g&=& \delta e_0e_2. \label{errreree3}
\end{eqnarray}
For $n\ge 4$, the following equations hold in $\Br(\ddB_n)$,
\begin{eqnarray}
(r_3r_2r_1r_0r_1r_2r_3)g&=&g,\label{g=gop3}\\
 e_0r_1r_2r_3g&=& \delta e_0e_1e_3r_2r_3, \label{errreree1}\\
 e_0r_1g&=& \delta e_0r_2r_1e_2, \label{errreree2}\\
 e_1r_2r_3g&=&e_1e_0r_1r_2r_3e_2.\label{errreree4}
 \end{eqnarray}
 \end{lemma}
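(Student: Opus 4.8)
The plan is to prove the eight identities by rewriting monomials, using the defining relations of Definition~\ref{0.1}, the consequences \eqref{4.1.1}--\eqref{4.1.5}, and the sub-$\ddA_{n-1}$ relations of Remark~\ref{additionalrelations} (which hold for the lowercase generators of nonzero index). I would dispatch \eqref{g=gop}--\eqref{errreree3} first, since these involve only the indices $0,1,2$, and then \eqref{g=gop3}--\eqref{errreree4}, which bring in index $3$. For the symmetry \eqref{g=gop} I would avoid comparing the two sides term by term and instead reduce each to a common word: using $e_1e_2=r_2r_1e_2$ from \eqref{0.1.13} together with the commutation $e_0r_2=r_2e_0$ from \eqref{0.1.8} (as $0\perp2$), the word $g=e_2r_1e_0e_1e_2$ becomes $e_2r_1r_2e_0r_1e_2$; applying the anti-involution of Proposition~\ref{prop:opp} to $e_1e_2=r_2r_1e_2$ gives $e_2e_1=e_2r_1r_2$, and substituting this into $g^{\op}=e_2e_1e_0r_1e_2$ produces the identical word.

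The identity \eqref{g=gop1} is the technical core of the case $n\ge3$. I would rewrite $r_1r_0r_1\,e_2r_1e_0e_1$ by applying $r_1e_2r_1=r_2e_1r_2$ from \eqref{0.1.15}, shuttling the freed $r_2$'s past $e_0$ and $r_0$ with the commutations \eqref{0.1.7} and \eqref{0.1.8}, collapsing the $\{0,1\}$-core via $r_0e_1e_0=e_1e_0$ from \eqref{4.1.2}, and finally undoing \eqref{0.1.15} together with $r_1^2=1$ to recover $e_2r_1e_0e_1$. Then \eqref{g=gop2} is immediate, since $g=(e_2r_1e_0e_1)e_2$ and $r_1r_0r_1$ fixes the left factor. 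For \eqref{errreree3} I would commute $e_0$ past $e_2$, apply $e_0r_1e_0=\delta e_0$ from \eqref{0.1.12}, commute back, and finish with $e_2e_1e_2=e_2$ from \eqref{3.1.5}.

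For $n\ge4$ the organizing observation is $r_2r_3e_2=e_3r_2r_3$ (from \eqref{0.1.15} with indices $2,3$), which I would use once to compute $g':=r_2r_3g=e_3r_2r_1e_0e_1r_3e_2$ and then reuse throughout. Left-multiplying \eqref{g=gop3} by $r_2r_3$ shows it is equivalent to $(r_1r_0r_1)g'=g'$; since $r_1r_0r_1$ commutes with $e_3$, this reduces to the core identity $(r_1r_0r_1)r_2r_1e_0e_1=r_2r_1e_0e_1$, which I would verify using the braid relation $r_1r_2r_1=r_2r_1r_2$ of \eqref{0.1.10}, the commutation $r_0r_2=r_2r_0$, and $r_0r_1e_0=r_1e_0$ from \eqref{0.1.14}. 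The remaining \eqref{errreree1}, \eqref{errreree2}, \eqref{errreree4} then follow by left-multiplying $g$ or $r_1r_2r_3g$ by $e_0$ or $e_1$ and collapsing: \eqref{errreree2} through $e_0e_1e_0=\delta e_0$ of \eqref{0.1.16} and then \eqref{0.1.13}; \eqref{errreree1} through $e_0r_1e_0=\delta e_0$, $e_3r_3=e_3$, and the $\op$-form $e_3e_2=e_3r_2r_3$ of \eqref{0.1.13}; and \eqref{errreree4} through the sub-$\ddA$ relation $e_1r_2r_1=e_1e_2$ of \eqref{3.1.1} followed by $e_3e_2e_3=e_3$ from \eqref{3.1.5}.

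I expect the main difficulty to be bookkeeping rather than anything conceptual. Each identity is a competition between the commutation relations, which push the index-$3$ (and index-$2$) generators to the outside of the word, and the small set of collapsing relations \eqref{0.1.12}, \eqref{0.1.16}, \eqref{4.1.2}, \eqref{0.1.14} that act only on the $\{0,1,2\}$-core; the delicate point is to apply $r_1e_2r_1=r_2e_1r_2$ and $r_0r_1e_0=r_1e_0$ at precisely the step that exposes the core for collapse. Reducing $g=g^{\op}$ to a common word, and isolating the single core identity $(r_1r_0r_1)r_2r_1e_0e_1=r_2r_1e_0e_1$ behind \eqref{g=gop3}, are the two structural simplifications that keep these rewriting chains short.
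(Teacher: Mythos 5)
Your proposal is correct and takes essentially the same route as the paper's proof: a direct rewriting argument with the identical toolkit, in which your chains for \eqref{g=gop}, \eqref{g=gop1}, \eqref{g=gop2}, \eqref{errreree3} and \eqref{errreree2} (applying $r_1e_2r_1=r_2e_1r_2$ to expose the $\{0,1\}$-core and collapsing it via \eqref{4.1.2}, \eqref{0.1.12}, \eqref{0.1.16}, \eqref{3.1.5}) reproduce the paper's computations almost step for step. Your small reroutings check out --- reducing \eqref{g=gop3} to the core identity $(r_1r_0r_1)r_2r_1e_0e_1=r_2r_1e_0e_1$ via $g'=r_2r_3g$ instead of the paper's shortcut $r_2r_3g=e_3g$ followed by \eqref{g=gop2}, and closing \eqref{errreree1} with $e_0r_1e_0=\delta e_0$ rather than the paper's $e_0e_1e_0=\delta e_0$ --- with the lone cosmetic imprecision that finishing \eqref{errreree4} also requires an instance of \eqref{3.1.4} (namely $e_2e_1r_2=e_2r_1$) alongside the two relations you name, exactly as in the paper's own chain.
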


\begin{proof} From
\begin{eqnarray*}
g&=&(e_2r_1)e_0e_1e_2\overset{(\ref{0.1.8})}{=} e_2e_1(r_2e_0)e_1e_2
\overset{(\ref{0.1.8})}{=}
       e_2e_1e_0(r_2e_1e_2)\overset{(\ref{3.1.2})}{=}e_2e_1e_0r_1e_2,
\end{eqnarray*}
it follows that $g$ is invariant under  opposition, and so (\ref{g=gop}).
Equality  (\ref{g=gop1}) follows from
\begin{eqnarray*}
r_1r_0(r_1e_2r_1)e_0e_1
&\overset{(\ref{0.1.15})}{=}& r_1(r_0r_2)e_1(r_2e_0)e_1
\overset{(\ref{0.1.8})+(\ref{0.1.9})}{=}r_1r_2(r_0e_1e_0)r_2e_1 \\
&\overset{(\ref{4.1.2})}{=}&   r_1r_2e_1(e_0r_2)e_1 
\overset{(\ref{0.1.9})}{=}(r_1r_2e_1r_2)e_0e_1\overset{(\ref{0.1.15})}{=}e_2r_1e_0e_1.
\end{eqnarray*}
Equality (\ref{g=gop2}) follows from (\ref{g=gop1}) by right multiplication
by $e_2$, and
Equality (\ref{g=gop3}) from
 \begin{eqnarray*}
r_3r_2r_1r_0r_1r_2r_3g&\overset{(\ref{0.1.13})}{=}&r_3r_2(r_1r_0r_1e_3)g
\overset{(\ref{0.1.8})}{=}r_3r_2e_3r_1r_0r_1g\\
&\overset{(\ref{g=gop2})}{=}&r_3r_2e_3g
\overset{(\ref{0.1.13})+(\ref{3.1.5})}{=}g.
\end{eqnarray*}

Formula (\ref{errreree3}) follows from
 \begin{eqnarray*}
 e_0g=(e_0e_2)r_1e_0e_1e_2&\overset{(\ref{0.1.9})}{=}& e_2(e_0r_1e_0)e_1e_2
\overset{(\ref{0.1.12})}{=}\delta e_2e_0e_1e_2\overset{(\ref{0.1.9})+(\ref{3.1.5})}{=}\delta e_0e_2,
 \end{eqnarray*}
Formula (\ref{errreree1})  from
 \begin{eqnarray*}
 e_0r_1r_2r_3g&=&e_0r_1(r_2r_3e_2)r_1e_0e_1e_2
\overset{(\ref{0.1.13})}{=}e_0(r_1e_3)e_2r_1e_0e_1e_2\\
&\overset{(\ref{0.1.8})}{=}&e_0e_3(r_1e_2r_1)e_0e_1e_2
\overset{(\ref{0.1.15})}{=}(e_0e_3r_2)e_1(r_2e_0)e_1e_2\\
&\overset{(\ref{0.1.8})+(\ref{0.1.9})}{=}&e_3r_2(e_0e_1e_0)r_2e_1e_2
\overset{(\ref{0.1.16})}{=}\delta (e_3r_2e_0r_2e_1)e_2\\
&\overset{(\ref{0.1.3})+(\ref{0.1.8})+(\ref{0.1.9})}{=}&
 \delta e_0e_1 e_3e_2
\overset{(\ref{3.1.1})}{=}\delta e_0e_1 e_3r_2r_3.
 \end{eqnarray*}
Formula (\ref{errreree2}) from
 \begin{eqnarray*}
e_0r_1g&=&e_0(r_1e_2r_1)e_0e_1e_2
\overset{(\ref{0.1.15})}{=}(e_0r_2)e_1(r_2e_0)e_1e_2
\overset{(\ref{0.1.8})}{=}r_2(e_0e_1e_0)r_2e_1e_2\\
&\overset{(\ref{0.1.16})}{=}&\delta (r_2e_0r_2)e_1e_2\overset{(\ref{0.1.3})+(\ref{0.1.8})}{=}\delta e_0 (e_1e_2)
\overset{(\ref{0.1.13})}{=} \delta e_0 r_2r_1e_2,
 \end{eqnarray*}
and Formula (\ref{errreree4}) from \begin{eqnarray*}
e_1r_2r_3g&=&e_1(r_2r_3e_2)r_1e_0e_1e_2
\overset{(\ref{0.1.13})}{=} (e_1 e_3)e_2r_1e_0e_1e_2\overset{(\ref{0.1.9})}{=} e_3 (e_1e_2r_1)e_0e_1e_2\\
  &\overset{(\ref{3.1.4})}{=}& (e_3 e_1r_2e_0)e_1e_2\overset{(\ref{0.1.8})+(\ref{0.1.9})}{=}e_1e_0e_3(r_2e_1e_2)
\overset{(\ref{3.1.2})}{=}e_1e_0(e_3r_1)e_2\\
&\overset{(\ref{0.1.8})}{=}&e_1e_0r_1(e_3e_2)\overset{(\ref{0.1.13})}{=}e_1e_0r_1r_2r_3e_2.
 \end{eqnarray*}
 \end{proof}

\np In order to give the diagram interpretation of monomials of
$\Br(\ddB_n)$, we recall the \emph{ Brauer diagram algebra of type}
$\ddD_{n+1}$ from \cite{CGW2009}.  Divide $2n+2$ points into two sets $\{1,
2,\ldots, n+1\}$ and $\{\hat{1}, \hat{2}, \ldots, \widehat{n+1}\}$ of points
in the (real) plane with each set on a horizontal line and point $i$ above
$\hat{i}$. An $n+1$-\emph{connector} is a partition on $2n+2$ points into
$n+1$ disjoint pairs. It is indicated in the plane by a (piecewise linear)
curve, called \emph{strand} from one point of the pair to the other. A
\emph{decorated} $n+1$-\emph{connector} is an $n+1$-connector in which an
even number of pairs are labeled $1$, and all other pairs are labeled by
$0$. A pair labeled $1$ will be called \emph{decorated}. The decoration of a
pair is represented by a black dot on the corresponding strand.  Denote
$T_{n+1}$ the set of all decorated $n+1$-connectors. Denote $T_{n+1}^0$ the
subset of $T_{n+1}$ of decorated $n+1$-connectors without decorations and
denote $T_{n+1}^=$ the subset of $T_{n+1}$ of decorated $n+1$-connectors
with at least one horizontal strand.

Let $H$ be the commutative monoid with presentation
$$H=\left<\delta^{\pm 1}, \xi, \theta\mid \xi^2=\delta^2, \xi
\theta=\delta\theta, \theta^2=\delta^2\theta\right> =\left<\delta^{\pm
1}\right>\{1, \xi, \theta \}.$$ A \emph{Brauer diagram} of type $\ddD_{n+1}$
is the scalar multiple of a decorated $n$-connector by an element of $H$
belonging to $\left<\delta^{\pm 1}\right>(T_{n+1}\cup \xi T_{n+1}^=\cup
\theta (T_{n+1}^{0}\cap T_{n+1}^=))$.  The \emph{Brauer diagram algebra of
type} $\ddD_{n+1}$, denoted $\BrD(\ddD_{n+1})$, is the
$\Z[\delta^{\pm1}]$-linear span of all Brauer diagrams of type $\ddD_{n+1}$
with multiplication laws defined in \cite[Definition 4.4]{CGW2009}. The
scalar $\xi\delta^{-1}$ appears in various products of $n+1$-connectors
described in \cite[Definition 4.4]{CGW2009} and two consecutive black dots
on a strand are removed.  The multiplication is an intricate variation of
the multiplication in classical Brauer diagrams, where the points of the
bottom of one connector are joined to the points of the top of the other
connector, so as to obtain a new connector. In this process, closed strands
appear which are turned into scalars by translating them into elements of
$H$ as indicated in Figure \ref{ch5closedloop}.

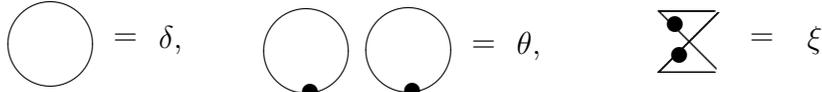
\begin{figure}[htbp]
\unitlength 1mm
\begin{picture}(26,15)(0,0)
\linethickness{0.3mm}
\put(10,9.41){\circle{11.18}}
\put(20,10){\makebox(0,0)[cc]{$=$}}
\put(26,10){\makebox(0,0)[cc]{$\delta$,}}
\end{picture}
$\qquad$
\unitlength 1mm
\begin{picture}(36.59,14.18)(0,0)
\put(30.59,9.18){\makebox(0,0)[cc]{$=$}}
\put(36.59,9.18){\makebox(0,0)[cc]{$\theta$,}}
\linethickness{0.3mm}
\put(20.59,8.59){\circle{11.18}}
\put(21.09,3.18){\circle*{2}}
\put(7,8.5){\circle{11.18}}
\put(7.5,3.09){\circle*{2}}
\end{picture}
$\qquad$
\unitlength 1mm
\begin{picture}(27,15)(0,0)
\linethickness{0.3mm}
\put(8.4,12){\circle*{2}}
\put(9,8){\circle*{2}}
\multiput(6.25,5.62)(0.08,0.08){100}{\line(0,1){0.09}}
\multiput(6.25,13.75)(0.08,-0.08){96}{\line(1,0){0.09}}
\put(20,10){\makebox(0,0)[cc]{$=$}}
\put(27,10){\makebox(0,0)[cc]{$\xi$}}
\multiput(6.25,13.75)(0.08,0.00){94}{\line(0,1){0.09}}
\multiput(6.25,5.62)(0.08,0.00){94}{\line(0,1){0.09}}
\end{picture}
\caption{The closed loops corresponding to the generators of $H$} 
\label{ch5closedloop}
\end{figure}

\begin{figure}[!htb]\label{psi}
\begin{center}
\includegraphics[width=.9\textwidth, height=.26\textheight]{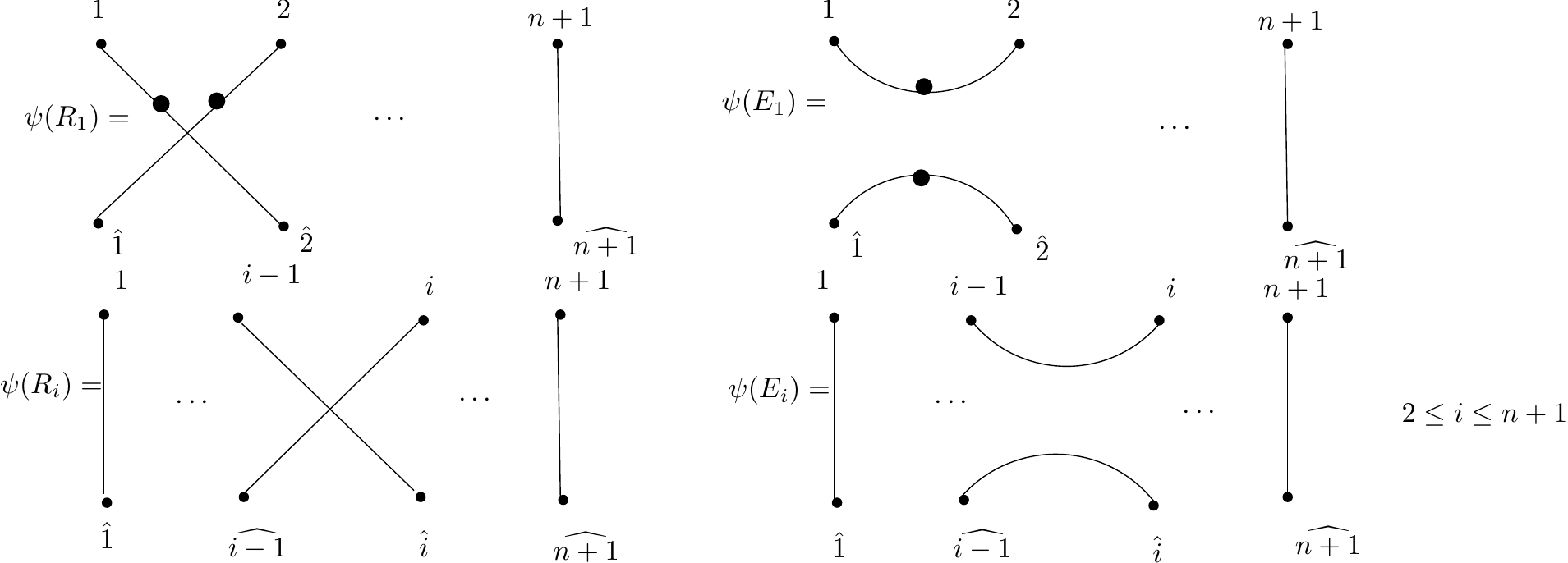}
\end{center}
\caption{The images of the generators of $\Br(\ddD_{n+1})$ under $\psi$}
\label{fig:1}
\end{figure}

In \cite{CGW2009}, the algebra $\BrD(\ddD_{n+1})$ is proved to be isomorphic
to $\Br(\ddD_{n+1})$ by means of the isomorphism
$\psi:\Br(\ddD_{n+1})\mapsto \BrD(\ddD_{n+1}) $ defined on generators as
in Figure \ref{fig:1}.  It is free over $\Z[\delta^{\pm 1}]$ with basis
$T_{n+1}\cup \xi T_{n+1}^=\cup \theta (T_{n+1}^{0}\cap T_{n+1}^=)$.

\begin{Notation}
\rm Write $T^{|}_{n+1}$ for the subset of $T_{n+1}$ consisting of all
$n+1$-connectors with a fixed strand from $1$ to $\hat{1}$, and set
$T^{|=}_{n+1}=T^|_{n+1}\cap T_{n+1}^=$.
It is readily checked that the union of $\delta^{\Z}T^{|}_{n+1}$ ,
$\delta^{\Z}\xi T^{|=}_{n+1}$, and $\delta^{\Z} \theta (T^{=}_{n+1}\cap
T^0_{n+1})$ is a submonoid of $\BrD(\ddD_{n+1})$; we denote it by
$\BrMD(\ddB_n)$ and the corresponding algebra over $\Z[\delta^{\pm 1}]$ by
$\BrD(\ddB_n)$.
\end{Notation}

\np The images of the generators of $\Br(\ddB_n)$ under $\psi\phi$ in
$\BrD(\ddD_{n+1})$ lie in $\BrD(\ddB_n)$; they are indicated in Figure
\ref{fig:5}.

\begin{figure}[!htb]
\begin{center}
\includegraphics[width=.7\textwidth, height=.26\textheight]{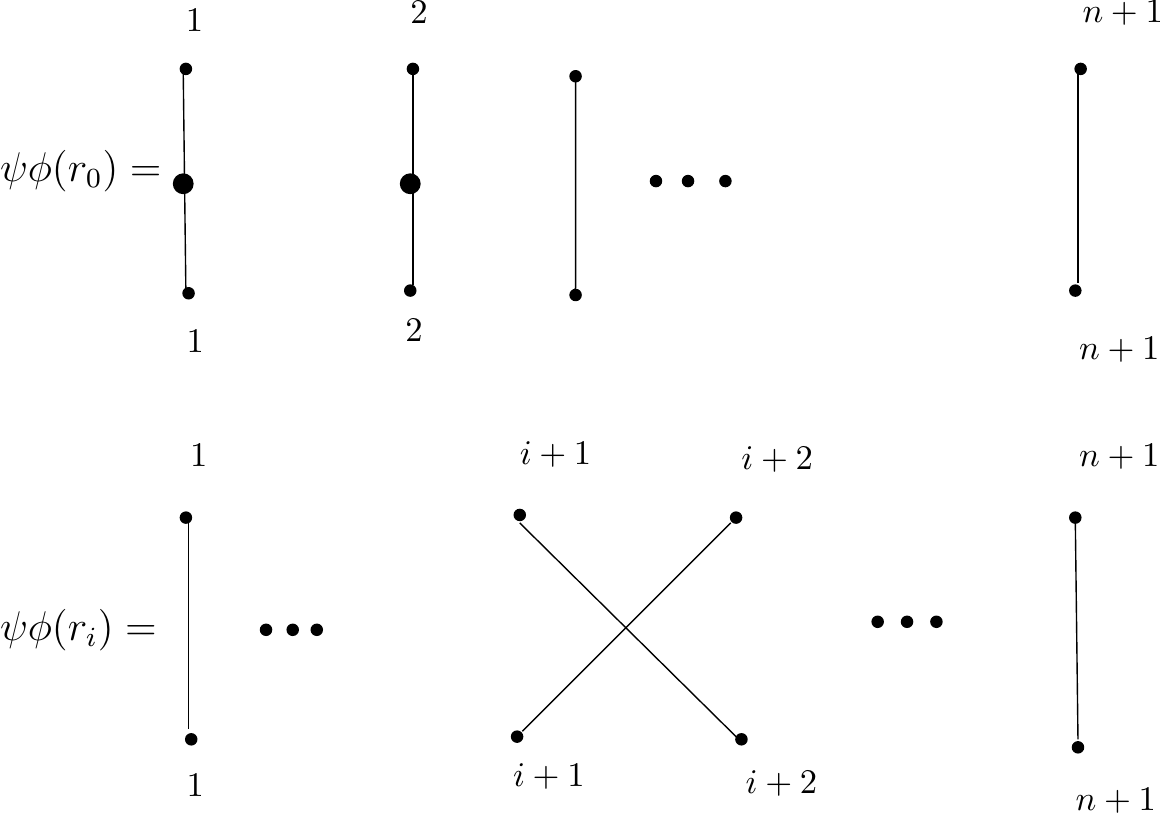}
\end{center}
\label{fig:4}
\end{figure}
\begin{figure}[!htb]
\begin{center}
\includegraphics[width=.7\textwidth, height=.26\textheight]{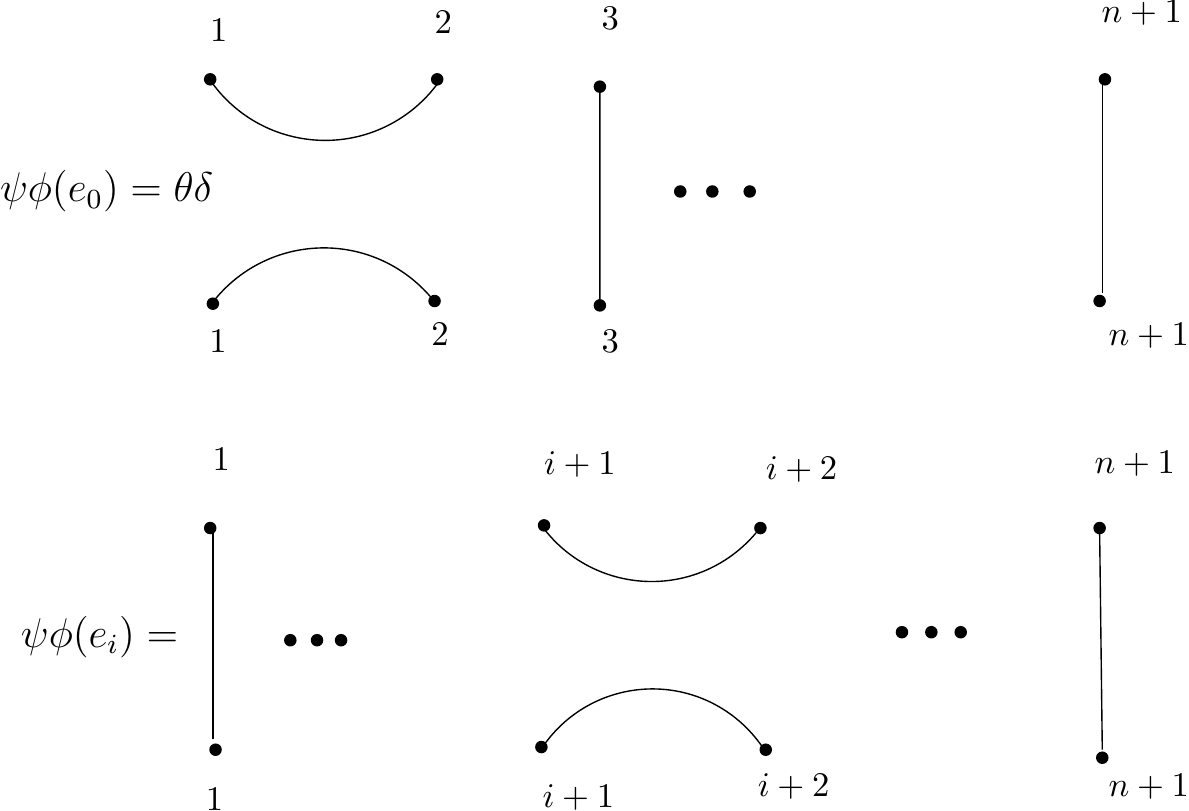}
\end{center}
\caption{The images under $\psi\phi$ of the generators of $\Br(\ddB_n)$}
\label{fig:5}
\end{figure}

\begin{thm} \label{diagramimage}
For $\phi$ and $\psi$, the following holds.
\begin{enumerate}[(i)]
\item The restriction of $\psi$ to $\phi(\Br(\ddB_n))$ is an isomorphism onto $\BrD(\ddB_n)$.
\item The image of $\phi$ coincides with $\SBr(\ddD_{n+1})$.
\item
The $\Z[\delta^{\pm 1}]$-algebras $\SBr(\ddD_{n+1})$ and $\BrD(\ddB_n)$ are
free of rank $f(n)$.
\end{enumerate}
\end{thm}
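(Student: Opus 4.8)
The plan is to transport the whole statement across the isomorphism $\psi\colon\Br(\ddD_{n+1})\to\BrD(\ddD_{n+1})$ and reduce it to a combinatorial analysis of diagrams. Since $\sigma$ is an algebra automorphism of $\Br(\ddD_{n+1})$, the conjugate $\hat\sigma=\psi\sigma\psi^{-1}$ is an automorphism of $\BrD(\ddD_{n+1})$, and from the explicit generator images in Figure \ref{fig:1} one reads off that $\hat\sigma$ acts on a decorated $(n+1)$-connector by toggling the decoration of the strand meeting the top point $1$ and of the strand meeting the bottom point $\hat1$ (the diagrammatic shadow of conjugating a signed permutation by the sign change at the first coordinate; in particular $\hat\sigma$ preserves the parity of the total number of dots, so it is well defined). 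Because $\psi$ restricts to a monoid isomorphism $\BrM(\ddD_{n+1})\to\BrMD(\ddD_{n+1})$ carrying $\sigma$ to $\hat\sigma$, it matches the $\sigma$-fixed monomials bijectively with the $\hat\sigma$-fixed diagrams; hence $\psi(\SBr(\ddD_{n+1}))$ is exactly the $\Z[\delta^{\pm1}]$-span of the $\hat\sigma$-fixed diagrams. Both the isomorphism in (i) and the identification needed for (ii) will follow once this fixed set is pinned down.

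The combinatorial core is to show that the $\hat\sigma$-fixed diagrams are precisely the spanning set $T^{|}_{n+1}\cup\xi T^{|=}_{n+1}\cup\theta\,(T^{0}_{n+1}\cap T^{=}_{n+1})$ of $\BrD(\ddB_n)$. I would split on the behaviour of the first rail. If a diagram carries the vertical strand $1\to\hat1$, then $\hat\sigma$ toggles that one strand twice and alters nothing else, so every such diagram is fixed; this accounts exactly for the family $\delta^{\Z}T^{|}_{n+1}$. If instead the top point $1$ and the bottom point $\hat1$ lie on two distinct strands, $\hat\sigma$ flips two independent decorations, so a through-diagram (no horizontal strand) can never be fixed, while a diagram with a horizontal strand can be fixed because a stray dot is absorbed via the $H$-relations $\xi^2=\delta^2$, $\xi\theta=\delta\theta$, $\theta^2=\delta^2\theta$ together with the cancellation of two consecutive dots. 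Tracking the (even) dot-parity then shows these remaining fixed diagrams are exactly the scalar families $\xi T^{|=}_{n+1}$ and $\theta(T^{0}_{n+1}\cap T^{=}_{n+1})$. This decoration-and-$H$-scalar bookkeeping in the horizontal case is the step I expect to be the main obstacle, since it is where the definition of $\BrD(\ddB_n)$ must be matched on the nose rather than merely up to spanning.

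With the fixed set identified, assertion (i) becomes the claim that $\psi\phi$ maps $\Br(\ddB_n)$ \emph{onto} $\BrD(\ddB_n)$; as $\psi$ is injective on $\Br(\ddD_{n+1})$, its restriction to $\phi(\Br(\ddB_n))$ is automatically an isomorphism onto its image, so only surjectivity onto $\BrMD(\ddB_n)$ has to be proved. For this I would check that the generator images drawn in Figure \ref{fig:5} generate the monoid $\BrMD(\ddB_n)$, arguing by induction on $n$ and on the number of horizontal strands: a general diagram of the three families is reduced to the known generation for its type $\ddA_{n-1}$ and $\ddD_{n+1}$ subdiagrams, with $r_0$ and $e_0$ used to install and remove the decorations on the first rail (Lemmas \ref{ee*} and \ref{e2r1e0e1e2} provide exactly the moves for manipulating dots near rail $1$). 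This yields $\psi\phi(\Br(\ddB_n))=\BrD(\ddB_n)$, i.e.\ (i). Combining it with Corollary \ref{cor:2.6}, that $\phi(\Br(\ddB_n))\subseteq\SBr(\ddD_{n+1})$, and with the identification $\psi(\SBr(\ddD_{n+1}))=\BrD(\ddB_n)$ from the previous paragraph, injectivity of $\psi$ forces $\phi(\Br(\ddB_n))=\SBr(\ddD_{n+1})$, which is (ii).

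Finally, for (iii) it suffices to treat $\BrD(\ddB_n)$, since $\psi$ then transports freeness and rank to $\SBr(\ddD_{n+1})$. Its spanning set $T^{|}_{n+1}\cup\xi T^{|=}_{n+1}\cup\theta(T^{0}_{n+1}\cap T^{=}_{n+1})$ is a subset of the free $\Z[\delta^{\pm1}]$-basis $T_{n+1}\cup\xi T^{=}_{n+1}\cup\theta(T^{0}_{n+1}\cap T^{=}_{n+1})$ of $\BrD(\ddD_{n+1})$ (using $T^{|}_{n+1}\subseteq T_{n+1}$ and $T^{|=}_{n+1}\subseteq T^{=}_{n+1}$), hence is $\Z[\delta^{\pm1}]$-linearly independent and a basis of $\BrD(\ddB_n)$. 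The rank is then the direct enumeration $|T^{|}_{n+1}|+|T^{|=}_{n+1}|+|T^{0}_{n+1}\cap T^{=}_{n+1}|$, which I would compute family by family (decorated connectors with a fixed strand $1\to\hat1$; those additionally possessing a horizontal strand; undecorated connectors with a horizontal strand) and verify equals $f(n)=2^{n+1}\,n!!-2^{n}\,n!+(n+1)!!-(n+1)!$.
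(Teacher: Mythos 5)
Your proposal is correct and shares the paper's overall skeleton --- transporting everything through $\psi$, proving surjectivity of $\psi\phi$ onto $\BrMD(\ddB_n)$ by induction on $n$ with a case split on how the strands at $1$ and $\hat{1}$ behave, and obtaining (iii) by the same three-family count $|T^{|}_{n+1}|+|T^{|=}_{n+1}|+|T^{0}_{n+1}\cap T^{=}_{n+1}|$ against the free basis of $\BrD(\ddD_{n+1})$ --- but your route to (ii) is genuinely different. The paper never describes the conjugated automorphism on diagrams; instead it argues by exclusion: it identifies $\BrMD(\ddD_{n+1})\setminus\BrMD(\ddB_n)$ as the families $\delta^{\Z}M^{(i)}\cup\xi\delta^{\Z}M^{(i)}$ for $i=3,\ldots,6$, and shows none of these meets $\psi(\SBrM(\ddD_{n+1}))$ by multiplying with the auxiliary elements $K_i$ to strip decorations and reducing to the rank $2$ and $3$ cases, thereby reusing the inductive machinery already built for (i). Your alternative --- computing $\hat\sigma=\psi\sigma\psi^{-1}$ explicitly as the decoration toggle on the strand through top point $1$ and the strand through bottom point $\hat{1}$, then reading off which basis elements are fixed --- is more direct and makes (ii) self-contained; it also correctly accounts for diagrams in $T^{|}_{n+1}$ whose rail $1\to\hat{1}$ is itself decorated, since the two toggles cancel on a single strand. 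The price is exactly the step you flag yourself: one must verify that the toggle map is multiplicative through the $H$-scalar reductions of \cite{CGW2009} (interface toggles cancel in pairs when diagrams are concatenated, closed loops passing through the first rail receive two toggles and so contribute the same element of $H$, and $\theta$ absorbs toggled decorations), whereas the paper's exclusion argument sidesteps this bookkeeping entirely. Finally, your treatment of (i) is the same inductive plan as the paper's but considerably thinner: the paper makes the induction concrete via the elements $K_i$ and $E_{i,j}$ and the explicit case analysis $M^{(2)}$--$M^{(6)}$ for the $\theta$-family, and a complete write-up of your sketch would have to supply equivalent detail; this is a matter of elaboration rather than a gap in the idea.
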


\np These assertions imply the commutativity of the following diagram.

\begin{center}
\phantom{longlong}
\xymatrix{
\Br(\ddB_n)\ar[dr]^{\phi}\ar[r]^{}&\SBr(\ddD_{n+1})\ar@{^{(}->}[d]\ar[r]^{}_{\cong}&\BrD(\ddB_n)\ar@{^{(}->}[d]\\
                      &\Br(\ddD_{n+1})\ar[r] ^\psi_{\cong}&\BrD(\ddD_{n+1}) }
\end{center}

\nl
\begin{proof}
(i).  All of the images of the generators $\{r_i,\, e_i\}_{i=0}^{n-1}$ under
$\psi\phi$ are in $\BrD(\ddB_n)$.  Therefore, the assertion is
equivalent to the statement that all elements in $\BrMD(\ddB_n)$ can be
written as products of $\{\psi\phi(r_i),\,\psi\phi (e_i)\}_{i=0}^{n-1}$ up
to some powers of $\delta$. Before we start to verify this fact, we observe
that the two special elements $K_i$ and $E_{i,j}$ in $\BrD(\ddB_n)$ of
Figure \ref{KiEij} are both in $\psi\phi(\Br(\ddB_n))$.

\begin{figure}[!htb]\label{KiEij}
\begin{center}
\includegraphics[width=.9\textwidth, height=.17\textheight]{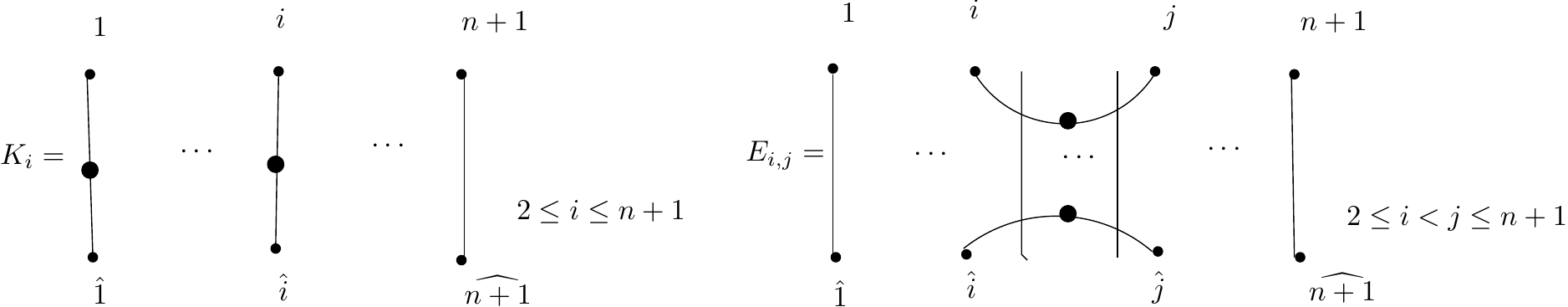}
\end{center}
\caption{$K_i$ and $E_{i,j}$}.
\end{figure}

This can be verified by induction on $i$ and $j$ (alternatively,
they are $\psi\phi(r_{\epsilon_{i}})$ and
$\psi\phi(e_{\alpha_j+\alpha_i})$ in the notation of section
\ref{admissiblerootsets}).

Let $a\in\BrMD(\ddB_n)$. We will show $a\in\psi\phi(\Br(\ddB_n))$.   For
$n=2$, $3$, the requires assertion can be checked by use of diagrams.  We
proceed by induction on $n$ and let $n>3$.

First assume $a\in \delta^{\Z}T^{|}_{n+1}\cup \delta^{\Z}\xi
T^{|=}_{n+1}$. Notice that $a$ has a vertical strand from $1$ to
$\hat{1}$. If $a$ has another vertical strand, say between $i$ and $\hat{j}$
with $1<i,j\leq n+1$, then, by multiplying $a$ by a suitable element of
$\left<\psi\phi(r_i)\right>_{i=1}^{n-1}$ (which is isomorphic to
$W(\ddA_{n-1})$) at the left and by
another element of it at the right, we can move this vertical strand so that it
connects $n+1$ and $\widehat{n+1}$. If it is decorated then we apply
$K_{n+1}$ (an element in the image of $\psi\phi$) to remove the decoration
on this strand; as a consequence, we may restrict ourselves to
$a\in\BrD(\ddB_{n-1})$.  But then induction applies and gives $a\in
\psi\phi(\Br(\ddB_n))$.

If $a$ has only one vertical strand, then the strands from $n+1$ and
$\widehat{n+1}$ are horizontal. As above, we multiply by two suitable
elements of $\left<\psi\phi(r_i)\right>_{i=1}^{n-1}$, which is isomorphic to
$W(\ddA_{n-1})$, to move the top horizontal strand to the strand connecting
$n$ and $n+1$ and the bottom horizontal strand to one connecting $\hat{n}$
and $\widehat{n+1}$. Next we apply $K_{n+1}$ to remove possible decorations
on the two strands. As a result, $a\in \Br(\ddB_{n-2})$ and so $a\in
\psi\phi(\Br(\ddB_n))$ by the induction hypothesis.

The case $a\in \delta^{\Z} \theta (T^{=}_{n+1}\cap T^0_{n+1})$ remains.
We distinguish five possible cases for $a$ by the strands with
ends $1$ and $\hat{1}$.

\begin{itemize}
\item $M^{(2)}$ is the subset of $T^{=}_{n+1}\cap T^0_{n+1}$ of all diagrams
with a fixed
vertical strand between $1$ and $\hat{1}$,
\item $M^{(3)}$ is the subset of $T^{=}_{n+1}\cap T^0_{n+1}$ of all
diagrams with two different horizontal strands with ends $1$ and $\hat{1}$,
\item $M^{(4)}$ is the subset of $T^{=}_{n+1}\cap T^0_{n+1}$ of all
 diagrams
 with two different vertical
strands with ends $1$ and $\hat{1}$, 
\item $M^{(5)}$ is the subset of $T^{=}_{n+1}\cap T^0_{n+1}$ of all diagrams
with a horizontal strand starting from $1$ and a vertical strand from
$\hat{1}$, and
\item  $M^{(6)}$ is the subset of $T^{=}_{n+1}\cap T^0_{n+1}$ of all
  diagrams with a
horizontal strand starting from $\hat{1}$ and vertical strands from $1$.
\end{itemize}

If $a\in \delta^{\Z}\theta M^{(2)}$, the diagram part can be written as a
scalar multiple of the
image of some element $b\in \delta^{\Z}
\psi\phi\left<r_i,\, e_{i}\right>_{i=1}^{n-1}$,
so $a = \delta^k\theta \psi\phi(b)$ for some $k\in\Z$. If there is a
horizontal strand at the top between $i$ and $j$, where $1<i<j\leq n+1$, then
$a=\delta^kE_{i,j}\psi\phi(b)$ for some $k\in \Z$, and we are done as 
$E_{i,j}$ lies in the image of $\psi\phi$.

If $a\in \delta^{\Z}M^{(5)}$ (or $M^{(6)}$, $M^{(4)}$, $M^{(3)}$,
respectively), then, by multiplying by suitable elements in $\psi\phi
\left<r_i\right>_{i=1}^{n-1}$ (which is isomorphic to $W(\ddA_{n-1})$) at
both sides of $a$, we can achieve that the strands between
$\{i,\hat{i}\}_{i=1}^3$ are as in the diagram of $\psi\phi(e_0e_1)$ (or
$\{i,\hat{i}\}_{i=1}^3$ as in $\psi\phi(e_1e_0)$, $\{i,\hat{i}\}_{i=1}^4$ as
in $\psi\phi(e_2r_1e_0e_1e_2)$, $\{i,\hat{i}\}_{i=1}^2$ as in
$\psi\phi(e_0)$, respectively).  Up to the leftmost $3$ or $4$ strands, the
resulting diagram can be considered as an element of $\delta^{\Z}\psi\phi
\left<r_i,\, e_{i}\right>_{i=3}^{n-1}$ (or $\delta^{\Z} \psi\phi\left<r_i,\,
e_{i}\right>_{i=3}^{n-1}$, $\delta^{\Z}\psi\phi\left<r_i,\,
e_{i}\right>_{i=4}^{n-1}$, $\delta^{\Z} \psi\phi\left<r_i,\,
e_{i}\right>_{i=2}^{n-1}$, respectively) which is isomorphic to
$\BrM(\ddA_{j})$ for $j = n-3$ (respectively, $j = n-3,n-4,n-2$). Therefore,
the first claim (i) holds.

\nl(ii).  It follows from (i) that
$$\psi^{-1}(\BrD(\ddB_n))=\phi(\Br(\ddB_n))\subseteq \SBr(\ddD_{n+1}).$$
Therefore, it suffices to prove
$\SBrM(\ddD_{n+1})\subseteq\phi(\BrM(\ddB_n))$, or, equivalently,
$$\left(\BrMD(\ddD_{n+1})\setminus \BrMD(\ddB_n)\right)\cap
\psi(\SBrM(\ddD_{n+1}))=\emptyset.$$ We find that $\BrMD(\ddD_{n+1})\setminus
\BrMD(\ddB_n) $ consists of $\delta^{\Z}M^{(i)}\cup \xi\delta^{\Z} M^{(i)}$,
for $i=3,\ldots, 6$.  By an argument analogous to the above and subsequently
multiplying by $K_i$ to remove decorations on all strands except the
leftmost $3$ or $4$ strands, we can reduce the verification to a case where
$n=2,3$, and so finish by induction.

\nl(iii).  By (i) and (ii), the algebras $\BrD(\ddB_n)$ and
$\SBr(\ddD_{n+1})$ are isomorphic $\Z[\delta^{\pm1}]$-algebras.  The latter
is free (as stated above) and the definition of the former shows that its
rank is $|T^{|}_{n+1}|+|T^{|=}_{n+1}|+|T^{=}_{n}\cap T^0_{n+1}|$.  A simple
counting argument gives $|T^{|}_{n+1}|=2^{n}\cdot n!!$,
$|T^{|=}_{n+1}|=2^{n}\cdot n!!-n!$, and $|T^{=}_{n}\cap
T^0_{n+1}|=(n+1)!!-(n+1)!$.  We conclude that the rank of $\BrD(\ddB_n)$ is
equal to $f(n)$.
\end{proof}

\np
For $n=3$, the eight Brauer diagrams in $\psi\phi(F)$ of
$\BrD(\ddD_4)$ (\cite[Section 4]{CGW2009}) are depicted in Figure
\ref{imageofS}.
 
\begin{figure}[!htb]
\begin{center}
\includegraphics[width=.9\textwidth, height=.3\textheight]{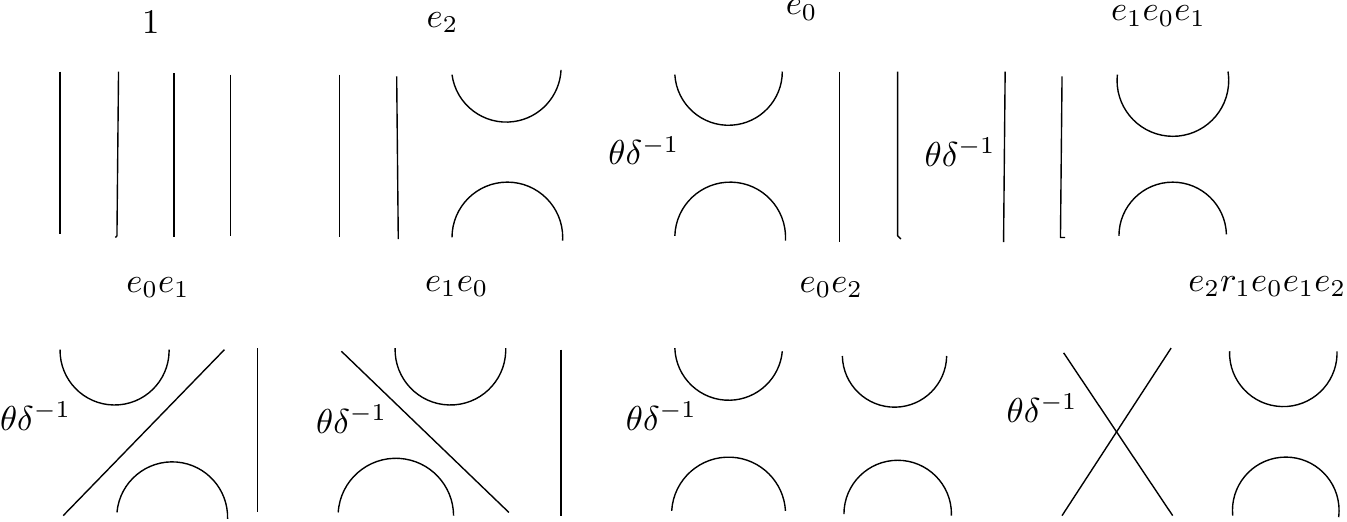}
\end{center}
\caption{The images of $F$ under $\psi\phi$}
\label{imageofS}
\end{figure}

\begin{section}{Admissible root sets and the monoid
action}\label{admissiblerootsets} In this section, we recall some facts
about the root system associated to the Brauer algebra of type $\ddD_{n+1}$,
introduce the definition of admissible root sets of type $\ddB_n$ and
describe some of its basic properties. Also by use of \cite{CLY2010}, we
give a monoid action of $\BrM(\ddB_n)$ on the admissible root sets.

\begin{defn} By $\Phi$ and $\Phi^+$, we denote the root system of type $\ddD_{n+1}$ and  its positive roots, 
and $\Phi^+$ can be realized by vectors $\epsilon_j\pm \epsilon_i$ with
$j>i$ in $\R^{n+1}$ where $\{\epsilon_i\}_{i=1}^{n+1}$ are the canonical
orthonormal basis.  The simple roots are $\alpha_1=\epsilon_1+\epsilon_2$
and $\alpha_i=\epsilon_i-\epsilon_{i-1}$ for $i=2, \ldots, n+1$. If $\alpha$
is a root of $\Phi^+$, then $\alpha^*$ denotes its \emph{orthogonal mate},
that is, $\alpha^*=\epsilon_j+\epsilon_i$ if $\alpha=\epsilon_j-\epsilon_i
\in \Phi^+$, and $(\alpha^*)^*=\alpha$.  When $n\ge 4$, this is the unique
positive root orthogonal to $\alpha$ and all other positive roots orthogonal
to $\alpha$ (see \cite[Definition 2.6]{CGW2008}).  The diagram automorphism
$\sigma$ on $W(\ddD_{n+1})$ is induced by a linear isomorphism $\sigma$ on
$\R^{n+1}$, where $\sigma$ is the orthogonal reflection with root
$\epsilon_1$. We define $\fp: \R^{n+1}\rightarrow \R^{n+1}$ by
$\fp(x)=(\sigma(x)+x)/2$.  The set $\Psi=\fp(\Phi)$ forms a root system of
the Weyl group of type $\ddB_n$ with
$\beta_0=\epsilon_2=(\alpha_1+\alpha_2)/2$ and
$\beta_i=\epsilon_{i+2}-\epsilon_{i+1}=\alpha_{i+2}$ for $i=1,\ldots, n-1$
as simple roots, and $\Psi^+=\fp(\Phi^+)$ consists of all positive roots of
the Weyl group of $W(\ddB_n)$.  A root $\beta \in \Psi$ is called a
\emph{long root} if $|\beta|=\sqrt{2}$, and called a \emph{short root} if
$|\beta|=1$.  The subset of $\Psi^+$ consisting of all short roots is
$\{\epsilon_i\}_{i=2}^{n+1}$.

A set of mutually orthogonal positive roots $A\subset\Phi^+$ is called
\emph{admissible} if, whenever $\gamma_1$, $\gamma_2$, $\gamma_3$ are
distinct roots in $A$ and there exists a root $\gamma\in \Phi $ for which
$|(\gamma,\gamma_i)|=1$ for all $i$, the positive root of $\pm R_\gamma
R_{\gamma_1}R_{\gamma_2}R_{\gamma_3}\gamma$ is also in $A$. An equivalent
definition states that either there is no orthogonal mate for any $\alpha
\in A$ or for each $\alpha\in A$ we have $\alpha^{*}\in A$.

A subset $B$ of $\Psi^+$ of mutually orthogonal roots is called
\emph{admissible}, if $\fp^{-1}(B)\cap \Phi\subset \Phi^+$ is admissible.
This is equivalent to $B$ being the image of some $\sigma$-invariant
admissible set of $\Phi^+$.

We write $\cA$ for the collection $\cA$ of admissible root subsets of
$\Phi^+$ and $\cA_\sigma$ for the set of all admissible subsets of $\Psi^+$
left invariant by $\sigma$. Its elements correspond to the
$\sigma$-invariant elements in $\cA$ via $B\mapsto \fp^{-1}(B)$.
\end{defn}

\begin{rem}  The mutually orthogonal root set $B_1=\{\epsilon_2, \epsilon_3\}$ is not admissible, for
 $\fp^{-1}(B_1)\cap \Phi=\{\alpha_1,
\,\alpha_2,\,\alpha_1+\alpha_3,\,\alpha_2+\alpha_3\}$ is not admissible.
Although $B_2=\{\alpha_1, \alpha_4\}$ is admissible in $\Phi^+$, the set
$\fp(B_2)$ is not admissible, as $\fp^{-1}\fp(B_2)=\{\alpha_1,\alpha_2,
\alpha_4\}$ is not admissible.
\end{rem}

\np By use of the description of admissible sets of type $\ddD$ in
\cite[Section 4]{CGW2008}, the admissible sets of type $\ddB$ can be
classified as indicated below.

\begin{prop}\label{classification}
The admissible root subsets of $\Psi^+$ can be divided into the following three  types.
\begin{itemize}
\item[(1)] The set consists of long roots and none of their orthogonal mates
are in this set.  It belongs to the $W(\ddB_n)$-orbit of
$Z_t=\{\beta_{n-1-2i}\mid 0\leq i< t\}$, for some $t$ with $0\leq
t<(n+1)/2$.
\item[(2)] The set consists of long roots and their orthogonal mates.  It
belongs to the $W(\ddB_n)$-orbit of $\tilde{Z}_t=\{\beta_{n-1-2i},
\beta_{n-1-2i}^{*}\mid 0\leq i< t\}$, for some $t$ with $1\leq t<(n+1)/2$.\\
\item[(3)] The set has only one short root, and for each long root that it
also contains, it also contains its orthogonal mate.  It belongs to the
$W(\ddB_n)$-orbit of $\bar{Z}_t=\{\beta_{n-1-2i}, \beta_{n-1-2i}^{*}\mid
0\leq i< t-1\}\cup \{\beta_0\}$, for some $t$ with $1\leq t\leq (n+1)/2$.
\end{itemize}
\end{prop}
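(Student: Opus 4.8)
The plan is to transport the problem to type $\ddD_{n+1}$ through the correspondence $B\mapsto \fp^{-1}(B)\cap\Phi$ and then exploit the mate dichotomy for admissible sets in $\Phi^+$. First I would record the effect of $\fp$ on roots. Since $\sigma$ is the reflection in $\epsilon_1$, it fixes each $\epsilon_j\pm\epsilon_i$ with $i>1$ and interchanges $\epsilon_j-\epsilon_1$ with $\epsilon_j+\epsilon_1$. Hence $\fp$ sends each long root $\epsilon_j\pm\epsilon_i$ (with $j>i>1$) of $\Psi$ to itself, while it sends the $\sigma$-pair $\{\epsilon_j-\epsilon_1,\,\epsilon_j+\epsilon_1\}$ to the short root $\epsilon_j$. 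Consequently, for an admissible $B\subseteq\Psi^+$ the associated $\sigma$-invariant admissible set $A=\fp^{-1}(B)\cap\Phi\subseteq\Phi^+$ is obtained by keeping the long roots of $B$ unchanged and replacing each short root $\epsilon_j\in B$ by the orthogonal mate pair $\{\epsilon_j-\epsilon_1,\,\epsilon_j+\epsilon_1\}$; conversely every $\sigma$-invariant admissible $A$ arises this way, and $B=\fp(A)$.

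Next I would invoke the equivalent characterization recalled above (from \cite[Section 4]{CGW2008}): a mutually orthogonal $A\subseteq\Phi^+$ is admissible iff it is either \emph{mate-free} (no $\alpha\in A$ has $\alpha^*\in A$) or \emph{mate-complete} (every $\alpha\in A$ has $\alpha^*\in A$). Two consequences drive the classification. First, $B$ contains at most one short root: two short roots $\epsilon_j,\epsilon_k$ would lift to $\{\epsilon_j\pm\epsilon_1,\epsilon_k\pm\epsilon_1\}$, which is not mutually orthogonal since $(\epsilon_j-\epsilon_1,\epsilon_k-\epsilon_1)=1$, so $A$ would fail to be admissible. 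Second, a short root $\epsilon_j\in B$ forces the mate pair $\{\epsilon_j-\epsilon_1,\epsilon_j+\epsilon_1\}$ into $A$, so $A$ is then not mate-free. Combining with the dichotomy: if $A$ is mate-free, then $B$ has no short root and no long mate pair, giving type (1); if $A$ is mate-complete, then the long roots of $B$ occur together with their mates, and $B$ has either no short root, giving type (2), or exactly one, giving type (3). The same dichotomy shows conversely that $Z_t$, $\tilde Z_t$, $\bar Z_t$ are admissible, since their lifts are respectively mate-free and mate-complete.

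Finally I would determine the $W(\ddB_n)$-orbits. The group $W(\ddB_n)\cong W(\ddD_{n+1})^\sigma$ acts on $\Psi$ as the signed permutations of $\epsilon_2,\dots,\epsilon_{n+1}$; it is transitive on short roots, on long roots, and on long mate pairs, and it turns any $\epsilon_j+\epsilon_i$ into $\epsilon_j-\epsilon_i$ by a sign change of $\epsilon_i$. Using mutual orthogonality to see that distinct blocks occupy disjoint index pairs, I would normalize isolated long roots to the form $\epsilon-\epsilon$ by sign changes and then place the supporting index pairs (and, in type (3), the single index of the short root) into their standard positions by a permutation, conjugating each set of type (1), (2), (3) to $Z_t$, $\tilde Z_t$, $\bar Z_t$, where $t$ counts the long roots, the long mate pairs, or the long mate pairs together with the one short root, respectively. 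The stated ranges then follow by counting indices in $\{2,\dots,n+1\}$: types (1) and (2) require $2t\le n$, hence $t<(n+1)/2$, while type (3) requires $2(t-1)+1\le n$, hence $t\le(n+1)/2$.

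The main obstacle I anticipate is bookkeeping rather than conceptual: one must check that the lifted sets $A$ are genuine $\sigma$-invariant admissible sets in $\Phi^+$ and that $B\mapsto A$ is compatible with both group actions, so that $W(\ddB_n)$-orbits downstairs correspond to $W(\ddD_{n+1})^\sigma$-orbits of $\sigma$-invariant sets upstairs; and then to carry out the transitivity step with care so that the mate-free and mate-complete cases are never conflated. Leaning on the explicit $\ddD$-classification of \cite[Section 4]{CGW2008} for the orbit statements, rather than reproving transitivity by hand, should streamline this last step.
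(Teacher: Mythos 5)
Your proposal is correct and follows essentially the same route as the paper, which proves the proposition simply by appealing to the description of admissible sets of type $\ddD$ in \cite[Section 4]{CGW2008} together with the correspondence $B\mapsto \fp^{-1}(B)\cap\Phi$ between admissible subsets of $\Psi^+$ and $\sigma$-invariant admissible subsets of $\Phi^+$. Your write-up merely fills in the details the paper leaves implicit (the at-most-one-short-root observation, the mate-free versus mate-complete dichotomy, and the signed-permutation transitivity argument with the index counts $2t\le n$ and $2(t-1)+1\le n$), all of which check out against the paper's definitions and its remark on the non-admissible set $\{\epsilon_2,\epsilon_3\}$.
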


\begin{lemma}\label{cardi}
 The cardinalities of the  $W(\ddB_{n})$-orbits  of $Z_{t}$, $\bar{Z}_{t}$, and $\tilde{Z}_{t}$ in the above   are
$2^t \binom{n}{2t} t!!$, $n \binom{n-1}{2t-2} (t-1)!!$, and $\binom{n}{2t} t!!$, respectively.
\end{lemma}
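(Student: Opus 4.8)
The plan is to realize $W(\ddB_n)$ as the group of signed permutations of the $n$ coordinates $\epsilon_2,\ldots,\epsilon_{n+1}$ carrying $\Psi$, and to obtain each orbit size by a direct count of admissible sets of the relevant shape rather than through orbit--stabilizer. First I would record the action on $\Psi^+$: a signed permutation sends a short root $\epsilon_a$ to $\pm\epsilon_{a'}$ and a long root $\epsilon_c\pm\epsilon_b$ to $\pm\epsilon_{c'}\pm\epsilon_{b'}$; passing to positive representatives, it preserves root length, carries orthogonal mates to orthogonal mates, and sends mutually orthogonal sets to mutually orthogonal sets. Consequently the three invariants distinguishing the types in Proposition \ref{classification} --- the number of long roots, the presence or absence of a short root, and whether each long root occurs together with its mate --- are constant on $W(\ddB_n)$-orbits.

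Combining this with Proposition \ref{classification}, each admissible set lies in the orbit of the unique listed representative of its shape, so the orbit of $Z_t$ (resp.\ $\tilde Z_t$, $\bar Z_t$) is exactly the family of all admissible subsets of $\Psi^+$ of the corresponding shape. Concretely: the orbit of $Z_t$ consists of all sets of $t$ mutually orthogonal long roots supported on $t$ disjoint coordinate pairs, one root per pair; the orbit of $\tilde Z_t$ of all sets of $t$ full mate-pairs; and the orbit of $\bar Z_t$ of all sets consisting of one short root together with $t-1$ mate-pairs on coordinates disjoint from it. The counting then rests on the fact that $2t$ marked coordinates can be split into $t$ unordered pairs in $(2t-1)!!$ ways, which in the paper's convention is $t!!$.

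The counts then read off as follows. For $\tilde Z_t$: choose the $2t$ supporting coordinates in $\binom{n}{2t}$ ways and a pairing in $t!!$ ways, with no sign freedom since both mates on each pair are present, giving $\binom{n}{2t}\,t!!$. For $Z_t$: the same data plus a choice, on each of the $t$ pairs, of one of its two positive long roots, contributing a factor $2^t$ and hence $2^t\binom{n}{2t}\,t!!$. For $\bar Z_t$: choose the short root among the $n$ positive short roots, then split $2(t-1)$ of the remaining $n-1$ coordinates into $t-1$ pairs in $\binom{n-1}{2t-2}(t-1)!!$ ways (again no sign freedom), giving $n\binom{n-1}{2t-2}(t-1)!!$. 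These are the three asserted cardinalities.

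The one genuinely delicate point --- and the only real obstacle --- is the bookkeeping of the sign factor $2^t$, which must appear for $Z_t$ but not for $\tilde Z_t$ or $\bar Z_t$: a sign change on a coordinate interchanges the two positive long roots on a pair, so it is visible on a pair carrying a single root but acts trivially on a full mate-pair. I would also verify the degenerate case $t=1$ of $\bar Z_t$, where the formula collapses to $n$ (the orbit of a single short root), and confirm the convention $m!!=(2m-1)!!$ against the ranks tabulated after Theorem \ref{thm:main}.
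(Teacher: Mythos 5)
Your proof is correct, but it takes a different route from the paper's. The paper disposes of this lemma in two lines by invoking the diagrammatic dictionary of Remark \ref{rm:cA}: each orbit is identified with a family of tops of (decorated) Brauer diagrams of type $\ddD_{n+1}$ --- for $Z_t$, tops with $t$ decorated horizontal strands avoiding the point $1$; for $\bar Z_t$, undecorated tops with $t$ strands, one of which has end $1$; for $\tilde Z_t$, undecorated tops with $t$ strands avoiding $1$ --- and the counts are then read off from the strand configurations. You instead stay entirely inside the root system: $W(\ddB_n)$ as signed permutations preserves length, orthogonality and mates, so the three shapes of Proposition \ref{classification} are orbit invariants, and each orbit is counted coordinate-wise. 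The combinatorial core is identical in both arguments ($\binom{n}{2t}$ for the support, $t!!=(2t-1)!!$ for the pairing, and the factor $2^t$, which in the paper appears as the decoration choice on each strand and in your version as the choice between $\epsilon_c-\epsilon_b$ and $\epsilon_c+\epsilon_b$ on each coordinate pair); what your version buys is independence from the diagram algebra $\BrD(\ddD_{n+1})$ and its correspondence with admissible sets.

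One small point to make explicit: your identification of the orbit of $Z_t$ (resp.\ $\tilde Z_t$, $\bar Z_t$) with \emph{all} sets of the corresponding shape uses not only Proposition \ref{classification} (which places every \emph{admissible} set of that shape in the orbit) but also the converse fact that every set of that shape is admissible; otherwise your count could a priori overcount the orbit. This is easily supplied from the paper's equivalent formulation of admissibility (a mutually orthogonal set in $\Phi^+$ is admissible iff it contains no mate pair or is closed under mates) together with the description of $\fp^{-1}$: a long root $\epsilon_c\pm\epsilon_b$ with $b,c\ge 2$ pulls back to itself, while a short root $\epsilon_a$ pulls back to the mate pair $\{\epsilon_a-\epsilon_1,\epsilon_a+\epsilon_1\}$, so shape-(1) sets pull back to mate-free sets and shape-(2), (3) sets to mate-closed sets, all admissible. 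With that one-line check inserted, your argument is complete, and your verifications of the degenerate case $\bar Z_1$ (orbit size $n$) and of the convention $m!!=(2m-1)!!$ against the table after Theorem \ref{thm:main} are both apt.
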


\begin{proof} The orbits of $Z_t$, $\bar{Z}_t$, and $\tilde{Z}_t$
correspond to, respectively,
\begin{enumerate}[(1)]
\item the diagrams with exactly $t$ decorated horizontal strands at the
top in $\BrD(\ddD_{n+1})$ none of which has end $1$, 
\item the diagrams with exactly $t$ horizontal
strands at the top without decoration one of which has end point $1$,
\item
the diagrams with exactly $t$ horizontal strands at the top without
decoration and none of which has end point $1$.
\end{enumerate}
Therefore, the sizes are easily seen to be as stated.
\end{proof}

\np Just as in \cite{CFW2008}, these results will be applied to compute the
rank of $\Br(\ddB_n)$.  The lemma below can be proved analogously to Lemma
5.7 of \cite{CLY2010}.

\begin{lemma}\label{lm:SimpleRootRels}
Let $i$ and $j$ be nodes of the Dynkin diagram $\ddB_n$.
If $w\in W(\ddB_n)$ satisfies $w\beta_i=\beta_j$, then
$we_iw^{-1}=e_j$.
\end{lemma}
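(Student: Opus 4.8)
The plan is to lift the Weyl-group identity $w r_i w^{-1}=r_{w\beta_i}=r_{\beta_j}=r_j$, valid in $W(\ddB_n)$, to the algebra identity $we_iw^{-1}=e_j$ in $\Br(\ddB_n)$, by induction on the length $\ell(w)$. The case $\ell(w)=0$ is trivial, since then $\beta_i=\beta_j$ and $i=j$. Two elementary moves should drive the induction, each powered by one defining relation. First, a simple reflection $r_k$ with $k\perp j$ fixes $\beta_j$, and by (\ref{0.1.8}) together with $r_k^2=1$ from (\ref{0.1.3}) it fixes $e_j$ under conjugation, $r_ke_jr_k=e_j$. Second, for $k\sim j$ with $k,j>0$ the braid relation (\ref{0.1.15}) gives the $\ddA_2$-identity
\[
r_kr_j\,e_k\,r_jr_k \overset{(\ref{0.1.15})}{=} r_k\,(r_ke_jr_k)\,r_k \overset{(\ref{0.1.3})}{=} e_j,
\]
which matches the root computation $(r_kr_j)\beta_k=\beta_j$. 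In the inductive step, since $w^{-1}\beta_j=\beta_i>0$ the index $j$ is never a left descent, so any left descent $k$ satisfies $k\ne j$ (and symmetrically any right descent $k$ satisfies $k\ne i$). If some left descent has $k\perp j$, I write $w=r_kw'$ and note $w'\beta_i=r_k\beta_j=\beta_j$, so induction plus $r_ke_jr_k=e_j$ finishes. If instead a left descent has $k\sim j$ (with both long), the intended move is the double step $w''=r_jr_kw$, for which $w''\beta_i=r_jr_k\beta_j=\beta_k$ is again simple; granting $\ell(w'')=\ell(w)-2$, induction and the $\ddA_2$-identity above give $we_iw^{-1}=r_kr_j\,e_k\,r_jr_k=e_j$.

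For long simple roots $\beta_i,\beta_j$ (indices $\ge 1$) these two moves close the argument cleanly. The reason is that the long roots all lie in the type $\ddA_{n-1}$ part, where the stabilizer of a root is generated by the simple reflections perpendicular to it; thus the perpendicular peel handles the stabilizer, while the $\ddA_2$ double steps move $\beta_i$ toward $\beta_j$ along the simply-laced bonds, never producing a non-simple intermediate that we cannot resolve. One checks that the hypothesis $k,j>0$ of (\ref{0.1.15}) is always met here, because $\beta_i\sim_W\beta_j\sim_W\beta_k$ forces all indices involved in a double step to give roots of equal length, hence all long.

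The delicate point, and the place where the naive descent induction breaks down, is the short root $\beta_0$. Since $\beta_0$ is the unique short simple root, the only way $w\beta_i=\beta_j$ with $\beta_i,\beta_j$ short is $i=j=0$, and then $w\in\mathrm{Stab}(\beta_0)$. This stabilizer is of type $\ddB_{n-1}$ and is \emph{not} generated by the simple reflections perpendicular to $\beta_0$: the prototype is $w=r_1r_0r_1$, whose left and right descent sets both equal $\{1\}$, so neither a perpendicular peel nor a legal $\ddA_2$ step (which would illegally cross the double bond $\{0,1\}$, as $\beta_0$ and $\beta_1$ have different lengths and are not conjugate) is available. Such configurations must be resolved using the $\ddB$-specific relations. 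For the prototype,
\[
e_0\,r_1r_0r_1 \overset{(\ref{0.1.17})}{=} e_0\,r_1r_1 \overset{(\ref{0.1.3})}{=} e_0,
\]
and applying the anti-involution of Proposition \ref{prop:opp} gives $r_1r_0r_1\,e_0=e_0$ as well, whence $(r_1r_0r_1)e_0(r_1r_0r_1)=e_0$ as required. The remaining double-bond configurations around node $0$ are handled analogously by (\ref{0.1.14}), (\ref{0.1.20}), (\ref{0.1.11}) and the derived identities (\ref{4.1.1})--(\ref{4.1.5}).

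I expect the main obstacle to be exactly this short-root bookkeeping: organizing the induction so that every $w\in\mathrm{Stab}(\beta_0)$, and more generally every element whose reduction would require crossing the double bond, is rewritten by the special relations rather than by the two generic moves. Away from node $0$ the argument is a routine length induction, but near the double bond one must identify the correct reductions and verify that they terminate; this is precisely the content transported from Lemma~5.7 of \cite{CLY2010}, which I would follow step for step.
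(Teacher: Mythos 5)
Your generic machinery is sound---indeed sounder than you claim: the length drop $\ell(r_jr_kw)=\ell(w)-2$ that you only ``grant'' is automatic, since $(r_kw)^{-1}\beta_j=\beta_i+w^{-1}\beta_k$ is a root of the form (simple root)$\,-\,$(positive root), and a simple root is never a sum of positive roots, so this root is negative and $j$ is a left descent of $r_kw$. Your localization of the trouble to the double bond is also correct: for $j\geq 2$ every left descent $k\neq j$ is either $\perp j$ or simply-laced adjacent to $j$, so the induction can only stall when $j\in\{0,1\}$ and every left descent is the other node of the bond $\{0,1\}$. One side remark is simply false, though not load-bearing for your descent induction: root stabilizers are \emph{not} generated by the perpendicular simple reflections, even in type $\ddA$ (in $S_4$ the transposition $(1\,4)$ stabilizes $\epsilon_3-\epsilon_2$ but no simple reflection perpendicular to that root exists), and in $W(\ddB_n)$ the stabilizer of $\beta_1$ contains $r_0r_1r_0=r_{\beta_1^*}$ and $r_2r_1r_0r_1r_2=r_{\epsilon_4}$.

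The genuine gap is the sentence ``the remaining double-bond configurations \ldots are handled analogously.'' The stuck set is much larger than your prototype $r_1r_0r_1$: already in $\ddB_3$ the element $w=r_{\epsilon_3+\epsilon_4}=r_2\,(r_1r_0r_1)\,(r_2r_1r_0r_1r_2)$ has length $7$, satisfies $w\beta_0=\beta_0$, and has left descent set exactly $\{1\}$, so neither of your two moves applies; and peeling $r_1$ exits the simple-root world ($r_1\beta_0=\epsilon_3$ is not simple), which your induction cannot track, since defining $e_{\epsilon_3}$ presupposes the very lemma being proved. What is missing is a third move: if $j=0$ and the only descent is $1$, then (again because a simple root is no nonnegative combination of positive roots, with the short/long length comparison ruling out degenerate cases) $1$, $0$, $1$ are successive left descents, so $w=(r_1r_0r_1)w'$ with $\ell(w')=\ell(w)-3$ and $w'\beta_0=\beta_0$, and your prototype identity $(r_1r_0r_1)e_0(r_1r_0r_1)=e_0$ from (\ref{0.1.14}) and (\ref{0.1.17}) closes the step; symmetrically, if $j=1$ and the only descent is $0$, then $w=(r_0r_1r_0)w'$ with $w'\beta_i=\beta_1$, closed by (\ref{0.1.20}). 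This triple step across the double bond, with its length and root verifications, is precisely what ``analogously'' hides. The paper organizes the proof differently, avoiding descent bookkeeping altogether: following Lemma 5.7 of \cite{CLY2010}, it reduces the statement to checking that explicit generators of the stabilizer of $\beta_i$ in $W(\ddB_n)$ normalize $e_i$ (for $\beta_1$: $r_1$, $r_0r_1r_0$, $r_2r_1r_0r_1r_2$, and the $r_k$ with $k\perp 1$; for $\beta_0$: $r_0$, $r_1r_0r_1$, $r_2,\ldots,r_{n-1}$), each against the same relations you cite---which is, in effect, exactly the part of the argument you deferred by saying you would follow that lemma ``step for step.''
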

Consider a positive root $\beta$ and a node $i$ of type $\ddB_n$. If there
exists $w\in W$ such that $w\beta_i=\beta$, then we can define the element
$e_{\beta}$ in $\BrM(\ddB_n)$ by
$e_{\beta}=we_iw^{-1}$.
The above lemma implies that $e_\beta$ is well defined. In general,
$we_{\beta}w^{-1}=e_{w\beta}$, for $w\in W(\ddC_{n+1})$ and $\beta$ a root
of $W(\ddB_n)$. Note that $e_{\beta}=e_{-\beta}$ in view of (\ref{0.1.4}).
We write $e_{\beta}^*=e_{\beta^*}$ and $e_i^*=e_{\beta_i^*}$ for
$\beta\in \Psi^+$ and $1\leq i\leq n-1$ in accordance with the definition before Lemma \ref{ee*}.

\begin{lemma} \label{2orthogonalroots}
If $\{\gamma_1,\,\gamma_2\}$ is a subset of some admissible root set, then
$$e_{\gamma_1}e_{\gamma_2}=e_{\gamma_2}e_{\gamma_1}.$$
\end{lemma}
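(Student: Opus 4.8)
The plan is to reduce the general statement to a small set of concrete local configurations by exploiting the $W(\ddB_n)$-action and Lemma \ref{lm:SimpleRootRels}. Since $\{\gamma_1,\gamma_2\}$ lies in an admissible set, Proposition \ref{classification} tells me that, after applying a suitable $w\in W(\ddB_n)$ (which conjugates $e_{\gamma_1}e_{\gamma_2}$ to $e_{w\gamma_1}e_{w\gamma_2}$ and preserves commutation), I may assume the pair is a two-element subset of one of the standard orbit representatives $Z_t$, $\tilde Z_t$, or $\bar Z_t$. Because $we_{\gamma}w^{-1}=e_{w\gamma}$, it suffices to verify commutation for the finitely many shapes a pair of mutually orthogonal roots inside such a representative can take.

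Concretely, I would enumerate the cases. If both $\gamma_1,\gamma_2$ are long roots with disjoint, non-adjacent supports (as for two members $\beta_{n-1-2i},\beta_{n-1-2j}$ of $Z_t$), then after conjugating into simple-root position the relevant generators $e_i,e_j$ satisfy $i\perp j$ and commute by \eqref{0.1.9}. The genuinely new cases are those involving a long root and its orthogonal mate, i.e.\ a pair $\{\beta,\beta^*\}$, and the case where one root is the short root $\beta_0$ together with some $\beta,\beta^*$ from a $\bar Z_t$. For the orthogonal-mate pair I expect to reduce, via conjugation, to showing $e_i e_i^*=e_i^* e_i$ for a single $i$; here I would invoke the explicit formula \eqref{eieistar} of Lemma \ref{ee*}, together with its opposite under the anti-involution of Proposition \ref{prop:opp}, to see that $e_ie_i^*$ is symmetric and hence equals $e_i^* e_i$. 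The short-root cases should reduce to the base configurations in $\Br(\ddB_3)$ (or $\Br(\ddB_2)$), where commutation can be checked directly from the defining relations and the identities in Lemma \ref{e2r1e0e1e2}, for instance using $e_0e_2=e_2e_0$ via \eqref{0.1.9} and the derived relation \eqref{errreree3}.

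The main obstacle will be the bookkeeping that justifies the reduction: I must confirm that conjugating the \emph{pair} into standard position is legitimate, i.e.\ that there is a single $w\in W(\ddB_n)$ carrying $\{\gamma_1,\gamma_2\}$ simultaneously onto a subset of a fixed representative, and that $e_{\gamma_1}e_{\gamma_2}=e_{\gamma_2}e_{\gamma_1}$ is genuinely equivalent to the commutation of the conjugated generators. The first point follows because $W(\ddB_n)$ acts transitively on admissible sets of each type (Proposition \ref{classification}) and a two-element subset of an admissible set is again admissible, so I can first move the ambient admissible set to its representative and then the pair sits inside it. The second point is immediate from $we_{\gamma_k}w^{-1}=e_{w\gamma_k}$, since conjugation by $w$ is an algebra automorphism, so $e_{\gamma_1}e_{\gamma_2}-e_{\gamma_2}e_{\gamma_1}=w^{-1}\bigl(e_{w\gamma_1}e_{w\gamma_2}-e_{w\gamma_2}e_{w\gamma_1}\bigr)w$ vanishes iff the bracketed term does.

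In summary, the structure is: first reduce to pairs inside $Z_t$, $\tilde Z_t$, $\bar Z_t$ using the orbit classification and the conjugation formula; then split into the cases (two independent long roots, a long root with its mate, and configurations involving the short root $\beta_0$); and finally dispatch each case either by the orthogonality relation \eqref{0.1.9}, by the symmetry of $e_ie_i^*$ coming from \eqref{eieistar} and the anti-involution, or by direct computation in the small-rank algebras using Lemma \ref{e2r1e0e1e2}. I expect the long-root/mate case to require the most care, since it is the one not covered by a naive $\perp$ relation and genuinely relies on the structure of $e_i^*$.
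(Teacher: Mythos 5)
Your proposal follows essentially the same route as the paper's proof: reduce, via Proposition \ref{classification} and the conjugation rule $we_{\gamma}w^{-1}=e_{w\gamma}$, to the three standard pairs $(\beta_0,\beta_2)$, $(\beta_1,\beta_3)$, $(\beta_1,\beta_1^{*})$, and dispatch the first two by \eqref{0.1.9}. Your handling of the mate pair is a mild variant: the paper quotes \eqref{0.1.19} and \eqref{4.1.5} to see that both $e_1e_1^{*}$ and $e_1^{*}e_1$ equal $e_1e_0e_1$, whereas you observe that $e_ie_i^{*}$ equals the palindromic word of \eqref{eieistar} in Lemma \ref{ee*}, hence is fixed by the anti-involution of Proposition \ref{prop:opp}, so $e_ie_i^{*}=(e_ie_i^{*})^{\op}=e_i^{*}e_i$ for every $i$ at once. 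Since \eqref{eieistar} for $i=1$ is itself proved from \eqref{4.1.5}, the two arguments are close relatives; yours has the small advantage of not needing a separate transitivity step to bring a general mate pair down to $(\beta_1,\beta_1^{*})$.

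One assertion in your bookkeeping is false, though fortunately not load-bearing: it is \emph{not} true that a two-element subset of an admissible set is again admissible. The paper's own remark in Section \ref{admissiblerootsets} is exactly a counterexample: $\fp(B_2)=\{\beta_0,\beta_2\}$ is not admissible, even though it is contained in the admissible ($\bar{Z}_2$-type) set $\{\beta_0,\beta_2,\beta_2^{*}\}$. In type $\ddB_n$ a set containing the short root is admissible only if it also contains the orthogonal mate of every long root it contains, so precisely your case of a pair $(\beta_0,\beta)$ with $\beta$ long violates the claim. Your fallback justification is the correct one and never needs admissibility of the pair itself: first move the ambient admissible set onto its orbit representative $Z_t$, $\tilde{Z}_t$, or $\bar{Z}_t$ (Proposition \ref{classification}), so the pair sits inside the representative, and then use transitivity of $W(\ddB_n)$ on pairs of each of the three possible shapes (two non-mate orthogonal long roots; a long root with its mate; the short root with an orthogonal long root) to reach the standard pairs, commutation transferring because conjugation is an algebra automorphism. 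Delete the false sentence and rest on that ambient-set reduction; everything else stands.
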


\begin{proof}  
In view of Proposition \ref{classification}, the proof can be reduced to a
check in the following three cases

\centerline{$(\gamma_1,\gamma_2)=(\beta_0,\beta_2)$,
$(\beta_1, \beta_3)$, or
$(\beta_1,\beta_1^{*})$.}

\noindent
In the first two cases the lemma holds due to (\ref{0.1.9}). In the third
case it holds by (\ref{0.1.19}) and (\ref{4.1.5}).
\end{proof}

\np
Let $X\subset \Psi^+$ be  a subset of  some admissible root set.  Then we  define
\begin{eqnarray}\label{e}
e_{X}=\Pi_{\beta\in X}e_{\beta}.
\end{eqnarray}
In view of Lemma \ref{2orthogonalroots}, it is well defined.  Since the
intersection of admissible sets in $\cA$ is still an admissible set,
there is an admissible closure for mutually orthogonal subset of
$\Phi^+$, we can similarly give a definition for some subsets of $\Psi^+$ as
the following.

\begin{defn} 
Suppose that $X\subset \Psi^+$ is a mutually orthogonal root set. If $X$ is
a subset of some admissible root set, then the minimal admissible set
containing $X$ is called the \emph{admissible closure} of $X$, denoted by
$\overline{X}$.
\end{defn}

\begin{lemma} Let $X\subset \Psi^+$ be a mutually orthogonal root set and  $\overline X$ exists. Then
$$e_{\overline{X}}=\delta^{|\overline{X}\setminus X|}e_{X}.$$
\end{lemma}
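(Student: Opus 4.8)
The plan is to prove that $e_{\overline{X}} = \delta^{|\overline{X}\setminus X|} e_X$ by understanding how the admissible closure is built up from $X$ one root at a time, and showing that each additional root contributes exactly one factor of $\delta$. Recall that $e_X = \prod_{\beta \in X} e_\beta$ is well defined by Lemma \ref{2orthogonalroots}, and that $\overline{X}$ is the minimal admissible set containing $X$. The key observation is that, by Proposition \ref{classification} and the definition of admissibility (for each $\alpha$ in the set with an orthogonal mate $\alpha^*$ in the ambient system, the mate must also lie in the set), the roots of $\overline{X}\setminus X$ come in a controlled way: each is the orthogonal mate $\beta^*$ of some long root $\beta$ already present, and $\beta^* \notin X$. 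So the task reduces to showing that adjoining such a mate multiplies $e_X$ by a single $\delta$.

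First I would reduce to the local statement: if $\beta \in X$ is a long root whose mate $\beta^*$ is not in $X$, then $e_\beta e_{\beta^*} = \delta\, e_\beta e_{\beta^*}$ is \emph{not} what we want — rather I want $e_{X \cup \{\beta^*\}} = \delta\, e_X$. The cleanest route is to establish the single identity $e_\beta e_{\beta^*} = \delta\, e_\beta$ whenever $\beta,\beta^*$ are an orthogonal-mate pair of long roots. By the $W(\ddB_n)$-equivariance $w e_\gamma w^{-1} = e_{w\gamma}$ together with Lemma \ref{lm:SimpleRootRels}, I can conjugate the pair $(\beta,\beta^*)$ into a standard position. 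Using Proposition \ref{classification}(2), every orthogonal-mate pair of long roots lies in the $W(\ddB_n)$-orbit of $(\beta_{n-1}, \beta_{n-1}^*)$, and the smallest concrete instance is $(\beta_1,\beta_1^*)$. For that pair the relation $e_1 e_1^* = \delta\, e_1$ should follow from the explicit formulas: by equation (\ref{eieistar}) of Lemma \ref{ee*} we have $e_1 e_1^* = e_1 e_0 e_1$, and then relation (\ref{0.1.16}), $e_0 e_1 e_0 = \delta e_0$, together with (\ref{4.1.1}) and the idempotent relations, collapses this to $\delta\, e_1$.

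Next I would run an induction on $|\overline{X}\setminus X|$, peeling off the mates one at a time. Order the roots of $\overline{X}\setminus X$ as $\beta_{(1)}^*, \ldots, \beta_{(k)}^*$ where each $\beta_{(j)} \in X$ is long; since the roots of $\overline X$ are mutually orthogonal, all the $e$-factors commute by Lemma \ref{2orthogonalroots}, so I can write $e_{\overline X} = \big(\prod_{\beta \in X} e_\beta\big) \cdot \prod_{j=1}^{k} e_{\beta_{(j)}^*}$ and absorb one mate at a time. At each step I bring $e_{\beta_{(j)}^*}$ next to its partner $e_{\beta_{(j)}}$ (legal because everything commutes), apply the local identity $e_{\beta_{(j)}} e_{\beta_{(j)}^*} = \delta\, e_{\beta_{(j)}}$, and thereby remove one mate at the cost of one factor $\delta$. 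After $k$ steps nothing but $e_X$ and the scalar $\delta^k = \delta^{|\overline{X}\setminus X|}$ remains, giving the claim.

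The main obstacle I anticipate is the bookkeeping in the reduction to standard position: I must verify that every pair $(\beta,\beta^*)$ of long orthogonal mates really can be moved by some $w \in W(\ddB_n)$ to $(\beta_1,\beta_1^*)$ while simultaneously fixing the rest of $X$ setwise, so that the conjugation used to transport the local identity does not disturb the other factors. This is where admissibility is essential: because $\overline X$ is admissible and its roots are mutually orthogonal, the stabilizer/transitivity arguments underlying Lemma \ref{lm:SimpleRootRels} apply, and the equivariance $w e_\gamma w^{-1} = e_{w\gamma}$ lets me conjugate the single relation $e_1 e_1^* = \delta e_1$ to $e_{\beta} e_{\beta^*} = \delta e_\beta$ for any mate pair. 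I would organize this carefully, but the underlying algebra is just the orbit description of Proposition \ref{classification} combined with the commutation of Lemma \ref{2orthogonalroots}, so once the local $\delta$-identity is in hand the induction is routine.
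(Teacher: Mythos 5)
Your proof has a fatal gap: the ``local identity'' $e_\beta e_{\beta^*}=\delta\, e_\beta$ on which the whole induction rests is false in $\Br(\ddB_n)$. For $\beta=\beta_1$ it would say $e_1e_1^*=e_1e_0e_1=\delta e_1$, and the relations do not give this: relation (\ref{0.1.16}) reads $e_0e_1e_0=\delta e_0$, and you cannot flip it to $e_1e_0e_1=\delta e_1$ --- the defining relations are deliberately asymmetric in the indices $0$ and $1$ because $\beta_0$ is short and $\beta_1$ is long (the only available rewriting is (\ref{4.1.1}), $e_1e_0e_1=e_1r_0e_1$, which does not collapse further). Concretely, $e_1e_0e_1$ is listed in the paper as a member of the set $F$ for $n=3$, spanning its own direct summand $\Z[\delta^{\pm1}]W(\ddB_3)\,e_1e_0e_1\,W(\ddB_3)$ of rank $9$, disjoint from the summand containing $e_1$ (which is conjugate to $e_2$); more generally the normal forms $f_t^{(2)}=\prod e_{n+1-2i}e_{n+1-2i}^*$ are basis elements independent of the $f_t^{(1)}$'s, and diagrammatically $\psi\phi(e_ie_i^*)$ carries the coefficient $\xi$ (the ``doubled'' strand), which in $H$ is not a power of $\delta$ times a plain connector. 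If your identity held, $f_1^{(2)}$ would equal $\delta f_1^{(1)}$ up to conjugation and the rank would drop below $f(n)$, contradicting Theorem \ref{thm:main}. Note also that your identity is strictly stronger than anything the lemma asserts: for $X=\{\beta,\beta^*\}$ (or $X=\{\beta\}$) the set $X$ is already admissible, so $\overline X=X$ and no $\delta$ appears.

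The misdiagnosis is in \emph{where} the $\delta$-factors come from. The closure $\overline X$ differs from $X$ only when $X$ contains a short root or at least one complete mate pair (a lone long root does not force its mate into the closure, by Proposition \ref{classification}(1)), and it is exactly those elements of $X$ --- not the partner $e_\beta$ of the new mate --- that absorb each added $e_{\beta^*}$ at cost $\delta$. This is what the paper's proof does: after using $W(\ddB_n)$-conjugation to normalize to subsets of $\bar Z_t$ or $\tilde Z_t$ (this reduction step of yours is fine), it verifies the two base identities $e_0e_2e_2^*=\delta e_0e_2$ (absorption via the short root, using $e_0e_1e_0=\delta e_0$) and $e_1e_1^*e_3e_3^*=\delta e_1e_1^*e_3$ (absorption via an existing mate pair, using $e_1^2=\delta e_1$ and the type-$\ddA$ relations), each by a genuinely multi-step rewriting, and then inducts on $|X|$. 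So the correct repair of your argument is to replace the false one-step identity with these two ``relative'' absorption identities and run your peeling induction with them; the commutativity from Lemma \ref{2orthogonalroots} and the conjugation bookkeeping you describe then go through as you intended.
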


\begin{proof}If $\overline{X}$ is in the $W(\ddB_n)$-orbit of $Z_{t}$, then it is trivial.
If $\overline{X}$ is in the $W(\ddB_n)$-orbit of $\bar{Z}_{t}$ or $\tilde{Z}_{t}$, then  $X$ can be transformed into  a subset of  $\bar{Z}_{t}$ or $\tilde{Z}_{t}$
by the action of  some element of $W(\ddB_n)$. Hence we just consider  subsets of $\bar{Z}_{t}$ and $\tilde{Z}_{t}$.
Now
\begin{eqnarray*}
e_0e_2e_2^{*}&=& e_0e_2r_1r_0(r_1e_2r_1)r_0r_1\overset{(\ref{0.1.15})}{=}e_0 e_2 r_1(r_0r_2)e_1r_2r_0r_1\\
&\overset{(\ref{0.1.7})}{=}&e_0 (e_2 r_1r_2)r_0e_1(r_2r_0)r_1\overset{(\ref{3.1.1})+(\ref{0.1.7})}{=}(e_0 e_2) (e_1r_0e_1r_0)r_2r_1\\
&\overset{(\ref{0.1.9})+(\ref{4.1.5})}{=}&e_2 (e_0 e_1e_0)e_1r_2r_1\overset{(\ref{0.1.16})}{=}\delta (e_2 e_0)e_1r_2r_1\\
&\overset{(\ref{0.1.9})}{=}&\delta e_0 (e_2e_1r_2r_1)\overset{(\ref{3.1.4})+(\ref{0.1.3})}{=}\delta e_0 e_2,
\end{eqnarray*}
\begin{eqnarray*}
e_{1}e_{1}^{*}e_{3}e_{3}^{*}
&\overset{(\ref{4.1.5})}{=}&e_1e_0(e_1e_3)r_2r_1r_0r_1r_2e_3r_2r_1r_0r_1r_2\\
&\overset{(\ref{0.1.9})}{=}&e_1e_0e_3(e_1r_2r_1)r_0r_1r_2e_3r_2r_1r_0r_1r_2\\
&\overset{(\ref{3.1.1})}{=}&e_1e_0e_3e_1(e_2r_0)r_1r_2e_3r_2r_1r_0r_1r_2\\
&\overset{(\ref{0.1.8})}{=}&e_1e_0(e_3e_1r_0)(e_2r_1r_2)e_3r_2r_1r_0r_1r_2\\
&\overset{(\ref{0.1.8})+(\ref{0.1.9})+(\ref{3.1.1})}{=}&e_1e_0e_1r_0e_3e_2(e_1e_3)r_2r_1r_0r_1r_2\\
&\overset{(\ref{0.1.18})}{=}&e_1e_0e_1e_3e_2e_3(e_1r_2r_1)r_0r_1r_2\\
&\overset{(\ref{3.1.1})}{=}&e_1e_0e_1(e_3e_2e_3)e_1e_2r_0r_1r_2\\
&\overset{(\ref{3.1.5})}{=}&e_1e_0e_1(e_3e_1)e_2r_0r_1r_2\overset{(\ref{0.1.9})}{=}e_1e_0(e_1e_1)e_3e_2r_0r_1r_2\\
&\overset{(\ref{0.1.5})}{=}&\delta e_1e_0e_1(e_3e_2r_0)r_1r_2\overset{(\ref{0.1.8})}{=}\delta (e_1e_0e_1r_0)e_3e_2r_1r_2\\
&\overset{(\ref{0.1.18})}{=}&\delta e_1e_0(e_1e_3)e_2r_1r_2\overset{(\ref{0.1.9})}{=}\delta e_1e_0e_3(e_1e_2r_1r_2)\\
&\overset{(\ref{3.1.4})}{=}&\delta e_1e_0e_3e_1(r_2r_2)\overset{(\ref{0.1.3})}{=}\delta e_1e_0e_3e_1\overset{(\ref{4.1.5})}{=}\delta e_{1}e_{1}^{*}e_{3}.
\end{eqnarray*}
This proves the lemma for $|X|=2$, $3$.
By applying induction on $|X|$, we see the lemma holds.
\end{proof}

\begin{rem}\label{rm:cA}
In \cite[Definition 3.2]{CFW2008}, an action of the Brauer monoid
$\BrM(\ddD_{n+1})$ on the collection $\cA$ is defined as follows.
The generators $\{R_{i}\}_{i=1}^{n+1}$ act
by the natural action of Coxeter group elements on its
root sets, where negative roots are negated so as to obtain positive roots,
the element $\delta$ acts as the identity,
and the action of $\{E_{i}\}_{i=1}^{n+1}$ is defined below.
\begin{equation}
E_i B :=\begin{cases}
B & \text{if}\ \alpha_i\in B, \\
\overline{B\cup \{\alpha_{i}\}} & \text{if}\ \alpha_i\perp B,\\
R_\beta R_i B & \text{if}\ \beta\in B\setminus \alpha_{i}^{\perp}.
\end{cases}
\end{equation}
In \cite[Section 4]{CGW2009}, a root $\epsilon_i-\epsilon_j$
($\epsilon_i+\epsilon_j$) with $1\leq j<i\leq n+1$ is represented as a
(decorated) horizontal strand from $i$ to $j$ at the top of the
diagram. This way, the above monoid action can be given a diagram
explanation in which admissible sets are represented by the set of
horizontal strands at the top of a diagram. 

\label{monoidaction}As in \cite[Proposition 5.6]{CLY2010},
there is a monoid action of $\BrM(\ddB_n)$ on $\cA_\sigma$ under the
composition of the above action of $\Br(\ddD_{n+1})$ and $\phi$. It gives an
action of $\Br(\ddB_n)$ on the collection of admissible subsets of $\Psi^+$.
This action also has a diagrammatic interpretation obtained form viewing the
admissible subsets of $\Psi^+$ as tops of symmetric diagrams by use of
$\fp^{-1}$.
\end{rem}
\end{section}

\begin{section}{Upper bound on the rank}
\label{upperbound}
In Theorem \ref{rewrittenforms} of this section, normal forms for elements
of the Brauer monoid $\BrM(\ddB_n)$ will be given, and in Corollary
\ref{rankatmost}, a spanning set for $\Br(\ddB_n)$ of size $f(n)$ will
be given. The rank of $\Br(\ddB_n)$ will be proved to be $f(n)$ as a
consequence.  These results will provide a proof of Theorem \ref{thm:main}.

The normal forms of monomials in $\Br(\ddB_n)$ will be parameterized by a
set $F$ of elements $f_t^{(i)}$ to be defined below, a general form being $a
= uf_t^{(i)}v$ for certain elements $u,v \in W(\ddB_n)$. In fact, $F$ is a
set of representatives for the $W(\ddB_n)$-orbits of the admissible sets
$a(\emptyset)$ and $a^{\op}(\emptyset)$ which appear in the guise of
horizontal strands at top and bottom, respectively, as discussed in Remark
\ref{rm:cA}. Recall $g=e_2r_1e_0e_1e_2$ from (\ref{eq:g}).

\begin{Notation}
\label{df:fti}
We define
\begin{eqnarray*}
f_t^{(1)}&:=&e_{Z_t}=\prod_{i=1}^{t}e_{n+1-2i},\, \quad\quad\quad\quad 0\leq t<(n+1)/2,\\
 f_t^{(2)}&:=&e_{\tilde{Z}_t}=\prod_{i=1}^{t}e_{n+1-2i}e_{n+1-2i}^*,\,\quad 1\leq t\leq n/2,\\
 f_t^{(3)}&:=&e_{\bar{Z}_t},\,\quad\quad\quad\quad\quad\quad\quad\quad\quad\quad  1\leq t\leq(n+1)/2,\\
  f_t^{(4)}&:=&gf_{t-1}^{(2)}, \quad\quad\quad\quad\quad\quad\quad\quad\quad 2\leq t\leq (n-1)/2,\\
  f_t^{(5)}&:=&e_0e_1f_{t-1}^{(2)}, \quad\quad\quad\quad\quad\quad\quad \quad 2\leq t\leq n/2,\\
   f_t^{(6)}&:=&e_1e_0f_{t-1}^{(2)}, \quad\quad\quad\quad\quad\quad\quad \quad 2\leq t\leq n/2,
  \end{eqnarray*}
and $f_1^{(4)}:=g$, $f_1^{(5)}:=e_0e_1$, $f_1^{(6)}:=e_1e_0$.  Furthermore,
we denote by $F$ the set of all elements $f_t^{(i)}$, and write $M =
\delta^{\Z}W(\ddB_n)FW(\ddB_n)$.
\end{Notation}

\np
The following statement is immediate from the definition of $M$.

\begin{lemma}\label{allr}
The set $M$ is closed under multiplication by  any element
from $W(\ddB_n)$.
\end{lemma}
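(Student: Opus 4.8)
The plan is to read off the claim directly from the definition $M = \delta^{\Z} W(\ddB_n) F W(\ddB_n)$ given in Notation \ref{df:fti}, exploiting two facts recorded in the setup: that $W(\ddB_n)$ is a \emph{group} (it is the submonoid of $\BrM(\ddB_n)$ generated by $r_0,\ldots,r_{n-1}$, isomorphic to the Weyl group of type $\ddB_n$, as noted just after Definition \ref{0.1}), and that $\delta^{\pm 1}$ is a central scalar, hence commutes with every element of $\Br(\ddB_n)$.

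First I would take a general element of $M$ and write it in the form $m = \delta^{k} u f v$ with $k \in \Z$, $u, v \in W(\ddB_n)$, and $f \in F$; by definition every element of $M$ has this shape. Fix an arbitrary $w \in W(\ddB_n)$; I must show both $mw \in M$ and $wm \in M$.

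For right multiplication I compute $mw = \delta^{k} u f (v w)$. Since $W(\ddB_n)$ is closed under multiplication, $vw \in W(\ddB_n)$, so $mw \in \delta^{\Z} W(\ddB_n) F W(\ddB_n) = M$. For left multiplication, using that $\delta^{k}$ is central I get $wm = \delta^{k} (w u) f v$, and $wu \in W(\ddB_n)$, so again $wm \in M$. As $m$ and $w$ were arbitrary, $M$ is closed under both left and right multiplication by elements of $W(\ddB_n)$.

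There is essentially no obstacle here: the only points that require any care are that $W(\ddB_n)$ genuinely forms a group, so that the products $vw$ and $wu$ remain inside it, and that the scalar $\delta$ may be moved freely past group elements; both are immediate from the definitions. This is precisely why the statement can be recorded as following at once from the definition of $M$.
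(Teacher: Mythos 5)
Your proof is correct and matches the paper exactly: the paper offers no written proof, noting only that the statement ``is immediate from the definition of $M$,'' and your argument is precisely the unpacking of that remark---absorbing $w$ into the $W(\ddB_n)$ factors of $\delta^{\Z}W(\ddB_n)FW(\ddB_n)$, using centrality of $\delta$ and closure of $W(\ddB_n)$ under multiplication.
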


\np Note that $f_t^{(3)}=e_0f_{t-1}^{(2)}$, for $2\leq t\leq [(n+1)/2]$.
The set $F$ is closed under the natural anti-involution $x\mapsto x^{\rm
op}$ of Proposition \ref{prop:opp}.

For $n=3$, we find $f_0^{(1)} = 1$, $f_1^{(1)} = e_2$, $f_1^{(3)} = e_0$,
$f_1^{(6)} = e_1e_0$, $f_1^{(5)} = e_0e_1$, $f_1^{(2)} = e_2e_2^* =
r_1r_2(e_1e_0e_1)r_2r_1$, $f_2^{(3)} =\delta e_0e_2$, and $f_1^{(4)} =g =
e_2r_1e_0e_1e_2$, which fits (up to conjugation by a Coxeter element) with
the eight elements of $F$ depicted in Figure \ref{imageofS}.

Part of Theorem \ref{rewrittenforms} states that each member of $M$ has a
unique decomposition, the other part states that $M$ coincides with
$\BrM(\ddB_n)$.  The first part, whose proof takes all of this section up to
the statement of the theorem, is devoted to giving a normal form for each
element of $M$, and involves a narrowing down of the possibilities for the
elements of $W(\ddB_n)$ in a normal form at both sides of $f_t^{(i)}$.  The
second part is carried out after the statements of Theorem
\ref{rewrittenforms}, where it is shown that $M$ is invariant under
multiplication by $r_i$ and $e_i$ for each $i\in\{0,\ldots,n-1\}$.  As $F =
F^{\op}$, it suffices to consider multiplication from
the left.  As $e_iuf_t^{(i)} = ue_{\beta}f_t^{(i)}$, where $\beta =
u^{-1}\beta_i$, it suffices to verify that $e_{\gamma}f_t^{(i)}$
belongs to $M$ for each $\gamma\in\Psi^+$. This is the content of Lemmas
\ref{shortef_t}--\ref{allee2r1e0e1e2f}.

\begin{defn}
\label{df:CD}
Let
$X$ be an admissible subset of $\Psi^+$. Recall the action of $W(\ddB_n)$ on all admissible subsets of $\Psi^+$ as
explained in Remark \ref{monoidaction}. The stabilizer of $X$ in
$W(\ddB_n)$ is denoted by $N(X)$.
We select a family $D_X$ of left coset representatives of $N(X)$
in $W(\ddB_n)$, so $|D_X|$ is the size of $W(\ddB_n)$-orbits of  $X$.  We simplify $D_{Z_t}$, $D_{\tilde{Z}_t}$,
$D_{\bar{Z}_t}$, $N_{Z_t}$,$N_{\tilde{Z}_t}$, $N_{\bar{Z}_t}$ to
$D_t^{(1)}$, $D_t^{(2)}$, $D_t^{(3)}$, $N_t^{(1)}$ $N_t^{(2)}$, $N_t^{(3)}$
respectively.

In order to identify the part of $W(\ddB_n)$ that commutes with $f_t^{(i)}$,
we introduce the following subgroups of $W(\ddB_n)$.
\begin{eqnarray*}
C_0^{(1)}&=&W(\ddB_n), \\
C_t^{(1)}&=&\left<r_{n-2}^*, r_0,r_1,\ldots, r_{n-2-2t}\right>,\,\qquad 1\leq t\leq n/2,\\
C_t^{(2)}&=&\left< r_1,r_2,\ldots, r_{n-2-2t}\right>,\,\qquad 1\leq t\leq n/2,\\
C_t^{(3)}&=&\left< r_2,r_3,\ldots, r_{n-2t}\right>,\,\qquad 1\leq t\leq (n+1)/2,\\
C_t^{(4)}&=&\left< r_4,r_5,\ldots, r_{n-2t}\right>,\,\qquad 1\leq t\leq (n-1)/2,\\
C_t^{(5)}=C_t^{(6)}&=&\left<r_3,r_4,\ldots, r_{n-2t}\right>,\,\qquad 1\leq t\leq n/2.
\end{eqnarray*}

For $0\leq t\leq [n/2]$, write
\begin{eqnarray*}
A_t^{(1)}&=&\left<r_{n-2i} r_{n+1-2i}  r_{n-1-2i} r_{n-2i} \right>_{i=1}^{t-1}\times\left<r_{n+1-2i}\right>_{i=1}^{t}, \\
W_t^{(1)}&=&\left< r_0,\,r_1,\ldots, r_{n-1-2t}\right>\times\left<r_{n+1-2i}^*\right>_{i=1}^{t}.
\end{eqnarray*}
 For  $1\leq t\leq [n/2]$, write
\begin{eqnarray*}
A_t^{(2)}&=&\left< r_{\epsilon_{n+2-2t}}, \,A_{t}^{(1)}\right>\times\left<r_{n+1-2i}^*\right>_{i=1}^{t}, \\
W_t^{(2)}&=&\left< r_0,\,r_1,\ldots, r_{n-1-2t}\right>.
\end{eqnarray*}
For $1\leq t\leq [(n+1)/2]$,  write
\begin{eqnarray*}
A_t^{(3)}&=&\left< r_{\epsilon_{n+4-2t}},\, r_{n-2i} r_{n+1-2i}
 r_{n-1-2i} r_{n-2i} \right>_{i=1}^{t-2}\times\left<r_{n+1-2i},r_{n+1-2i}^*\right>_{i=1}^{t-1}\times\left<r_0\right>,\\
W_t^{(3)}&=&\left< r_1r_0r_1,\,r_2,\,r_3,\ldots, r_{n+1-2t}\right>.
\end{eqnarray*}
For $1\leq t\leq (n-1)/2$, write
\begin{eqnarray*}
 A_t^{(4)}&=&\left< r_1r_0r_1,r_{\epsilon_{n+4-2t}-\epsilon_{4}}r_2r_{n+3-2t}r_{\epsilon_{n+4-2t}-\epsilon_{4}},A_{t-1}^{(1)},r_{2},r_2^*,\{r_{n+1-2i}, r_{n+1-2i}^*\}_{i=1}^{t-1},\,r_0\right>,\\
 W_t^{(4)}&=& \left<r_{\epsilon_5},  C_t^{(4)}\right>.
 \end{eqnarray*}

For $1\leq t\leq n/2$, write
\begin{eqnarray*}
 A_t^{(5)}&=&A_t^{(3)}\times\left<r_1r_0r_1\right>, \\
W_t^{(5)}&=&\left<r_2r_1r_0r_1r_2, \,r_3,\,r_4,\ldots, r_{n+1-2t} \right>.
\end{eqnarray*}
\end{defn}

\np
We first determine the structure of these subgroups.

\begin{lemma} \label{CAW} \begin{enumerate}[(i)]
\item For $1\leq t\leq [n/2]$, we have
\begin{eqnarray*}
C_{t}^{(1)}&\cong& W(\ddB_{n-2t})\times W(\ddA_{1}),\\
A_t^{(1)}&\cong& W(\ddA_{t-1})\times (W(\ddA_1))^t,\\
W_t^{(1)}&\cong& W(\ddB_{n-2t})(W(\ddA_{1}))^t.
\end{eqnarray*}
\item For $1\leq t\leq [n/2]$,  we have
\begin{eqnarray*}
C_t^{(2)}&\cong& W(\ddA_{n-1-2t}),\\
A_t^{(2)}&\cong& W(\ddB_{t})\times (W(\ddA_1))^{2t},\\
W_t^{(2)}&\cong& W(\ddB_{n-2t}).
 \end{eqnarray*}
\item For  $1\leq t\leq [(n+1)/2]$, we have
\begin{eqnarray*}
C_t^{(3)}&\cong& W(\ddA_{n-2t}),\\
A_t^{(3)}&\cong& W(\ddB_{t-1})\times (W(\ddA_1))^{2t-1}, \\
W_t^{(3)}&\cong& W(\ddB_{n+1-2t}).
\end{eqnarray*}

\item  For $1\leq t\leq (n-1)/2$,
  we have
 \begin{eqnarray*}
 A_t^{(4)}&\cong& W(\ddB_{t})\times W(\ddA_1)^{2t+1}, \\
  W_t^{(4)}&\cong& W(\ddB_{n-1-2t}),\\
  A_t^{(4)}\cap W_t^{(4)}&=&\{1\},\\
  C_t^{(4)}&\cong& W(\ddA_{n-2-2t}).
 \end{eqnarray*}
 Furthermore, the group $A_t^4$  normalizes $W_t^4$.
 \item For $1\leq t\leq n/2$, we have
 \begin{eqnarray*}
A_t^{(5)}&\cong& W(\ddB_{t-1})\times (W(\ddA_1))^{2t},\\
W_t^{(5)}&\cong& W(\ddB_{n-2t}),\\
A_t^{(5)}\cap W_t^{(5)}&=&\{1\},\\
C_t^{(5)}&\cong& W(\ddA_{n-1-2t}).
\end{eqnarray*}
Furthermore, $A_t^{(5)}$ normalizes $W_t^{(5)}$.
\end{enumerate}
\end{lemma}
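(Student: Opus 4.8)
The plan is to carry out every verification inside the standard realization of $W(\ddB_n)$ as the group of signed permutations of the coordinates $\epsilon_2,\ldots,\epsilon_{n+1}$, which is available from the description of $\Psi$ in Section \ref{admissiblerootsets}. In this model $r_0=r_{\epsilon_2}$ is the sign change on $\epsilon_2$, $r_i$ (for $i\ge 1$) is the transposition $\epsilon_{i+1}\leftrightarrow\epsilon_{i+2}$, a starred generator $r_\beta^{*}$ with $\beta=\epsilon_k-\epsilon_l$ is the negate-and-swap reflection in $\epsilon_k+\epsilon_l$, and a short-root generator $r_{\epsilon_j}$ is the sign change on $\epsilon_j$. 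Thus every listed generator is a reflection in an explicitly computable root (or, for a conjugate $r_ar_br_cr_a$ with $\beta_b\perp\beta_c$, a product of two commuting reflections). Since a subgroup of a Coxeter group generated by reflections is again a Coxeter group with a canonical simple system, it suffices, for each subgroup, to identify the root subsystem $\Phi'$ generated by these roots, extract its simple system, and read off the Dynkin diagram; the direct-product assertions then amount to splitting $\Phi'$ into mutually orthogonal irreducible components.

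First I would dispose of the parabolic-type subgroups $C_t^{(i)}$ and $W_t^{(i)}$. By design (Definition \ref{df:CD}) the $C_t^{(i)}$ centralize $f_t^{(i)}$, so in the coordinate picture they act on the block of coordinates left untouched by the mate pairs constituting $Z_t$, $\tilde Z_t$, $\bar Z_t$: a chain of consecutive $r_j$'s that includes $r_0$ gives a type $\ddB$ factor on that block, a chain avoiding $r_0$ gives type $\ddA$, and the rank is read off directly from the set of coordinates moved. The $W_t^{(i)}$ are treated the same way once their non-simple heads are rewritten as short-root reflections; for instance $r_1r_0r_1=r_{\epsilon_3}$ and $r_2r_1r_0r_1r_2=r_{\epsilon_4}$, which exhibits $W_t^{(3)}$ and $W_t^{(5)}$ as genuine type-$\ddB$ parabolics on a shifted block of coordinates.

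The substance lies in the groups $A_t^{(i)}$. Here I would compute, coordinate by coordinate, the roots underlying the mate generators $r_{n+1-2i}^{*}$, the short-root reflections $r_{\epsilon_j}$, and the long conjugates such as $r_{\epsilon_{n+4-2t}-\epsilon_4}\,r_2\,r_{n+3-2t}\,r_{\epsilon_{n+4-2t}-\epsilon_4}$. The expected pattern is that these roots organise into $t$ (or $t-1$) mutually orthogonal mate pairs $\{\epsilon_a-\epsilon_b,\epsilon_a+\epsilon_b\}$, whose swap-reflections realise a type $\ddB_t$ (or $\ddB_{t-1}$) factor, together with the individual sign-change and orthogonal-mate reflections inside each pair that contribute the $(W(\ddA_1))^{\bullet}$ factors, the long conjugates serving precisely to permute the pairs. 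Establishing the isomorphism type then reduces to verifying the Coxeter relations among the canonical reflections and the pairwise orthogonality responsible for the direct product. As an independent numerical check, I would match the orders $|A_t^{(i)}|$ against the $W(\ddB_n)$-orbit sizes of $Z_t,\tilde Z_t,\bar Z_t$ recorded in Lemma \ref{cardi} via orbit--stabiliser.

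Finally, the intersection statements $A_t^{(i)}\cap W_t^{(i)}=\{1\}$ and the normalisation claims follow from the coordinate bookkeeping already assembled: $A_t^{(i)}$ acts on the coordinates indexed by its chosen mate pairs and distinguished short root, while $W_t^{(i)}$ is supported on the complementary block of coordinates, so their only common element is the identity; and conjugation by a generator of $A_t^{(i)}$ merely permutes and negates coordinates within the $A_t$-block, hence stabilises the reflection set defining $W_t^{(i)}$. I expect the main obstacle to be exactly this bookkeeping for the exotic generators of $A_t^{(3)}$, $A_t^{(4)}$ and $A_t^{(5)}$: pinning down which root each long conjugate reflects in and confirming that the supports split orthogonally as claimed. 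Once the roots are identified, every assertion reduces to orthogonality and disjoint-support computations.
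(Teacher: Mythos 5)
Your chosen model of $W(\ddB_n)$ as signed permutations of the coordinates $\epsilon_2,\ldots,\epsilon_{n+1}$ is, in substance, the same concrete realization the paper uses: its entire proof is one sentence invoking the diagram representation and the analogy with \cite[Lemma 6.4]{CLY2010}, and the diagrams of Weyl group elements are exactly signed permutations (decorations being sign changes). For the $C_t^{(i)}$ and $W_t^{(i)}$ your plan is sound: these are honest reflection subgroups, the rewritings $r_1r_0r_1=r_{\epsilon_3}$ and $r_2r_1r_0r_1r_2=r_{\epsilon_4}$ are correct, and the generating roots split into a consecutive block of type $\ddB$ or $\ddA$ plus mutually orthogonal extra roots, so types, intersections and the disjoint-support arguments go through as you describe.

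The genuine gap is in your treatment of the $A_t^{(i)}$. The elements $r_{n-2i}r_{n+1-2i}r_{n-1-2i}r_{n-2i}$ are not reflections but blockwise swaps of two mate pairs (products of two commuting reflections), and your key step --- ``identify the root subsystem $\Phi'$ generated by these roots, extract its simple system, and read off the Dynkin diagram'' --- computes the wrong group: the reflection group generated by the underlying roots (the coordinatewise pair-connecting roots together with the within-pair roots $\epsilon_{b_i}-\epsilon_{a_i}$) is the full symmetric group on the $2t$ pair coordinates, of order $(2t)!$, whereas $A_t^{(1)}$ has order $2^t\,t!$; a group generated by products of reflections need not be a reflection subgroup, and this one is not, so Deodhar--Dyer theory does not apply to it. For the same reason your criterion ``direct product $\Leftrightarrow$ orthogonal splitting'' cannot be made to work here: as your own phrase ``the long conjugates serving precisely to permute the pairs'' concedes, for $s_1=r_{n-2}r_{n-1}r_{n-3}r_{n-2}$ one has $s_1 r_{n-1} s_1^{-1}=r_{n-3}$, so the two displayed factors of $A_t^{(1)}$ normalize but do not centralize each other --- already for $t=2$ the group $\left\langle s_1,r_{n-1},r_{n-3}\right\rangle$ is dihedral of order $8$, not elementary abelian of order $8$. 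The decompositions of the $A$-groups are internal (semidirect, wreath-type) factorizations, and what the paper actually uses downstream is only the resulting orders, which feed Lemma \ref{normalizer} and Table \ref{tab:1}. The viable repair of your plan is precisely the part you relegated to a ``sanity check'': treat the pair-swaps as block permutations, compute $|A_t^{(i)}|$ directly from that action, and match $|A_t^{(i)}||W_t^{(i)}|$ against the orbit sizes of Lemma \ref{cardi} by orbit--stabilizer --- which is essentially what the paper's diagrammatic one-line argument amounts to. As written, the orthogonality-based portion of your proposal would fail at exactly the groups this lemma is really about.
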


\begin{proof}
With the diagram representation of Figure \ref{fig:4}, the
argument is analogous to \cite[Lemma 6.4]{CLY2010}.
\end{proof}

\begin{lemma} \label{normalizer}
For $i=1$, $2$, $3$, the subgroup $N_{t}^{(i)}$
 in $W(\ddB_n)$ is a  semiproduct of
$W_t^{(i)}$ and  $A_{t}^{(i)}$.
\end{lemma}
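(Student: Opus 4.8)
The plan is to prove, for each $i\in\{1,2,3\}$, the internal decomposition $N_t^{(i)}=W_t^{(i)}\rtimes A_t^{(i)}$ with $W_t^{(i)}$ as the normal factor, by verifying four things: that $A_t^{(i)}$ and $W_t^{(i)}$ are both subgroups of $N_t^{(i)}$; that $A_t^{(i)}\cap W_t^{(i)}=\{1\}$; that $A_t^{(i)}$ normalizes $W_t^{(i)}$, so that $W_t^{(i)}A_t^{(i)}$ is a subgroup; and finally that $|W_t^{(i)}|\cdot|A_t^{(i)}|=|N_t^{(i)}|$, which forces $W_t^{(i)}A_t^{(i)}=N_t^{(i)}$. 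Recall that $N_t^{(i)}$ is by definition the stabilizer in $W(\ddB_n)$ of a representative admissible set from one of the three families $Z_t$, $\tilde Z_t$, $\bar Z_t$ of Proposition \ref{classification}.

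For the containments I would check that every listed generator of $A_t^{(i)}$ and of $W_t^{(i)}$ stabilizes the relevant set under the $W(\ddB_n)$-action of Remark \ref{monoidaction}, reading the action off the coordinate description of the roots. The reflections $r_{n+1-2i}$ and the mate reflections $r_{n+1-2i}^*$ fix each root of the set or its orthogonal mate; the braided elements $r_{n-2i}r_{n+1-2i}r_{n-1-2i}r_{n-2i}$ permute the long roots (and their mates) among themselves; the short-root reflections and, in type $(3)$, the generator $r_0$ fix the short root $\beta_0=\epsilon_2$; and the remaining generators $r_0,\dots,r_{n-1-2t}$ (respectively their type-$(2)$ and type-$(3)$ analogues, including $r_1r_0r_1$ at the short end) are supported on coordinates orthogonal to the span of the set, hence fix it. These are exactly the moves pictured in Figure \ref{fig:4}.

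For the triviality of the intersection and for normality I would argue from the coordinate supports exhibited in Lemma \ref{CAW}, distinguishing the cases. In types $(2)$ and $(3)$ the subgroups $W_t^{(2)}\cong W(\ddB_{n-2t})$ and $W_t^{(3)}\cong W(\ddB_{n+1-2t})$ are supported on coordinates disjoint from those spanning the set and its mates, on which $A_t^{(i)}$ is supported; hence $W_t^{(i)}$ and $A_t^{(i)}$ centralize one another and meet trivially, and the product is in fact direct. In type $(1)$ the situation is genuinely semidirect: on the $2t$ coordinates of $Z_t$ the group $A_t^{(1)}$ acts by unsigned coordinate permutations, whereas the mate reflections generating the $(W(\ddA_1))^t$-factor of $W_t^{(1)}$ act there by sign-changing swaps, so an element of the intersection would be simultaneously an unsigned permutation and a product of sign changes, forcing it to be trivial. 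Conjugating a mate reflection $r_{\beta^*}$ by a permutation generator of $A_t^{(1)}$ yields another mate reflection $r_{(s\beta)^*}$, while the $W(\ddB_{n-2t})$-part of $W_t^{(1)}$ (supported off the $Z_t$-coordinates) is centralized; thus $A_t^{(1)}$ normalizes $W_t^{(1)}$. This is the same bookkeeping already carried out for $A_t^{(4)}$ and $A_t^{(5)}$ in Lemma \ref{CAW} and, in an analogous setting, in \cite[Lemma 6.4]{CLY2010}.

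The decisive step is the order count. By orbit--stabilizer $|N_t^{(i)}|=|W(\ddB_n)|/|\mathrm{orbit}|$, and the three orbit sizes are supplied by Lemma \ref{cardi}; substituting $|W(\ddB_m)|=2^m m!$ together with the double-factorial convention used there and in the rank formula $f(n)$, a direct computation gives $|N_t^{(1)}|=2^n(n-2t)!\,t!$, $|N_t^{(2)}|=2^{n+t}(n-2t)!\,t!$, and $|N_t^{(3)}|=2^{n+t-1}(n+1-2t)!\,(t-1)!$. On the other side, the orders of $A_t^{(i)}$ and $W_t^{(i)}$ are read directly from the isomorphism types in Lemma \ref{CAW}, and in each of the three cases their product matches $|N_t^{(i)}|$ on the nose. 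I expect the genuine labor to lie in the normalization bookkeeping of the previous paragraph, especially the interaction of $r_0$ and $r_1r_0r_1$ with the mates in types $(2)$ and $(3)$; but this introduces no idea beyond Lemma \ref{CAW}, and the only step that truly pins down $N_t^{(i)}=W_t^{(i)}\rtimes A_t^{(i)}$ rather than a proper subgroup is the order comparison.
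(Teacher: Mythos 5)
Your proposal is correct and follows essentially the same route as the paper: the paper's own (very terse) proof establishes the semidirect structure of $A_t^{(i)}W_t^{(i)}$ via the diagram representation of Figure \ref{fig:4} and then identifies this product with the full stabilizer $N_t^{(i)}$ by Lagrange's Theorem together with the orbit sizes of Lemma \ref{cardi} --- exactly your generator-level bookkeeping followed by the decisive orbit--stabilizer order comparison. Your computations ($|N_t^{(1)}|=2^n t!(n-2t)!$, $|N_t^{(2)}|=2^{n+t}t!(n-2t)!$, $|N_t^{(3)}|=2^{n+t-1}(t-1)!(n+1-2t)!$, each matching $|A_t^{(i)}|\cdot|W_t^{(i)}|$ from Lemma \ref{CAW}) check out, so your write-up is simply a detailed expansion of the paper's argument.
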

\begin{proof}
The semidirect group structure of these subgroups can be proved by use of
the diagram representation of Figure \ref{fig:4}.
The normalizer (stabilizer) claims follow from Lagrange's Theorem and  Lemma \ref{cardi}.
\end{proof}

\begin{defn}\label{df:Ns}
We need a few more subgroups and coset representatives.  For $i=4$, $5$, let
$N_t^{(i)}= A_t^{(i)} W_t^{(i)}$, and $D_t^{(i)}$ be its left coset
representatives in $W(\ddB_n)$.  The sets $\tilde{Z}_t$ and
$\tilde{Z}_{t-1}\cup\{\beta_1,\beta_1^*\}$ are conjugate by
$\tau=r_{\epsilon_{n+2-2t}-\epsilon_3}r_1r_{n+1-2t}r_{\epsilon_{n+2-2t}-\epsilon_3}$,
and $\tau C_t^{(2)}\tau^{-1}= C_t^{(6)}$.  At the same time we have $\tau
f^{(2)}_t\tau^{-1} =\delta e_{1}f^{(2)}_{t-1} $.  Let $N_t^{(6)}$ be the
stabilizer of $\tilde{Z}_{t-1}\cup\{\beta_1,\beta_1^*\}$ in $W(\ddB_n)$ and
$D_t^{(6)}$ be a set of left coset representatives of $N_t^{(6)}$ in
$W(\ddB_n)$. Let $ A_t^{(6)}=\tau A_t^{(2)}\tau^{-1}$ and $W_t^{(6)}=\tau
W_t^{(2)}\tau^{-1}$.

Finally, for $i=1$, $2$, $3$, $4$, let
$D_{t,L}^{(i)}=D_{t,R}^{(i)}=D_t^{(i)}$,
$D_{t,L}^{(5)}=D_{t,R}^{(6)}=D_t^{(5)}$, and
$D_{t,L}^{(6)}=D_{t,R}^{(5)}=D_t^{(6)}$.
\end{defn}

\begin{prop}\label{first3cases}
In $\Br(\ddB_n)$,  the following properties hold for all $i\in\{1,\ldots,6\}$.
\begin{enumerate}[(i)]
\item
For each $x\in N_t^{(i)}$  we have
$x f_t^{(i)}=f_t^{(i)}  x$.
\item
For each $a\in A_t^{(i)}$, we have
$a f_t^{(i)}=f_t^{(i)}$.
\item
For each $b\in W_t^{(i)}$
there exists some  $c\in C_t^{(i)}$,  such that
$bf_t^{(i)}=cf_t^{(i)}$.
\item
For each $c\in C_t^{(i)}$ we have
$cf_t^{(i)}=cf_t^{(i)}$.
\end{enumerate}
As a result,
\label{propM}
each monomial in $M$ can be written in the normal form
$$uf_t^{(i)}vw,$$
for some $u\in D_{t,L}^{(i)}$, $w\in (D_{t,R}^{(i)})^{\op}$,  $v\in C_t^{(i)}$, and $i\in\{1, \ldots, 6\}$.
\end{prop}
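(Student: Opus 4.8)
The plan is to prove the four numbered assertions for each $i\in\{1,\ldots,6\}$ and then to assemble the normal form by a coset bookkeeping argument. Throughout I would lean on three tools: the conjugation formula $we_\beta w^{-1}=e_{w\beta}$ for $w\in W(\ddB_n)$ (a consequence of Lemma \ref{lm:SimpleRootRels}), the commutation of the $e$'s over an admissible set (Lemma \ref{2orthogonalroots}), and the structural descriptions of $A_t^{(i)}$, $W_t^{(i)}$, $C_t^{(i)}$, $N_t^{(i)}$ from Lemmas \ref{CAW}, \ref{normalizer} and Definition \ref{df:Ns}. For the three symmetric families $i=1,2,3$ we have $f_t^{(i)}=e_X$ with $X$ one of $Z_t,\tilde Z_t,\bar Z_t$, so assertion (i) is immediate: since $N_t^{(i)}=\mathrm{Stab}(X)$, each $x\in N_t^{(i)}$ fixes $X$ setwise, whence $xe_Xx^{-1}=e_{xX}=e_X$, i.e. $xf_t^{(i)}=f_t^{(i)}x$. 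For the exotic families $i=4,5,6$ I would use the factorizations $f_t^{(4)}=gf_{t-1}^{(2)}$, $f_t^{(5)}=e_0e_1f_{t-1}^{(2)}$, $f_t^{(6)}=e_1e_0f_{t-1}^{(2)}$ together with the conjugacy $\tau f_t^{(2)}\tau^{-1}=\delta e_1f_{t-1}^{(2)}$ of Definition \ref{df:Ns}, verifying (i) on the generators of $A_t^{(i)}W_t^{(i)}$; the commutations among $g$, $e_0e_1$ and the Weyl generators needed here are exactly those supplied by Lemma \ref{e2r1e0e1e2}. Assertion (iv) is a tautology as stated and serves only to record that $C_t^{(i)}$ undergoes no further collapse.

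Assertion (ii) I would check on a generating set of $A_t^{(i)}$, which has two kinds of generators. A reflection $r_\beta$ with $\beta$ in the top root set $X$ of $f_t^{(i)}$ is absorbed outright: by (\ref{0.1.8}) (after conjugating to simple roots) $r_\beta$ commutes with every $e_\gamma$, $\gamma\in X\setminus\{\beta\}$, while $r_\beta e_\beta=e_\beta$ by (\ref{0.1.4}), so $r_\beta f_t^{(i)}=f_t^{(i)}$. The remaining generators are the braid elements $r_{n-2i}r_{n+1-2i}r_{n-1-2i}r_{n-2i}$ (and their mates), which one computes to act on roots as the transposition swapping two members of $X$; diagrammatically (Figure \ref{fig:4}) this merely permutes two cups of $f_t^{(i)}$ among themselves and hence fixes the diagram, and algebraically the same follows by iterating (\ref{0.1.13}) and (\ref{0.1.15}). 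For $i=4,5,6$ the extra generators attached to the $g$- or $e_0e_1$-block are absorbed using the opposition invariance (\ref{g=gop}) and the relations (\ref{g=gop1})--(\ref{g=gop2}).

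Assertion (iii) is where the real work lies, and I expect it to be the main obstacle. Here $W_t^{(i)}$ is strictly larger than $C_t^{(i)}$ (compare the isomorphism types in Lemma \ref{CAW}), and each generator $b$ of $W_t^{(i)}$ must be converted into some $c\in C_t^{(i)}$ modulo left multiplication by $f_t^{(i)}$. The generators already lying in $C_t^{(i)}$ are harmless; the genuinely new ones are the orthogonal-mate reflections $r_\beta^{*}$ and the distinguished type-$\ddB$ reflections ($r_0$, $r_1r_0r_1$, $r_2r_1r_0r_1r_2$, and so on). For these I would invoke the mate identities of Lemma \ref{ee*}, in particular $e_ie_i^{*}=e_ie_{i-1}\cdots e_1e_0e_1\cdots e_i$ and $r_0e_ie_i^{*}=e_ie_i^{*}$, to rewrite $bf_t^{(i)}$ so that the mate reflection is traded for a short chain of $e$'s which is then folded back into $f_t^{(i)}$, leaving only a factor in $C_t^{(i)}$. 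For $i=4,5,6$ the block $g=e_2r_1e_0e_1e_2$ sits inside $f_t^{(i)}$, so the conversions additionally require the collapse formulas (\ref{errreree3})--(\ref{errreree4}); carrying these out while tracking the emerging powers of $\delta$ is the delicate part of the whole proposition.

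Finally I would assemble the normal form. Given a monomial $\delta^k u'f_t^{(i)}v'$ of $M$, first split $u'=un$ with $u\in D_{t,L}^{(i)}$ a left coset representative of $N_t^{(i)}$ and $n\in N_t^{(i)}$; pushing $n$ through $f_t^{(i)}$ by (i) gives $\delta^k uf_t^{(i)}(nv')$. To treat the right-hand factor $q:=nv'$ I pass to the opposite, using $F=F^{\op}$ and $(f_t^{(i)})^{\op}=f_t^{(i^{\op})}$ (where $i^{\op}$ interchanges $5$ and $6$ and fixes the rest): writing $q^{\op}=u_2n_2$ with $u_2\in D_{t,L}^{(i^{\op})}$ and decomposing $n_2=b_2a_2\in W_t^{(i^{\op})}A_t^{(i^{\op})}$ (possible since $A$ normalizes the normal subgroup $W$), assertions (ii) and (iii) give $q^{\op}(f_t^{(i)})^{\op}=u_2b_2a_2(f_t^{(i)})^{\op}=u_2c_2(f_t^{(i)})^{\op}$ with $c_2\in C_t^{(i^{\op})}$. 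Applying $\op$, and using that the generators of $C_t^{(i)}$ are self-opposed while $C_t^{(5)}=C_t^{(6)}$, turns this into $f_t^{(i)}q=f_t^{(i)}vw$ with $v=c_2^{\op}\in C_t^{(i)}$ and $w=u_2^{\op}\in (D_{t,R}^{(i)})^{\op}$, the membership being exactly the identity $D_{t,L}^{(i^{\op})}=D_{t,R}^{(i)}$ of Definition \ref{df:Ns}. Hence $\delta^k u'f_t^{(i)}v'=\delta^k uf_t^{(i)}vw$, the asserted normal form, with the power of $\delta$ carried along in the $\delta^{\Z}$-factor of $M$.
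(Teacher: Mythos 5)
Your proposal is correct and takes essentially the same route as the paper's own proof: a generator-by-generator verification of (i)--(iii) using the conjugation formula $we_\beta w^{-1}=e_{w\beta}$, the braid-swap identity $r_ir_{i-1}r_{i+1}r_ie_{i-1}e_{i+1}=e_{i-1}e_{i+1}$, Lemma \ref{ee*} and Lemma \ref{e2r1e0e1e2}, followed by the same coset assembly in which the right-hand factor is treated via the opposition involution, with $(f_t^{(5)})^{\op}=f_t^{(6)}$, $C_t^{(5)}=C_t^{(6)}$ and $D_{t,L}^{(6)}=D_{t,R}^{(5)}$ playing exactly the roles you assign them. The only cosmetic deviations are that the paper derives (i) for $i\in\{4,5,6\}$ from (ii)--(iv) rather than checking it directly on generators, and that for $f_t^{(1)}$ the surplus mate reflections $r^*_{n+1-2i}$ are handled by $A_t^{(1)}$-conjugation to the single mate reflection already in $C_t^{(1)}$ (the $e_ie_i^*$-absorption of Lemma \ref{ee*} only applies when the mates actually occur in the top set, i.e.\ for $i=2,3$), neither of which changes the substance of your argument.
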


\begin{proof} (i). For $i\in\{1,2,3\}$,
this follows from Definition \ref{e} and Lemma \ref{normalizer}.  For
$i\in\{4,5,6\}$, observe that $N_t^{(i)}$ is the (semi-direct) product of
$W_t^{(i)}$ and $A_t^{(i)}$, so (i) will follow from (ii), (iii), and (iv).

\nl(ii). We use the following three equalities.
\begin{eqnarray}
r_ir_{i-1}r_{i+1}r_{i}e_{i-1}e_{i+1}&=&e_{i-1}e_{i+1},\quad \rm{for} \quad \, i>1 \label{rrrree}\\
r_0e_ie_i^*&=&e_ie_i^*,\\
r_ie_i&=&e_i. \label{re}
\end{eqnarray}
The first holds as
\begin{eqnarray*}
r_ir_{i-1}r_{i+1}r_{i}(e_{i-1}e_{i+1})&\overset{(\ref{0.1.9})}{=}&r_ir_{i-1}(r_{i+1}r_ie_{i+1})e_{i-1}\\
&\overset{(\ref{0.1.13})}{=}&(r_ir_{i-1}e_i)(e_{i+1}e_{i-1})\\
&\overset{(\ref{0.1.13})}{=}&(e_{i-1}e_ie_{i-1})e_{i+1}\\
&\overset{(\ref{3.1.5})}{=}&e_{i-1}e_{i+1},
 \end{eqnarray*}
the second equality is from Lemma \ref{ee*}, and  the third equality follows from  definition.
Now for the proof that the lemma holds for each generator, the formulas
(\ref{rrrree})--(\ref{re})  or their conjugations can cover all possible cases.

\nl(iii). The difference of generators of $W_t$ and $C_t$ is made up of
$\{r_{n+1-2i}^*\}_{i=2}^{t}$, which are conjugate to $r_{n-1}^*$ by some
elements in $A_{t}$, for example
$$r_{n-2}r_{n-3}r_{n-1}r_{n-2}\beta_{n-3}^*=\beta_{n-1}^*.$$ Therefore (iii)
for $f_t^{(1)}$ follows from (ii).  We can derive it for $f_t^{(2)}$ and
$f_t^{(3)}$ by applying Lemma \ref{ee*}.

For $i=4$, recall that $g=e_2r_1e_0e_1e_2$. By use of Lemma \ref{e2r1e0e1e2}
and
$$r_{\epsilon_{n+4-2t}-\epsilon_{4}} r_2r_{n+3-2t}r_{\epsilon_{n+4-2t}-\epsilon_{4}}e_2e_{n+3-2t}\overset{(\ref{rrrree})}{=}e_2e_{n+3-2t},$$
 the (ii) and (iii) about  $f_t^{(4)}$ can be obtained.

For $i=5$, the (ii) holds in view of $r_1r_0r_1 e_0=e_0$ and the argument of
 (ii) for $f_t^{(3)}$; the(iii) holds because of
$$r_2r_1r_0r_1r_2e_0\overset{(\ref{0.1.8})}{=}r_2(r_1r_0r_1e_0)r_2\overset{(\ref{0.1.14})}{=}r_2e_0r_2\overset{(\ref{0.1.8})}{=}e_0$$
  and (iii) for $f_t^{(3)}$.

When $i=6$, (ii) and (iii) hold naturally for (ii) and (iii) of $f_t^{(2)}$.

As for the final statement, Note that, if $w\in W$, we can write $w = vn$
with $v\in D_t^{(i)}$ and $n\in N_t^{(i)}$; but $n = ba$ with $b\in
W_t^{(i)}$ and $a\in A_t^{(i)}$; so, by (iii), $wf_t^{(i)} = vbf_t^{(i)} =
vf_t^{(i)}c$ for some $c\in C_t^{(i)}$.  Using the opposition involution of
Proposition \ref{prop:opp}, we can finish for $i = 1,2,3,4$ as
$\left(f_t^{(i)}\right)^{\op} = f_t^{(i)}$.  The other two cases can be
treated similarly, so we restrict ourselves to $i = 5$.

Applying the results obtained so far to
image under opposition of $f_t^{(5)}z$ for $z\in W$,
we find $u\in D_t^{(6)} = D_{t,R}^{(5)}$
and $d\in  C_t^{(6)}$
such that
$(f_t^{(5)}z)^{\op} = z^{-1} f_t^{(6)} =
u^{-1}f_t^{(6)}d^{-1}
$, so
$f_t^{(5)}z = df_t^{(5)} u$. As
$d\in  C_t^{(6)} =  C_t^{(5)} $ (see  Definition \ref{df:CD}),
this expression blends well with
$vf_t^{(5)}c$ for $wf_t^{(5)}$ to give the required normal form
for $wf_t^{(5)}z$.

\nl(iv). This follows from a straightforward check for each the generators
of $C_t^{(i)}$.
\end{proof}

\np We now come to the complete normal forms result by replacing $M$ in the
last statement of Proposition \ref{propM} with $\BrM(\ddB_n)$.

\begin{thm}\label{rewrittenforms}Up to powers of $\delta$,
each monomial in $\BrM(\ddB_n)$ can be written in  the  normal  form
$$uf_t^{(i)}vw,$$ for some $u\in D_{t,L}^{(i)}$, $w\in
(D_{t,R}^{(i)})^{\op}$, $v\in C_t^{(i)}$, and $f_t^{(i)}\in F$.
\end{thm}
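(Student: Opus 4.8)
The plan is to prove that $\BrM(\ddB_n)$ and $M$ coincide up to powers of $\delta$, and then to invoke Proposition \ref{propM}, which already produces the normal form $uf_t^{(i)}vw$ for every element of $M$. Since $F\subseteq\BrM(\ddB_n)$ and $W(\ddB_n)\subseteq\BrM(\ddB_n)$, the inclusion $M\subseteq\BrM(\ddB_n)$ is immediate from the definition $M=\delta^{\Z}W(\ddB_n)FW(\ddB_n)$. The substance of the theorem is therefore the reverse inclusion $\BrM(\ddB_n)\subseteq M$. As $1=f_0^{(1)}\in M$ and every monomial is a product of the generators $r_i,e_i$, it suffices to show that $M$ is closed under left multiplication by each such generator: building any monomial from the right end leftwards then keeps us inside $M$ at every step.

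Closure under left multiplication by $r_i$ is exactly Lemma \ref{allr}, so only the $e_i$ require work. Because $F=F^{\op}$ (whence $M=M^{\op}$), it is enough to treat left multiplication. Given an element of $M$ in the form $uf_t^{(j)}vw$ with $u\in W(\ddB_n)$, I would move $e_i$ to the right past $u$ using the conjugation relation $e_iu=ue_{\beta}$ with $\beta=u^{-1}\beta_i$, which holds by the well-definedness of $e_\beta$ established through Lemma \ref{lm:SimpleRootRels}. Since $M$ is stable under left and right multiplication by $W(\ddB_n)$ (Lemma \ref{allr}), the whole question then collapses to the single assertion
$$e_\gamma f_t^{(j)}\in M\qquad\text{for every }\gamma\in\Psi^+\text{ and every }f_t^{(j)}\in F.$$
This is precisely the statement to be verified in the subsequent Lemmas \ref{shortef_t}--\ref{allee2r1e0e1e2f}, organized according to whether $\gamma$ is a short or a long root and according to the type $j$ of the cell element.

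I would structure that case analysis around the monoid action on admissible sets recalled in Remark \ref{monoidaction}. Each $f_t^{(j)}$ equals $e_X$ for an admissible set $X$ (for $j=1,2,3$), or such a product preceded by a fixed prefix ($g$ for $j=4$, $e_0e_1$ for $j=5$, $e_1e_0$ for $j=6$), and left multiplication by $e_\gamma$ is governed by the action of $E_\gamma$ on $X$. When $\gamma$ lies in $X$ or is orthogonal to $X$, the product is $\delta^k e_{\overline{X\cup\{\gamma\}}}$, which by the relation $e_{\overline{X}}=\delta^{|\overline{X}\setminus X|}e_X$ and Lemma \ref{2orthogonalroots} is a cell element up to $\delta$-powers and $W(\ddB_n)$-conjugacy, hence in $M$; when $\gamma$ is linked to some root of $X$, the $R_\beta R_i$ prescription of the action rewrites $e_\gamma e_X$ as $w'e_{X'}$ with $w'\in W(\ddB_n)$ and $X'$ admissible, again landing in $M$. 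The identities of Lemma \ref{ee*} supply the algebraic mechanism for handling the starred generators $e_i^*$, while Lemma \ref{e2r1e0e1e2} handles the prefix $g$ in $f_t^{(4)}$.

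The main obstacle is the number of interaction cases, made genuinely harder by the cells $f_t^{(4)}$, $f_t^{(5)}$, $f_t^{(6)}$, whose prefixes $g$, $e_0e_1$, $e_1e_0$ are not themselves of the form $e_X$ and so do not transform transparently under the admissible-set action. For these I expect to rely on the dedicated identities (\ref{g=gop1})--(\ref{g=gop3}) and (\ref{errreree1})--(\ref{errreree4}) to absorb $e_\gamma$ into the prefix or to convert the product back into a long-root cell times a Weyl-group element. Once $e_\gamma f_t^{(j)}\in M$ is verified in all cases, closure of $M$ under every generator follows, giving $\BrM(\ddB_n)=M$ up to powers of $\delta$; Proposition \ref{propM} then yields the normal form $uf_t^{(i)}vw$ for every monomial of $\BrM(\ddB_n)$, completing the proof.
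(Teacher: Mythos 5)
Your proposal is correct and takes essentially the same route as the paper: the paper likewise reduces Theorem \ref{rewrittenforms} to Proposition \ref{propM} plus closure of $M$ under the generators, handles the $r_i$ by Lemma \ref{allr}, uses $F=F^{\op}$ to restrict to left multiplication, and conjugates via $e_iu = ue_{u^{-1}\beta_i}$ to reduce everything to the verification $e_\gamma f_t^{(j)}\in M$ carried out in Lemmas \ref{shortef_t}--\ref{allee2r1e0e1e2f}. Even your sketch of the case analysis (the admissible-set action for $j=1,2,3$, and the dedicated identities of Lemmas \ref{ee*} and \ref{e2r1e0e1e2} for the prefixes $g$, $e_0e_1$, $e_1e_0$) matches how those lemmas are actually proved in the paper.
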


\np In view of Proposition \ref{propM} and Lemma \ref{allr}, for the
proof of the theorem it remains to consider the products of the form
$e_{\beta}f_t^{(i)}$. This is done in Lemmas
\ref{shortef_t}---\ref{allee2r1e0e1e2f}.

\begin{lemma}\label{shortef_t}
For $f_t^{(1)}$ and $f_t^{(2)}$, the following statements hold.
\begin{enumerate}[(i)]
\item If $i<n-2t$, then there exist $r,s \in W(\ddB_n)$  such that
 \begin{eqnarray*}
 e_{\epsilon_{i+2}}f_t^{(1)} &=&\delta^{t} r f_{t+1}^{(3)} r^{-1}\qquad
\mbox{and}\qquad
 e_{\epsilon_{i+2}}f_t^{(2)} = s f_{t+1}^{(3)} s^{-1}.
 \end{eqnarray*}
\item
If $i\geq n-2t$, then there exists $s\in W(\ddB_n)$ such that
 \begin{eqnarray*}
 e_{\epsilon_{i+2}}f_t^{(1)} &=&\delta^{t-1} s  f_t^{(5)}s^{-1}\qquad
 \mbox{and}
\qquad
 e_{\epsilon_{i+2}}f_t^{(2)} =s  f_t^{(5)} s^{-1}.
 \end{eqnarray*}
\end{enumerate}
As a consequence, for each short root $\beta$ of $\Psi^+$ and each
$i\in\{1,2\}$, the monomial $e_\b f_t^{(i)}$ belongs to $M$.
\end{lemma}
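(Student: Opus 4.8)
The plan is to read each product off the action of $\BrM(\ddB_n)$ on the admissible subsets of $\Psi^+$ (Remarks \ref{rm:cA} and \ref{monoidaction}) and then to pin down the scalar and the conjugating Weyl element by the relations. Write $\beta=\epsilon_{i+2}$ for the short root in question; since the short roots of $\Psi^+$ are exactly $\{\epsilon_j\}_{j=2}^{n+1}=\{\epsilon_{i+2}\}_{i=0}^{n-1}$, establishing (i) and (ii) for all admissible $i$ settles the consequence for every short root. The top of $f_t^{(1)}$ (resp.\ $f_t^{(2)}$) is $Z_t$ (resp.\ $\tilde{Z}_t$), whose long roots occupy the indices $\epsilon_{n+2-2t},\ldots,\epsilon_{n+1}$; hence the dichotomy $i<n-2t$ versus $i\ge n-2t$ is precisely the dichotomy ``$\epsilon_{i+2}$ is orthogonal to the whole block'' versus ``$\epsilon_{i+2}$ meets the block.'' Throughout, the freedom to conjugate rests on the well-definedness $we_\gamma w^{-1}=e_{w\gamma}$ recorded via Lemma \ref{lm:SimpleRootRels}.

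For part (i), when $i<n-2t$ the root $\epsilon_{i+2}$ is orthogonal to every root of $Z_t$ and of $\tilde{Z}_t$, so by Lemma \ref{2orthogonalroots} the element $e_{\epsilon_{i+2}}$ commutes with each factor and the products equal $e_X$ with $X=\{\epsilon_{i+2}\}\cup Z_t$, respectively $X=\{\epsilon_{i+2}\}\cup\tilde{Z}_t$. By Proposition \ref{classification} the admissible closure $\overline{X}$ is in both cases of type (3), lying in the $W(\ddB_n)$-orbit of $\bar{Z}_{t+1}$, so $e_{\overline{X}}=r f_{t+1}^{(3)} r^{-1}$ for the Weyl element $r$ carrying $\bar{Z}_{t+1}$ to $\overline{X}$. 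For $f_t^{(2)}$ the set $X$ is already admissible ($\overline{X}=X$), so $e_X=e_{\overline{X}}$ with no power of $\delta$; for $f_t^{(1)}$ the closure adjoins the $t$ orthogonal mates of the long roots, and the admissible-closure lemma (whose model case is the identity $e_0e_2e_2^{*}=\delta e_0e_2$ proved before the statement) converts this into the recorded power of $\delta$.

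Part (ii) is the substantial case, and is where I expect the real work to lie: here $\epsilon_{i+2}$ is not orthogonal to the block, so $e_{\epsilon_{i+2}}f_t^{(1)}$ is \emph{not} a plain product of commuting idempotents and the ``commute and close'' argument breaks down. My plan is to conjugate by a suitable element of $\langle r_1,\ldots,r_{n-1}\rangle\cong W(\ddA_{n-1})$ so as to move the interacting short/long pair onto the nodes $0,1,2$, reducing to the model collision $e_{\epsilon_3}e_1$. Using $\epsilon_3=r_1\beta_0$, hence $e_{\epsilon_3}=r_1e_0r_1$, together with $r_1e_1=e_1$, one obtains $e_{\epsilon_3}e_1=r_1e_0e_1$, and since $e_1r_1=e_1$ makes $r_1e_0e_1$ invariant under conjugation by $r_1$, this is $s f_1^{(5)} s^{-1}$ with $s=r_1$. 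For general $t$ this shows that the collision consumes exactly one long root of the block, turning it into the $e_0e_1$-configuration of $f_t^{(5)}=e_0e_1 f_{t-1}^{(2)}$, while the remaining $t-1$ long roots, now in the company of a short root, acquire their mates through the admissible-closure lemma; this accounts for the power $\delta^{t-1}$ for $f_t^{(1)}$ and for the absence of any $\delta$ for $f_t^{(2)}$ (which already carries all its mates). The $f^{(2)}$-factor is reorganized by Lemma \ref{ee*}, and the bookkeeping of the conjugating element is controlled by the normalizer/commuting-subgroup structure of Lemma \ref{normalizer} and Proposition \ref{first3cases}.

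The consequence is then immediate: in every case $e_{\epsilon_{i+2}}f_t^{(j)}$ has been exhibited as a power of $\delta$ times a $W(\ddB_n)$-conjugate of a member of $F$, hence lies in $M=\delta^{\Z}W(\ddB_n)FW(\ddB_n)$, and as the $\epsilon_{i+2}$ exhaust the short roots of $\Psi^+$, the monomial $e_\beta f_t^{(j)}$ belongs to $M$ for every short root $\beta$ and every $j\in\{1,2\}$. The main obstacle remains the overlapping case (ii): correctly identifying the orbit of the resulting admissible set and the conjugating element after the collision, and verifying that the scalar matches, is exactly the point at which the explicit relations of Lemmas \ref{ee*} and \ref{e2r1e0e1e2} must be invoked with care.
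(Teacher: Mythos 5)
Your proposal is correct and follows essentially the same route as the paper's proof: part (i) by commuting $e_{\epsilon_{i+2}}$ into the block and passing to the admissible closure of $\{\epsilon_{i+2}\}\cup Z_t$ (resp.\ $\{\epsilon_{i+2}\}\cup\tilde{Z}_t$, which is already admissible), and part (ii) by conjugating with permutations in $\langle r_1,\ldots,r_{n-1}\rangle$ to a standard collision --- the paper reduces by downward induction on $i$ (conjugating by $r_ir_{i-1}r_{i+1}r_i$ or $r_i$) to the base case $i=n-2t$ and then sends $(\epsilon_{i+2},\beta_{n+1-2t},\beta_{n+1-2t}^*)$ to $(\beta_0,\beta_1,\beta_1^*)$ so that the product becomes $e_0e_1f_{t-1}^{(1)}$ or $e_0e_1e_1^*f_{t-1}^{(2)}$, which is exactly your model collision $e_{\epsilon_3}e_1=r_1e_0e_1=r_1f_1^{(5)}r_1^{-1}$ up to one absorbed $r_1$. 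The only caveat, which you share with the paper's own statement, is the $\delta$-bookkeeping (the closure lemma gives $e_B=\delta^{-t}e_{\overline{B}}$ rather than $\delta^{t}$, and in case (ii) for $f_t^{(2)}$ the identity $e_0e_1e_1^*=\delta e_0e_1$ does introduce a power of $\delta$ despite the mates being present); this is immaterial for the consequence, since $M=\delta^{\Z}W(\ddB_n)FW(\ddB_n)$ absorbs all powers of $\delta$.
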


\begin{proof} (i). We have $e_{\epsilon_{i+2}}f_t^{(1)}=e_{B}=\delta^{-t}e_{\overline{B}}$, where $B=\{\epsilon_{i+2}\}\cup Z_t$ and $\overline{B}=\{\epsilon_{i+2}\}\cup \tilde{Z}_t$ is on the $W(\ddB_n)$-orbit of $\bar{Z}_{t+1}$, hence the first equality.
The second equality holds by a similar argument. 

\nl(ii). First
consider the case $i=n-2t$.  For
$s=r_{\epsilon_{i+2}-\epsilon_{3}}r_ir_{i+1}
r_{\epsilon_{i+2}-\epsilon_{3}}$, we have
\begin{eqnarray*}
s\epsilon_{i+2}&=& \beta_0,\qquad
s \beta_{n+1-2t}=\beta_1,\qquad
s \beta_{n+1-2t}^*=\beta_1^*,
\end{eqnarray*}
and so
\begin{eqnarray*}
s (Z_{t}\setminus\{\beta_{n+1-2t}\})&=&(Z_{t}\setminus\{\beta_{n+1-2t}\}),\\
s (\tilde{Z}_{t}\setminus\{\beta_{n+1-2t}, \beta_{n+1-2t}^*\})&=&(\tilde{Z}_{t}\setminus\{\beta_{n+1-2t}, \beta_{n+1-2t}^*\}).\\
\end{eqnarray*}
Therefore
\begin{eqnarray*}
se_{\epsilon_{i+2}}f_{t}^{(1)}s^{-1}
&=&se_{\epsilon_{i+2}}s^{-1}se_{\beta_{n+1-2t}}s^{-1}sf_{t-1}^{(1)}s^{-1}
=e_0e_1f_{t-1}^{(1)}=\delta^{t-1}   f_t^{(5)},
\end{eqnarray*}
and similarly for $e_{\epsilon_{i+2}}f_{t}^{(2)}$ instead of
 $e_{\epsilon_{i+2}}f_{t}^{(1)}$.

Next, consider $i>n-2t$.  Now $\beta_{i+1}\in Z_t\subset \tilde{Z}_t$,
and $r_ir_{i-1}r_{i+1}r_{i}$ interchanges $\beta_{i+1}$ and
$\beta_{i-1}$ as well as $\beta_{i+1}^*$ and $\beta_{i-1}^*$;
moreover, it fixes all other elements of $Z_{t}$ and $\tilde{Z}_t$.
Hence $r_ir_{i-1}r_{i+1}r_{i}\epsilon_{i+2}=\epsilon_i$ and the lemma
holds by induction on $i$. For $\beta_i\in Z_t\subset \tilde{Z}_t$,
note that $r_i\{\epsilon_{i+2}\}=\{\epsilon_{i+1}\}$ and that $r_i$
keeps $Z_{t}$ and $\tilde{Z}_t$ invariant. It follows that, by
conjugation with $r_ir_{i-1}r_{i+1}r_{i}$ and $r_i$, the two
equalities are brought back to the cases for $i-2$ and $i-1$, respectively.

As for the final statement, note that each short root is of the form
$\eps_j$ for some $j\in\{2,\ldots,n+1\}$.
\end{proof}

\np
Let $W=W(\ddB_n)$.
\begin{lemma}\label{longef_t} If $\beta$  is a
long root in $\Psi^+$, then
  \begin{eqnarray*}
 e_{\beta}f_t^{(1)}&\in& \delta^{\Z}W f_t^{(1)}\cup\delta^{\Z} W f_{t+1}^{(1)}W\cup  \delta^{\Z}f_t^{(2)},\\
 e_{\beta}f_t^{(2)}
&\in& \delta^{\Z} W f_{t}^{(2)}\cup \delta^{\Z} W f_{t+1}^{(2)}W.
 \end{eqnarray*}
 \end{lemma}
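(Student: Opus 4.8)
The plan is to determine $e_\beta f_t^{(i)}$ by locating the long root $\beta$ relative to the fixed admissible set underlying $f_t^{(i)}$---namely $Z_t$ when $i=1$ and $\tilde Z_t$ when $i=2$---and to read off the outcome from the three clauses of the monoid action in Remark \ref{monoidaction}. The organizing principle is diagrammatic: under $\psi\phi$ the element $f_t^{(i)}$ is a symmetric diagram carrying the $t$ horizontal strands of $Z_t$ (resp.\ $\tilde Z_t$) at top and bottom together with through-strands on the remaining points, and left multiplication by $e_\beta$ reconnects only the top. If the support of $\beta$ lies among the points already joined by horizontal strands, the through-strands and hence the bottom row are left intact, which forces the answer into a one-sided coset $Wf_t^{(i)}$; if instead $\beta$ uses a point carried by a through-strand, a matching horizontal strand is created at the bottom as well, yielding the symmetric two-sided $Wf_{t+1}^{(i)}W$. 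Since $\beta$ and every root of the set are long, of the form $\epsilon_a\pm\epsilon_b$ with $a,b\ge2$, their relative position is decided by shared indices, and I split accordingly.

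First the cases where $\beta$ is orthogonal to the set. If $\beta$ already belongs to it, then $e_\beta$ is one of the commuting factors of $f_t^{(i)}$ (Lemma \ref{2orthogonalroots}) and $e_\beta^2=\delta e_\beta$ (relation (\ref{0.1.5}), valid because $\beta$ is long) gives $e_\beta f_t^{(i)}=\delta f_t^{(i)}$. If $\beta$ is orthogonal to $Z_t$ but outside it, index counting leaves two possibilities: either $\beta=\gamma^*$ for a unique $\gamma\in Z_t$, in which case $Z_t\cup\{\beta\}$ is mutually orthogonal with admissible closure the standard $\tilde Z_t$, so the admissible-closure lemma yields $e_\beta f_t^{(1)}=\delta^{-(t-1)}f_t^{(2)}\in\delta^{\Z}f_t^{(2)}$; or $\beta$ is index-disjoint from $Z_t$, so $Z_t\cup\{\beta\}$ is itself admissible of type $(1)$ with $t+1$ roots, hence $W$-conjugate to $Z_{t+1}$ by Proposition \ref{classification}, and $e_\beta f_t^{(1)}=we_{Z_{t+1}}w^{-1}\in Wf_{t+1}^{(1)}W$. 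For $f_t^{(2)}$ the mate is already present, so the orthogonal-and-outside case forces $\beta$ index-disjoint, the closure is the standard $\tilde Z_{t+1}$, and $e_\beta f_t^{(2)}\in\delta^{\Z}Wf_{t+1}^{(2)}W$.

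The remaining case, $\beta$ not orthogonal to the set, is where the work lies. When $\beta$ shares an index with exactly one root $\gamma$ of the set, the pair $\{\beta,\gamma\}$ spans an $\ddA_2$-subsystem inside the $\ddA_{n-1}$-subalgebra generated by $\{r_i,e_i\}_{i=1}^{n-1}$, and relation (\ref{0.1.13}) gives $e_\beta e_\gamma=r_\gamma r_\beta e_\gamma$; as $\beta$ commutes with the remaining, index-disjoint factors this rewrites $e_\beta f_t^{(i)}=r_\gamma r_\beta f_t^{(i)}\in Wf_t^{(i)}$, one-sided exactly as the diagram predicts. The genuine obstacle is a double overlap: for $f_t^{(1)}$ when $\beta$ meets two distinct roots $\gamma_1,\gamma_2\in Z_t$, and for $f_t^{(2)}$ when it may in addition meet a root together with its mate. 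A single use of (\ref{0.1.13}) no longer decouples $\beta$, so I would reduce these either by iterating reflections $r_{\gamma_1}$, which lie in the stabilizer $A_t^{(i)}$ and fix $f_t^{(i)}$ by Proposition \ref{first3cases}, or by the classical three-index identities of Remark \ref{additionalrelations} carried out in the $\ddA_{n-1}$-subalgebra; and for $f_t^{(2)}$ one must carry the decorated mate-strands through the same bookkeeping. In every instance the diagram picture---the top horizontal strands reconnected while the bottom row is untouched---guarantees that the result is again $wf_t^{(i)}$ for a suitable $w\in W$.

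Finally I would verify that the listed positions exhaust all long roots $\beta$, a short check on the shared indices governing $(\epsilon_a\pm\epsilon_b,\epsilon_c-\epsilon_d)$, and then assemble the memberships, recording the powers of $\delta$ contributed by $e_\beta^2=\delta e_\beta$ and by the admissible-closure lemma, to obtain exactly the two displayed unions.
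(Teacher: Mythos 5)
Your proposal follows essentially the same route as the paper's proof: a case split on the position of the long root $\beta$ relative to $Z_t$ (resp.\ $\tilde{Z}_t$) --- in the set, equal to an orthogonal mate, orthogonal and index-disjoint, or non-orthogonal to some member --- with the closure lemma giving $\delta^{\Z}f_t^{(2)}$ and $\delta^{\Z}Wf_{t+1}^{(i)}W$ in the middle cases and (\ref{0.1.13}) giving $\delta^{\Z}Wf_t^{(i)}$ in the last. The one place you diverge is in manufacturing a ``genuine obstacle'' that does not exist. Your worry about double overlap comes from misreading the single-overlap step as requiring $e_\beta$ to commute with the remaining factors; it does not. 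Since the factors of $f_t^{(i)}$ pairwise commute (Lemma \ref{2orthogonalroots}), write $f_t^{(i)}=e_\gamma h$ for any member $\gamma$ not orthogonal to $\beta$; the $W$-conjugate of (\ref{0.1.13}) gives $e_\beta e_\gamma = r_\gamma r_\beta e_\gamma$, whence $e_\beta f_t^{(i)} = r_\gamma r_\beta f_t^{(i)}\in Wf_t^{(i)}$ with $e_\beta$ eliminated in a single stroke, no matter how many other members $\beta$ meets. This is exactly the paper's third case, which makes no single/double distinction, so your proposed repairs (iterating stabilizer reflections, the three-index identities of Remark \ref{additionalrelations}, or ``the diagram picture guarantees'') are unnecessary --- and, as written, they are the one non-rigorous spot, since the diagrammatic guarantee is a heuristic rather than an argument inside $\Br(\ddB_n)$.

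Two minor corrections for the record. First, the pair $\{\beta,\gamma\}$ need not lie in the $\ddA_{n-1}$-subalgebra generated by $\{r_i,e_i\}_{i=1}^{n-1}$, since either root may be of the form $\epsilon_a+\epsilon_b$; the conjugated form of (\ref{0.1.13}) should instead be justified by transitivity of $W(\ddB_n)$ on pairs of long roots with inner product $\pm1$ together with Lemma \ref{lm:SimpleRootRels} (and note that two distinct non-orthogonal long positive roots automatically satisfy $|(\beta,\gamma)|=1$, so this covers all your overlap cases). Second, your exponent $\delta^{-(t-1)}$ in the mate case is the correct reading of the closure lemma ($e_{\overline{X}}=\delta^{|\overline{X}\setminus X|}e_X$ with $|X|=t+1$, $|\tilde{Z}_t|=2t$); the paper writes $\delta^{t-1}$, but the discrepancy is immaterial since only membership in $\delta^{\Z}f_t^{(2)}$ is claimed.
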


 \begin{proof}Let's consider $e_{\beta}f_t^{(1)}$. First, if $\beta\in Z_t$, then $e_{\beta}f_t^{(1)}=\delta f_t^{(1)}$.
 Second, if $\beta$ is an orthogonal mate of any element in $Z_t$, we have
 $$e_{\beta}f_t^{(1)}= e_{\{\beta\}\cup Z_t}=\delta^{t-1}e_{\overline{\{\beta\}\cup Z_t}}=\delta^{t-1}f_t^{(2)},$$
 Third,  if  $\beta$ and $\beta'\in Z_{t}\cup
\tilde{Z}_t$ are two long roots and not orthogonal to each other. Then
$e_{\beta}e_{\beta'}\overset{(\ref{0.1.13})}{=}r_{\beta'}r_{\beta}
e_{\beta'}$, therefore $e_{\beta}f_t^{(1)}\in \delta^{\Z}W f_t^{(1)}$.
Fourth, $\beta$ is not in the above three cases, hence  $\{\beta\}\cup Z_t$  will be on the $W$-orbit of $Z_{t+1}$, therefore we have
$e_{\beta}f_t^{(1)}\in \delta^{\Z}W f_{t+1}^{(1)}W$.

The second claim of $e_{\beta}f_t^{(2)}$ holds  by a similar argument.
\end{proof}

\begin{lemma} \label{longee0f}
For each long root $\beta\in \Psi^+$, we have $e_\beta f_t^{(3)}\in M$.
\end{lemma}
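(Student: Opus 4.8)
The plan is to adapt the proof of Lemma~\ref{longef_t}, exploiting the factorization $f_t^{(3)}=e_0f_{t-1}^{(2)}$ together with the decomposition $\bar{Z}_t=\tilde{Z}_{t-1}\cup\{\beta_0\}$, where $\beta_0=\epsilon_2$ is the unique short root in the top of $f_t^{(3)}$. For a long root $\beta\in\Psi^+$ I would distinguish four cases according to the position of $\beta$ relative to $\bar{Z}_t$; the first three run parallel to the $f_t^{(1)}$, $f_t^{(2)}$ analysis of Lemma~\ref{longef_t}, and the fourth is genuinely new because of the short root.

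First, if $\beta\in\bar{Z}_t$, then since the factors of $f_t^{(3)}$ mutually commute (Lemma~\ref{2orthogonalroots}) and $e_\beta^2=\delta e_\beta$ for a long root, I get $e_\beta f_t^{(3)}=\delta f_t^{(3)}\in M$. Second, if $\beta\perp\bar{Z}_t$, then $e_\beta f_t^{(3)}=e_{\{\beta\}\cup\bar{Z}_t}$, whose admissible closure is $\{\beta,\beta^*\}\cup\bar{Z}_t$, a set in the $W$-orbit of $\bar{Z}_{t+1}$; the closure lemma $e_{\overline{X}}=\delta^{|\overline{X}\setminus X|}e_X$ then gives $e_\beta f_t^{(3)}=\delta^{-1}e_{\{\beta,\beta^*\}\cup\bar{Z}_t}\in\delta^{\Z}Wf_{t+1}^{(3)}W\subseteq M$. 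Third, if $\beta$ is not orthogonal to some long root $\beta'\in\tilde{Z}_{t-1}$, then writing $f_t^{(3)}=e_{\beta'}\prod_{\gamma\neq\beta'}e_\gamma$ and applying the conjugate of (\ref{0.1.13}) in the form $e_\beta e_{\beta'}=r_{\beta'}r_\beta e_{\beta'}$, I obtain $e_\beta f_t^{(3)}=r_{\beta'}r_\beta f_t^{(3)}\in Wf_t^{(3)}\subseteq M$.

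The remaining, and hardest, case is when $\beta$ is orthogonal to every long root of $\bar{Z}_t$ but not to the short root $\beta_0=\epsilon_2$; then $\beta=\epsilon_j\pm\epsilon_2$ for a coordinate $j$ not occupied by $\tilde{Z}_{t-1}$, which forces $t\leq n/2$ and hence guarantees that the target $f_t^{(6)}$ is defined. I would choose $w\in W$ permuting only the free coordinates, so that it fixes $\epsilon_2$ and stabilizes $\tilde{Z}_{t-1}$ while carrying $\beta_1=\epsilon_3-\epsilon_2$ (respectively $\beta_1^*=\epsilon_3+\epsilon_2$) to $\beta$. Since $w$ fixes $\epsilon_2$ and stabilizes $\tilde{Z}_{t-1}$, it commutes with both $e_0$ and $f_{t-1}^{(2)}$ by Lemma~\ref{lm:SimpleRootRels}, so $e_\beta f_t^{(3)}=we_1e_0f_{t-1}^{(2)}w^{-1}=wf_t^{(6)}w^{-1}$ in the first subcase. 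In the second subcase I use $e_1^*e_0=r_0e_1r_0e_0=r_0e_1e_0$ (by (\ref{0.1.4})) to get $e_\beta f_t^{(3)}=wr_0f_t^{(6)}w^{-1}$; either way the result lies in $Wf_t^{(6)}W\subseteq M$.

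I expect the main obstacle to be the bookkeeping in this last case: verifying that the conjugating element $w$ can genuinely be chosen inside the common stabilizer of $\epsilon_2$ and $\tilde{Z}_{t-1}$, so that the conjugation passes cleanly through $e_0$ and $f_{t-1}^{(2)}$, and confirming that the coordinate constraint on $\beta$ restricts this case to $t\leq n/2$ so that $f_t^{(6)}\in F$. The first three cases are routine adaptations of Lemma~\ref{longef_t} and should require no new ideas.
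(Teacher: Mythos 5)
Your proof is correct and follows essentially the same route as the paper's: the same case split according to whether $\beta$ meets $\tilde{Z}_{t-1}$, is orthogonal to all of $\bar{Z}_t$ (landing in $\delta^{\Z}Wf_{t+1}^{(3)}W$ via the admissible closure), or is orthogonal to the long roots but not to $\beta_0$ (landing in $Wf_t^{(6)}W$). Your treatment is merely a bit more explicit than the paper's, which delegates the first case to Lemma~\ref{longef_t} together with Lemma~\ref{allr} and invokes a conjugating element $r\in N_t^{(3)}$ abstractly where you construct $w$ (and the $r_0$ adjustment for $\beta_1^*$) by hand.
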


\begin{proof} If $\beta$ is not orthogonal to all roots of $\tilde{Z}_{t-1}$, then
$e_{\beta}f_t^{(3)}=e_{\beta}f_{t-1}^{(2)}e_0$ by applying Lemma
\ref{longef_t} and Lemma \ref{allr}.

If $\beta$ is orthogonal to $\tilde{Z}_{t-1}$ and $\beta_0$, there exists a
$r\in N_t^{(3)}$, which does not move any element in $\tilde{Z}_{t-1}$ and
$\beta_0$ and $r\beta=\beta_{n+1-2t}$; hence
$e_{\beta}f_t^{(3)}=rf_{t+1}^{(3)} r^{-1}$.

If $\beta$ is orthogonal to
$\tilde{Z}_{t-1}$ and not orthogonal to $\beta_0$, then we  can find an element $r\in N_t^{(3)}$ such that $re_1r^{-1}=e_{\beta}$,
which implies that
$e_{\beta}f_t^{(3)}=r^{-1}e_1e_0f_{t-1}^{(2)}r\in Wf_{t}^{(6)}W$,  therefore our lemma holds.
\end{proof}

\begin{lemma} \label{shortee0f}
For each short root $\beta\in \Psi^+$, we have
$e_\beta f_t^{(3)}$,  $e_\beta f_t^{(5)}\in M$.
\label{shortee0e1f}
\end{lemma}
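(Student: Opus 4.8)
The plan is to exploit that both elements begin with the short-root idempotent $e_0 = e_{\beta_0}$, where $\beta_0 = \epsilon_2$: recall from the remark after Notation \ref{df:fti} that $f_t^{(3)} = e_0 f_{t-1}^{(2)}$, while $f_t^{(5)} = e_0 e_1 f_{t-1}^{(2)}$ by definition. Since $\beta_0$ is the only short root among the roots defining these elements (the factors of $f_{t-1}^{(2)}$ and the root $\beta_1$ all being long), the whole lemma reduces to understanding the product $e_\beta e_0$ of the two short-root idempotents $e_\beta$ and $e_0$; by associativity everything to the right of $e_0$ is then carried along unchanged. Concretely, I would first establish the key claim
\[
e_\beta e_0 \in \delta^{\Z} W e_0 \qquad \text{for every short root } \beta\in\Psi^+ .
\]

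To prove the claim, recall that the short roots of $\Psi^+$ are exactly $\epsilon_2,\ldots,\epsilon_{n+1}$, that $e_{\epsilon_2}=e_0$, and that $e_{w\beta}=we_\beta w^{-1}$; since each $r_i$ with $1\le i\le n-1$ acts as the transposition of $\epsilon_{i+1}$ and $\epsilon_{i+2}$, every $\epsilon_j$ is $W$-conjugate to $\epsilon_2$. I would argue by induction on $j$. For $j=2$ the relation (\ref{0.1.6}) gives $e_0 e_0 = \delta^2 e_0$. For $j=3$ we have $e_{\epsilon_3}=r_1 e_0 r_1$, whence
\[
e_{\epsilon_3}e_0 = r_1(e_0 r_1 e_0)\overset{(\ref{0.1.12})}{=}\delta\, r_1 e_0 ,
\]
which is the algebraic shadow of the fact that two distinct short roots never lie in a common admissible set. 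For $j\ge 4$ one has $e_{\epsilon_j}=r_{j-2}e_{\epsilon_{j-1}}r_{j-2}$ with $r_{j-2}$ commuting with $e_0$ by (\ref{0.1.8}) (as $j-2\ge 2$ is non-adjacent to $0$), so
\[
e_{\epsilon_j}e_0 = r_{j-2}\,(e_{\epsilon_{j-1}}e_0)\,r_{j-2},
\]
and the induction hypothesis together with $r_{j-2}e_0=e_0 r_{j-2}$ places this in $\delta^{\Z}W e_0$, completing the proof of the claim.

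Finally I would substitute the claim back. Writing $e_\beta e_0 = \delta^a w e_0$ with $w\in W$, associativity yields
\[
e_\beta f_t^{(3)} = (e_\beta e_0)f_{t-1}^{(2)} = \delta^a w f_t^{(3)}
\quad\text{and}\quad
e_\beta f_t^{(5)} = (e_\beta e_0)e_1 f_{t-1}^{(2)} = \delta^a w f_t^{(5)},
\]
and both lie in $\delta^{\Z}W F\subseteq M$ because $M$ is closed under left multiplication by $W$ (Lemma \ref{allr}). I expect the main obstacle to be conceptual rather than computational: it is the single collapse identity (\ref{0.1.12}) at the base of the induction that does all the work, since this is precisely the point where the structural constraint that an admissible set of type $\ddB_n$ carries at most one short root enters. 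The remaining difficulty is the routine bookkeeping that all reflections $r_{j-2}$ used to transport $\epsilon_2$ to $\epsilon_j$ commute with $e_0$, and the need to separate the case $\beta=\beta_0$, where the collapse produces the factor $\delta^2$ rather than $\delta$.
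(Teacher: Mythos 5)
Your proposal is correct and takes essentially the same route as the paper: the paper writes $\beta=r_ir_{i-1}\cdots r_1\beta_0$ and establishes $e_\beta e_0\in\delta^{\Z} W e_0$ in a single telescoping computation using (\ref{0.1.8}) and (\ref{0.1.12}), then concludes exactly as you do from the fact that both $f_t^{(3)}$ and $f_t^{(5)}$ begin with $e_0$. Your induction on $j$ merely unrolls that one-line computation, so the two arguments are identical in substance.
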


\begin{proof}
There is an index $i$ such that $\beta=r_{i}r_{i-1}\cdots r_{1} \beta_0$. Now
\begin{eqnarray*}
e_{\beta}e_0&=&r_{i}r_{i-1}\cdots r_{1} e_0r_{1}(r_{2}\cdots r_{i}e_0)
\overset{(\ref{0.1.8})}{=}r_{i}r_{i-1}\cdots r_{1} (e_0r_{1}e_0)r_{2}\cdots
r_{i}\\ &\overset{(\ref{0.1.12})}{=}&\delta r_{i}r_{i-1}\cdots r_{1}
e_0r_{2}\cdots r_{i} \overset{(\ref{0.1.8})}{=}\delta r_{i}r_{i-1}\cdots
r_{1} r_{2}\cdots r_{i}e_0\\
&\in&\delta W e_0,
\end{eqnarray*}
and the lemma follows as
both $f_t^{(3)}$ and  $f_t^{(5)}$ begin with $e_0$.
\end{proof}

\np
\begin{lemma}\label{longee0e1f}   Let $\beta\in \Psi^+$ be a long root.
\begin{enumerate}[(i)]
\item
If $\beta$ is not orthogonal to $\tilde{Z}_{t-1}$, then $e_{\beta}
f_t^{(5)}\in\delta^{\Z}Wf_{t}^{(5)}$.
\item
If $\beta=\epsilon_j\pm \epsilon_i$, with $2<i<j<n+4-2t$, then $e_{\beta} f_t^{(5)}\in \delta^{\Z}Wf_{t+1}^{(5)}W$.
\item
If $\beta=\epsilon_j\pm \epsilon_3$, with $3<j<n+4-2t$, then $e_{\beta} f_t^{(5)}\in \delta^{\Z}Wf_{t+1}^{(3)}W$.
\item
If $\beta=\epsilon_j\pm \epsilon_2$, with $2<j<n+4-2t$, then $e_{\beta} f_t^{(5)}\in \delta^{\Z}Wf_{t+1}^{(5)}W$.
\item
If $\beta=\beta_1$,  then $e_{\beta}
f_t^{(5)}=e_{1} f_t^{(5)}=rf_{t}^{(2)}r^{-1}$, for some $r\in W$.
\end{enumerate}
Thus for each $\beta\in \Psi^+$ the monomial $e_{\beta} f_t^{(5)}$ belongs
to $M$.
\end{lemma}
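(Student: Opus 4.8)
The plan is to dispose of the short roots first and then work through the long roots by a support analysis. Short roots $\beta=\epsilon_j$ are already covered by Lemma~\ref{shortee0f}, so the closing ``Thus'' clause reduces to establishing (i)--(v) and checking that these exhaust the long roots of $\Psi^+$. Recall $f_t^{(5)}=e_0e_1f_{t-1}^{(2)}$ with $f_{t-1}^{(2)}=e_{\tilde Z_{t-1}}$; here $\tilde Z_{t-1}$ occupies the coordinates $\epsilon_{n+4-2t},\dots,\epsilon_{n+1}$ while the fork $e_0e_1$ occupies $\epsilon_2,\epsilon_3$. A long root $\beta=\epsilon_j\pm\epsilon_i$ (with $2\le i<j\le n+1$) is orthogonal to $\tilde Z_{t-1}$ precisely when $j\le n+3-2t$. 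Case~(i) therefore absorbs every $\beta$ that meets the tail, and the roots orthogonal to $\tilde Z_{t-1}$ are partitioned by their smallest coordinate: $i\ge4$ (case (ii)), $i=3$ (case (iii)), $i=2$ (case (iv)), together with the single root $\beta_1=\epsilon_3-\epsilon_2$ split off into (v). Once this partition is in hand, every right-hand side of (i)--(v) lies in $\delta^{\Z}WFW=M$, which by Lemma~\ref{allr} is closed under left multiplication by $W$, so the final statement follows.

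The main tool for the case reductions is the conjugation rule $we_\beta w^{-1}=e_{w\beta}$ combined with Proposition~\ref{first3cases} and the normalizer structure of Lemmas~\ref{CAW}--\ref{normalizer}: using a suitable $w\in W(\ddB_n)$ I would move $\beta$ to a standard representative of its case while tracking the effect on $f_t^{(5)}$ through its stabilizer. The benign cases are (i), (ii) and (v). In (v) one has, by \eqref{eieistar} with $i=1$, the identity $e_1e_0e_1=e_1e_1^*$, so $e_1f_t^{(5)}=e_1e_1^*f_{t-1}^{(2)}=e_{\{\beta_1,\beta_1^*\}\cup\tilde Z_{t-1}}$, and since this set is $W$-conjugate to $\tilde Z_t$ we obtain $rf_t^{(2)}r^{-1}$. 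In (ii) the root $\beta$ is orthogonal to both $\beta_0$ and $\beta_1$, so $e_\beta$ commutes past $e_0e_1$ by \eqref{0.1.8}--\eqref{0.1.9} and then enlarges $\tilde Z_{t-1}$ to a set $W$-conjugate to $\tilde Z_t$ via Lemma~\ref{longef_t}, yielding $f_{t+1}^{(5)}$. In (i) the interaction with the tail dominates, and the long-root analysis of Lemma~\ref{longef_t} applied to the factor $f_{t-1}^{(2)}$, together with Lemma~\ref{allr}, collapses the product back into $\delta^{\Z}Wf_t^{(5)}$.

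The delicate cases are (iii) and (iv), where $\beta$ meets one of the special coordinates $\epsilon_2,\epsilon_3$ and so interacts non-trivially with the type-$\ddB$ fork $e_0e_1$. Here I would conjugate $\beta$ to $\epsilon_4\pm\epsilon_3$ (respectively $\epsilon_j\pm\epsilon_2$) and then invoke the $g$-identities of Lemma~\ref{e2r1e0e1e2}, notably \eqref{errreree1}--\eqref{errreree4}, to rewrite $e_\beta e_0e_1$. The governing phenomenon is that a root running through coordinate $3$ collapses the double configuration of $e_0e_1$ into the single-short-root element, landing in the $f_{t+1}^{(3)}$ family, whereas a root through coordinate $2$ preserves the $f^{(5)}$ shape and lands in $f_{t+1}^{(5)}$. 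I expect this rewriting to be the principal obstacle: one must control the interaction of $e_\beta$ with $e_0e_1$ while keeping every conjugating element inside the appropriate $C_t^{(i)}$ and $W(\ddA_{n-1})$ subgroups of Definition~\ref{df:CD}, and one must track the powers of $\delta$ generated by the admissible-closure reductions (the identity $e_{\overline X}=\delta^{|\overline X\setminus X|}e_X$). Granting these computations, the five cases exhaust the long roots and, with the short-root case, give $e_\beta f_t^{(5)}\in M$ for every $\beta\in\Psi^+$.
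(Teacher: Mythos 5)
Your skeleton is the paper's: kill the signs with the reflections $r_{\epsilon_i}$, split the long roots by how they meet the tail $\tilde{Z}_{t-1}$ and the fork coordinates $\epsilon_2,\epsilon_3$, conjugate to standard representatives through the stabilizer, and settle (v) via $e_1e_0e_1=e_1e_1^*$; cases (ii) and (v) are fine as you set them up, and the short roots are indeed absorbed by Lemma \ref{shortee0f}. But the two cases you yourself call ``the principal obstacle'' are exactly where the proposal stops, and the tool you name for them is the wrong one. The identities \eqref{errreree1}--\eqref{errreree4} rewrite expressions of the form $e_0r_1r_2r_3\,g$, $e_0r_1g$, $e_1r_2r_3g$: they presuppose a factor $g$ already present and are the engine of Lemma \ref{allee2r1e0e1e2f} (the $f_t^{(4)}$ series), not of this lemma. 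What is actually needed here are two one-line consequences of \eqref{3.1.4}. For (iii), with representative $\beta_2$, the relation $e_2e_1=e_2r_1r_2$ gives $e_2f_t^{(5)}=e_0e_2f_{t-1}^{(2)}r_1r_2$, and taking the admissible closure of $\{\beta_0,\beta_2\}\cup\tilde{Z}_{t-1}$ (which adjoins $\beta_2^*$ at the cost of one $\delta$) places $e_0e_2f_{t-1}^{(2)}$ in $\delta^{\Z}Wf_{t+1}^{(3)}W$. For (iv), with representative $\beta_1+\beta_2$, the relation $e_1=e_1e_2r_1r_2$ gives $e_{\beta_1+\beta_2}e_0e_1=r_1e_2r_1e_0e_1e_2\,r_1r_2=r_1gr_1r_2$: a copy of $g$ is \emph{created}, and $e_{\beta_1+\beta_2}f_t^{(5)}=r_1f_t^{(4)}r_1r_2$. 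This falsifies your governing heuristic for (iv): a root through coordinate $2$ does not ``preserve the $f^{(5)}$ shape'' --- the product jumps to the $g$-family $f_t^{(4)}$. (The displayed membership in item (iv) of the lemma is itself off for the same reason; only the conclusion $e_\beta f_t^{(5)}\in M$, which $Wf_t^{(4)}W\subseteq M$ delivers, is used downstream.)

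A smaller but real defect is your mechanism for (i). Applying Lemma \ref{longef_t} ``to the factor $f_{t-1}^{(2)}$'' requires commuting $e_\beta$ past $e_0e_1$, and this fails precisely for the straddling roots $\epsilon_j\pm\epsilon_2$ and $\epsilon_j\pm\epsilon_3$ with $j\geq n+4-2t$, which case (i) must also cover since they are not orthogonal to the tail. The correct easy argument avoids the fork entirely: choose $\gamma\in\tilde{Z}_{t-1}$ with $\beta\not\perp\gamma$; as $e_\gamma$ commutes with $e_0e_1$ and the other tail factors, one may write $f_t^{(5)}=e_\gamma\cdot(\cdots)$, and the conjugated form of \eqref{0.1.13} gives $e_\beta e_\gamma=r_\gamma r_\beta e_\gamma$ when $\beta\notin\{\gamma,\gamma^*\}$ (and $e_\beta e_\gamma=\delta e_\gamma$ otherwise, $\tilde{Z}_{t-1}$ being mate-closed), whence $e_\beta f_t^{(5)}\in\delta^{\Z}Wf_t^{(5)}$ with no commutation of $e_\beta$ past $e_0e_1$ ever required. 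In sum: the high-level plan coincides with the paper's, but the proposal explicitly defers the decisive computations in (iii) and (iv), the identities it points to for them would not perform the rewriting, and its structural prediction for (iv) is wrong; that is a genuine gap at the heart of the lemma.
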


\begin{proof} As $r_{\epsilon_i}(\epsilon_j+\epsilon_i)=\epsilon_j-\epsilon_i$ and $\{r_{\epsilon_i}\}_{i=2}^{n+1}\subset N_t^{(6)}$,
 we only need consider $\beta=\epsilon_j-\epsilon_i$.
 Case (i)  can be checked easily.
 Case (v) holds as
  $$e_1f_{t}^{(5)}=e_1e_0e_1f_{t-1}^{(2)}\overset{(\ref{0.1.19})}{=}e_1e_1^{*}f_{t-1}^{(2)}=e_{\{\beta_1,\beta_1^*\}\cup \tilde{Z}_{t-1}},$$
  where $\{\beta_1,\beta_1^*\}\cup \tilde{Z}_{t-1}$ are on the $W$-orbits of
  $\tilde{Z}_t$.

After conjugation by suitable elements of $N_t^{(5)}$, we can restrict
ourselves to $\beta=\beta_{n+1-2t}$, $\beta_2$, $\beta_1+\beta_2$ for cases
(ii), (iii), and (iv), respectively.  Case (ii) can be proved easily. 
Case (iii) holds as
$e_2f_t^{(5)}=e_0e_2e_1f_{t-1}^{(2)}=e_0e_2r_1r_2f_{t-1}^{(2)}=e_0e_2f_{t-1}^{(2)}r_1r_2$,
and $e_0e_2f_{t-1}^{(2)}$ is conjugate to $e_0f_{t}^{(2)}$ by some element
of $W(\ddB_n)$. Case (iv) holds as
$e_{\beta_1+\beta_2}e_0e_1=r_1e_2r_1e_0e_1\overset{(\ref{3.1.4})}{=}r_1e_2r_1e_0e_1e_2r_1r_2=r_1gr_1r_2$.
\end{proof}

\np
\begin{lemma} \label{shortee1e0f}
For each short root $\beta\in \Psi^+$, we have $e_{\beta}f_t^{(6)}\in M$.
\end{lemma}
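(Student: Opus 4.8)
The natural temptation is to transport Lemma~\ref{shortee0f} by opposition: since $e_0$ and $e_1$ commute with $f_{t-1}^{(2)}$ (the long roots of $\tilde Z_{t-1}$ being orthogonal to $\beta_0$ and $\beta_1$ for $t\le n/2$) and $f_{t-1}^{(2)}$ is $\op$-invariant, one has $(f_t^{(5)})^{\op}=f_t^{(6)}$; as $M=M^{\op}$ and $e_\beta^{\op}=e_\beta$, membership $e_\beta f_t^{(6)}\in M$ is equivalent to $f_t^{(5)}e_\beta\in M$. This only converts left multiplication into right multiplication, however, so I would instead argue directly, reducing to the already-settled products $e_\gamma f_t^{(3)}$ of Lemmas~\ref{longee0f} and~\ref{shortee0f}.

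The plan is to use $f_t^{(6)}=e_1e_0f_{t-1}^{(2)}$ and $f_t^{(3)}=e_0f_{t-1}^{(2)}$ from Notation~\ref{df:fti} (reading $f_0^{(2)}=1$, which also covers the base value $t=1$). Every short root of $\Psi^+$ is some $\epsilon_j$ with $2\le j\le n+1$, so it suffices to treat $\beta=\epsilon_j$, split into the cases $j\in\{2,3\}$ and $j\ge4$.

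For $j=2$, $\beta=\beta_0$ and~(\ref{0.1.16}) gives $e_0f_t^{(6)}=e_0e_1e_0f_{t-1}^{(2)}=\delta f_t^{(3)}$. For $j=3$, $e_{\epsilon_3}=r_1e_0r_1$; using $r_1e_1=e_1$ and~(\ref{0.1.16}) one finds $e_{\epsilon_3}f_t^{(6)}=r_1e_0e_1e_0f_{t-1}^{(2)}=\delta r_1f_t^{(3)}$. Both lie in $M$ since $M$ is stable under left multiplication by $W(\ddB_n)$ (Lemma~\ref{allr}). For $j\ge4$ we have $\epsilon_j\perp\beta_1$, so $e_{\epsilon_j}$ commutes with $e_1$, and by the computation in the proof of Lemma~\ref{shortee0f} there is $w'\in W(\ddB_n)$ with $e_{\epsilon_j}e_0=\delta w'e_0$. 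Hence
\[
e_{\epsilon_j}f_t^{(6)}=e_1e_{\epsilon_j}e_0f_{t-1}^{(2)}=\delta e_1w'f_t^{(3)}=\delta w'\,e_{w'^{-1}\beta_1}f_t^{(3)},
\]
where the final equality uses $e_1w'=w'e_{w'^{-1}\beta_1}$, obtained from $w'e_\gamma w'^{-1}=e_{w'\gamma}$.

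The genuine obstacle is this last rewriting. Leaving the stray $e_1$ on the left would force a factor of the shape $e_{\text{short}}f_t^{(5)}$ or $e_{\text{short}}f_t^{(6)}$ — precisely the $\op$-dual pair one is trying to resolve — and the argument would be circular. Conjugating $e_1$ past $w'$ instead produces $e_{w'^{-1}\beta_1}$ with $w'^{-1}\beta_1$ a \emph{long} root, so that $e_{w'^{-1}\beta_1}f_t^{(3)}\in M$ by Lemma~\ref{longee0f}, which is proved earlier and independently of the present case; left multiplication by $w'\in W(\ddB_n)$ then keeps the product in $M$. I would finally double-check the boundary value $t=1$ (where $f_{t-1}^{(2)}=1$ and $f_1^{(3)}=e_0$), but the three computations go through unchanged.
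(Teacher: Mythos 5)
Your proof is correct, but in the main case $\beta=\epsilon_j$ with $j\ge 4$ it takes a genuinely different route from the paper. The paper expands $e_{\beta}=r_i\cdots r_1e_0r_1\cdots r_i$ against $f_t^{(6)}=e_1e_0f_{t-1}^{(2)}$, rewrites $e_0r_1r_2e_1e_0=e_0e_2e_1e_0=\delta e_0e_2$, and lands in $We_0e_{\epsilon_{i+2}-\epsilon_3}f_{t-1}^{(2)}$, which it resolves by first applying Lemma \ref{longef_t} (long roots against $f^{(2)}$) and then Lemma \ref{shortef_t} (short roots against $f^{(2)}$-type elements). You instead commute $e_{\epsilon_j}$ past $e_1$, collapse $e_{\epsilon_j}e_0=\delta w'e_0$ via the computation in Lemma \ref{shortee0f}, and conjugate the stray $e_1$ through $w'$ into a long-root element acting on $f_t^{(3)}$, which Lemma \ref{longee0f} disposes of; this is a shorter reduction, it uses different key lemmas (Lemma \ref{longee0f} rather than the $f^{(2)}$-level Lemmas \ref{longef_t} and \ref{shortef_t}), and it correctly sidesteps the circularity you identify (leaving an $e_{\text{short}}f_t^{(5)}$ or $e_{\text{short}}f_t^{(6)}$ residue would be circular, whereas Lemma \ref{longee0f} is proved earlier and independently). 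Your two base cases $\beta\in\{\beta_0,\epsilon_3\}$ coincide verbatim with the paper's. One point to tighten: the step ``$\epsilon_j\perp\beta_1$, so $e_{\epsilon_j}$ commutes with $e_1$'' needs more than orthogonality --- the paper's own remark on $B_1=\{\epsilon_2,\epsilon_3\}$ shows that $e$-elements of orthogonal roots need not commute; you should invoke Lemma \ref{2orthogonalroots} after observing that $\{\epsilon_j,\beta_1,\beta_1^*\}$ is admissible for $j\ge 4$ (its preimage $\{\epsilon_j\pm\epsilon_1,\ \epsilon_3\pm\epsilon_2\}$ in $\Phi^+$ is closed under taking orthogonal mates). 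With that citation supplied, the argument is complete, including the boundary value $t=1$ under your convention $f_0^{(2)}=1$, which matches the paper's $f_1^{(3)}=e_0$ and $f_1^{(6)}=e_1e_0$.
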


\begin{proof} If $\beta$ is equal to  $\beta_0$ or $\epsilon_3$, the lemma holds as
\begin{eqnarray*}
e_0f_t^{(6)}&=&e_0e_1e_0f_{t-1}^{(2)}\overset{(\ref{0.1.16})}{=}\delta e_0 f_{t-1}^{(2)}=\delta f_{t}^{(3)},\\
e_{\epsilon_3}f_t^{(6)}&=&r_1e_0r_1e_1e_0f_{t-1}^{(2)}=\delta r_1
e_0f_{t-1}^{(2)} =\delta r_1 f_{t}^{(3)}.
\end{eqnarray*}
If $\beta=\epsilon_{i+2}$ for $1<i<n$, then $e_{\beta}=r_i\cdots
r_2r_1e_0r_1r_2\cdots r_i$.  Hence
\begin{eqnarray*}
e_{\beta}f_t^{(6)}&=&r_i\cdots r_2r_1e_0r_1r_2\cdots r_i f_t^{(6)}\\
&\overset{(\ref{0.1.8})}{=}&r_i\cdots r_2r_1(e_0r_1r_2e_1e_0) r_3\cdots r_i f_{t-1}^{(2)}\\
&\overset{(\ref{0.1.13})}{=}&r_i\cdots r_2r_1(e_0e_2e_1e_0) r_3\cdots r_i f_{t-1}^{(2)}\\
&\overset{(\ref{0.1.9})+(\ref{0.1.16})}{=}&\delta r_i\cdots r_2r_1e_0 e_2 r_3\cdots r_i f_{t-1}^{(2)}\\
&=&\delta r_i\cdots r_2r_1 e_0 r_3\cdots r_i  (r_3\cdots r_i)^{-1} e_2 r_3\cdots r_i f_{t-1}^{(2)}\\
&\overset{(\ref{0.1.8})}{=}&\delta r_i\cdots r_2r_1 r_3\cdots r_i  e_0 (r_3\cdots r_i)^{-1} e_2 r_3\cdots r_i f_{t-1}^{(2)},\\
&\in& We_0e_{\epsilon_{i+2}-\epsilon_3} f_{t-1}^{(2)},\\
&\subseteq& We_0W( f_t^{(2)}\cup f_{t-1}^{(2)})W \quad\quad \quad\quad \mbox{by Lemma \ref{longef_t} }\\
&\subseteq& W( f_t^{(3)}\cup f_{t+1}^{(3)}\cup f_t^{(5)}\cup f_{t+1}^{(5)})W \quad\mbox{by Lemma \ref{shortef_t} }\\
&\subseteq&  M.
\end{eqnarray*}

\end{proof}

\begin{lemma}\label{longee1e0f}
If $\beta\in\Psi^+$ is a long root, then $e_{\beta}f_t^{(6)}$
belongs to $M$.
\end{lemma}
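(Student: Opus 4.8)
The plan is to use the factorization $f_t^{(6)}=e_1e_0f_{t-1}^{(2)}$ and to organise the proof, as in Lemma~\ref{longee0e1f}, according to the position of the long root $\beta$ relative to $\beta_0=\epsilon_2$ and $\beta_1=\epsilon_3-\epsilon_2$. I would first clear away the two degenerate cases. If $\beta=\beta_1$, then (\ref{0.1.5}) gives $e_\beta f_t^{(6)}=e_1e_1e_0f_{t-1}^{(2)}=\delta f_t^{(6)}$. If $\beta=\beta_1^{*}$, then $e_1^{*}e_1=r_0e_1r_0e_1=e_1e_0e_1$ by (\ref{0.1.19}), so that $e_{\beta_1^{*}}f_t^{(6)}=e_1e_0e_1e_0f_{t-1}^{(2)}=\delta f_t^{(6)}$ after applying (\ref{0.1.16}). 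Both outcomes lie in $M$.

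Second, suppose $\beta$ is long with $(\beta,\beta_1)\neq0$; then $\beta\neq\beta_1,\beta_1^{*}$ and $\{\beta,\beta_1\}$ spans a subsystem of type $\ddA_2$, so the relation $e_\beta e_1=r_1r_\beta e_1$ (the instance of (\ref{0.1.13}) already used in Lemma~\ref{longef_t}) applies and gives
$$e_\beta f_t^{(6)}=e_\beta e_1e_0f_{t-1}^{(2)}=r_1r_\beta e_1e_0f_{t-1}^{(2)}=r_1r_\beta f_t^{(6)}.$$
As $r_1r_\beta\in W$, Lemma~\ref{allr} shows this belongs to $M$.

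The remaining and principal case is $(\beta,\beta_1)=0$ with $\beta\neq\beta_1^{*}$. I would begin by noting that this forces $(\beta,\beta_0)=0$ as well: a positive long root orthogonal to $\epsilon_3-\epsilon_2$ but not to $\epsilon_2$ must have equal nonzero coefficients on $\epsilon_2$ and $\epsilon_3$, and the only such root is $\epsilon_3+\epsilon_2=\beta_1^{*}$. Hence $\beta$ lies in the subsystem orthogonal to $\epsilon_2$ and $\epsilon_3$, and by Lemma~\ref{2orthogonalroots} the element $e_\beta$ commutes with both $e_0$ and $e_1$, whence $e_\beta f_t^{(6)}=e_1e_0\,e_\beta f_{t-1}^{(2)}$. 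Now Lemma~\ref{longef_t} tells us that $e_\beta f_{t-1}^{(2)}\in\delta^{\Z}Wf_{t-1}^{(2)}\cup\delta^{\Z}Wf_t^{(2)}W$, and the identities $e_1e_0f_{t-1}^{(2)}=f_t^{(6)}$ and $e_1e_0f_t^{(2)}=f_{t+1}^{(6)}$ of Notation~\ref{df:fti} should then deliver $e_\beta f_t^{(6)}\in\delta^{\Z}Wf_t^{(6)}W\cup\delta^{\Z}Wf_{t+1}^{(6)}W\subseteq M$.

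The step requiring the most care — and the one I expect to be the main obstacle — is pushing the prefix $e_1e_0$ past the Weyl-group factors produced by Lemma~\ref{longef_t}, which is legitimate only if those factors commute with $e_0$ and $e_1$. This is exactly what the reduction $(\beta,\beta_0)=(\beta,\beta_1)=0$ secures: since $\tilde Z_{t-1}$ also lies in the subsystem orthogonal to $\epsilon_2$ and $\epsilon_3$ for $t\le n/2$, the whole rewriting of $e_\beta f_{t-1}^{(2)}$ takes place among reflections that centralise $e_0$ and $e_1$ by (\ref{0.1.8}), so every conjugating factor may be moved to the outside without disturbing $e_1e_0$. Finally I would verify the small values (notably $t=1$, where $f_{t-1}^{(2)}$ is the empty product, and small $n$) directly, as in the earlier lemmas of this section.
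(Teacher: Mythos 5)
Your proof is correct, and your two preliminary cases coincide with the paper's ``otherwise'' case: the paper disposes of $\beta\in\{\beta_1,\beta_1^*\}$ via $\delta e_1e_0f_{t-1}^{(2)}$ (your computation with (\ref{0.1.19}) and (\ref{0.1.16}) makes this explicit) and of $\beta$ non-orthogonal to $\beta_1$ via the conjugate of (\ref{0.1.13}), exactly as you do. In the principal case both arguments start identically, commuting $e_\beta$ past $e_1e_0$ and invoking Lemma~\ref{longef_t}, but then they genuinely diverge. The paper treats the $W$-factors produced by Lemma~\ref{longef_t} as black boxes and absorbs the prefix across them one letter at a time: $e_0wf^{(2)}=we_{w^{-1}\beta_0}f^{(2)}$ with $w^{-1}\beta_0$ short, so Lemma~\ref{shortef_t} converts everything into conjugates of $f^{(3)}$ and $f^{(5)}$, after which $e_1$ is absorbed by Lemmas~\ref{longee0f} and~\ref{longee0e1f}; this cascade uses only the \emph{statements} of earlier lemmas but scatters the answer across the $i=3,5$ families. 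You instead control the conjugators: you reopen the proof of Lemma~\ref{longef_t} and observe that since $\beta$ and $\tilde Z_{t-1}$ lie in the $\ddB_{n-2}$-subsystem on coordinates $4,\ldots,n+1$ (valid throughout the range $t\le n/2$ of $f_t^{(6)}$), all conjugating elements can be chosen among reflections fixing $\epsilon_2$ and $\epsilon_3$, which centralize $e_0$ and $e_1$ by Lemma~\ref{lm:SimpleRootRels}; this keeps the argument inside the $i=6$ family and yields the sharper conclusion $e_\beta f_t^{(6)}\in\delta^{\Z}Wf_t^{(6)}W\cup\delta^{\Z}Wf_{t+1}^{(6)}W$. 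The trade-off is that your route rests on a refinement of Lemma~\ref{longef_t} that is not literally its statement---you rightly flag this as the delicate step, and your justification is sound. One boundary point you could make explicit rather than defer to ``small cases'': the subcase producing $f_{t+1}^{(6)}$ needs $t+1\le n/2$, which holds automatically, since a long $\beta$ orthogonal to $\tilde Z_{t-1}$ and supported on coordinates $\ge 4$ forces $2(t-1)+2\le n-2$.
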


\begin{proof} If $\beta=\epsilon_{j+2}\pm \epsilon_{i+2}$ with $1<i<j<n$, then the lemma holds as
\begin{eqnarray*}
e_{\beta}e_1e_0f_{t-1}^{(2)}&=& e_1(e_0 (e_{\beta}f_{t-1}^{(2)}))\\
&\in& e_1e_0(\delta^{\Z}W f_{t-1}^{(2)}W\cup\delta^{\Z}W f_{t}^{(2)}W)\quad \quad \mbox{by Lemma \ref{longef_t}}\\
&\subseteq&\delta^{\Z}( e_1 W f_{t}^{(3)}W\hskip-.1cm \cup\hskip-.1cm W
f_{t-1}^{(5)}W\hskip-.1cm\cup\hskip-.1cm W
f_{t+1}^{(3)}W\hskip-.1cm\cup\hskip-.1cm
 W f_{t}^{(5)}W)\ \mbox{by Lemma \ref{shortef_t}}\\
&\subset& M \quad \quad \mbox{by Lemma \ref{longee0f} and Lemma \ref{longee0e1f}}.
\end{eqnarray*}
Otherwise, $\beta$ is not orthogonal to $\beta_1$, and so
$e_{\beta}e_1e_0f_{t-1}^{(2)}\overset{(\ref{0.1.13})}{=}r_1 r_{\beta}
e_1e_0f_{t-1}^{(2)}$ if $\beta\not\in \{\beta_1,\,\beta_1^*\}$, or $\delta
e_1e_0f_{t-1}^{(2)}$ if $\beta\in \{\beta_1,\,\beta_1^*\}$. Therefore, the
lemma holds.
 \end{proof}

\np Finally, we deal with $f_t^{(i)}$ for $i=4$.

\begin{lemma}\label{allee2r1e0e1e2f}
For each $\beta\in \Psi^+$, the monomial  $e_{\beta}f_t^{(4)}$
belongs to $M$.
 \end{lemma}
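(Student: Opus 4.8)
The strategy is to imitate the treatment of $f_t^{(5)}$ and $f_t^{(6)}$ in Lemmas~\ref{longee0e1f} and~\ref{longee1e0f}, using the factorization $f_t^{(4)}=gf_{t-1}^{(2)}$ of Notation~\ref{df:fti} together with the rewriting rules for $g=e_2r_1e_0e_1e_2$ collected in Lemma~\ref{e2r1e0e1e2}. The word $g$ is supported on the nodes $0,1,2$, that is, on the coordinates $\epsilon_2,\epsilon_3,\epsilon_4$, whereas $f_{t-1}^{(2)}=e_{\tilde Z_{t-1}}$ is supported on nodes $\geq 4$; for $t$ in the stated range these supports are non-adjacent, so $g$ and $f_{t-1}^{(2)}$ commute and $f_t^{(4)}$ is fixed under opposition. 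By Proposition~\ref{propM} and Lemma~\ref{allr} it therefore suffices to prove $e_\beta f_t^{(4)}\in M$ for every $\beta\in\Psi^+$. Moreover, for $w\in N_t^{(4)}$ we have $we_\beta w^{-1}=e_{w\beta}$ by Lemma~\ref{lm:SimpleRootRels} and $wf_t^{(4)}=f_t^{(4)}w$ by Proposition~\ref{propM}(i), so $e_\beta f_t^{(4)}=w^{-1}e_{w\beta}f_t^{(4)}w$; since $M$ is $W$-stable, we may replace $\beta$ by any representative of its $N_t^{(4)}$-orbit. As $N_t^{(4)}$ has only finitely many orbits on $\Psi^+$, finitely many cases remain, and I split them according to whether $\beta$ is orthogonal to the block $\{\epsilon_2,\epsilon_3,\epsilon_4\}$ or meets it.

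First I would dispose of the roots $\beta$ supported on $\{\epsilon_5,\ldots,\epsilon_{n+1}\}$, which are exactly those for which $e_\beta$ commutes with $g$. Here $e_\beta f_t^{(4)}=g\,(e_\beta f_{t-1}^{(2)})$, and by Lemma~\ref{longef_t} (or Lemma~\ref{shortef_t} if $\beta$ is short) the factor $e_\beta f_{t-1}^{(2)}$ lies in $\delta^{\Z}Wf_{t-1}^{(2)}\cup\delta^{\Z}Wf_t^{(2)}W$, with the $W$-element on the left again supported on high nodes and hence commuting past $g$. Using the two identities $f_t^{(4)}=gf_{t-1}^{(2)}$ and $f_{t+1}^{(4)}=gf_t^{(2)}$, both immediate from Notation~\ref{df:fti}, this places $e_\beta f_t^{(4)}$ in $\delta^{\Z}Wf_t^{(4)}\cup\delta^{\Z}Wf_{t+1}^{(4)}W\subseteq M$.

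The harder cases are the roots meeting the block, i.e.\ those involving one of $\epsilon_2,\epsilon_3,\epsilon_4$ (including the bridging root $\beta_3=\epsilon_5-\epsilon_4$). Here I would conjugate by $N_t^{(4)}$ to reduce $\beta$ to one of finitely many low-index representatives and then peel off the short-root generator using Lemma~\ref{e2r1e0e1e2}: for $\beta=\beta_0$ directly, via $e_0g=\delta e_0e_2$, and for the remaining representatives via the companion identities $e_0r_1g=\delta e_0r_2r_1e_2$, $e_0r_1r_2r_3g=\delta e_0e_1e_3r_2r_3$, and $e_1r_2r_3g=e_1e_0r_1r_2r_3e_2$. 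Each such identity rewrites the product as $\delta^{\pm1}$ times $e_0$ (or $e_1e_0$) followed by a short word in $e_1,e_2,e_3,r_2,r_3$; absorbing this word into $f_{t-1}^{(2)}$ by Lemma~\ref{longef_t} and using $f_{t+1}^{(3)}=e_0f_t^{(2)}$, $f_t^{(5)}=e_0e_1f_{t-1}^{(2)}$, and $f_t^{(6)}=e_1e_0f_{t-1}^{(2)}$, I would identify $e_\beta f_t^{(4)}$ with a $W$-translate of some $f_{t'}^{(3)}$, $f_{t'}^{(5)}$, or $f_{t'}^{(6)}$. These belong to $M$ by Lemmas~\ref{longee0f}, \ref{shortee0f}, \ref{longee0e1f}, \ref{shortee1e0f}, and~\ref{longee1e0f}, which completes the proof, and with it the verification underlying Theorem~\ref{rewrittenforms}.

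The step I expect to be the main obstacle is the bookkeeping in the previous paragraph: one must check that the conjugators bringing an arbitrary block-meeting root into the canonical shapes of Lemma~\ref{e2r1e0e1e2} can be chosen inside $N_t^{(4)}$, so that they genuinely fix the $f_{t-1}^{(2)}$ factor and interact with $g$ only through the tabulated relations, and that the resulting index $t'$ and power of $\delta$ then match one of the admissible families $f_{t'}^{(i)}$. This relies on the explicit description of $A_t^{(4)}$ in Definition~\ref{df:CD}, in particular on the element $r_{\epsilon_{n+4-2t}-\epsilon_4}r_2r_{n+3-2t}r_{\epsilon_{n+4-2t}-\epsilon_4}$, and is the analogue for $f_t^{(4)}$ of the delicate conjugations carried out for $f_t^{(5)}$ in Lemma~\ref{longee0e1f}.
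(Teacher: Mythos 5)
There is a genuine gap, and it sits in your Case 1. You claim that the roots supported on $\{\epsilon_5,\ldots,\epsilon_{n+1}\}$ are ``exactly those for which $e_\beta$ commutes with $g$'' and accordingly handle short roots $\epsilon_j$ with $j\geq 5$ by commuting $e_{\epsilon_j}$ past $g$ and invoking Lemma \ref{shortef_t}. This fails: a short root $\epsilon_j\in\Psi^+$ lifts under $\fp^{-1}$ to the pair $\{\epsilon_j-\epsilon_1,\epsilon_j+\epsilon_1\}$, so the diagram of $\psi\phi(e_{\epsilon_j})$ has horizontal strands through the point $1$, which lies inside the support $\{1,2,3,4\}$ of $\psi\phi(g)$ (recall $g$ belongs to the class $M^{(4)}$, with crossing vertical strands at $1,2$ and horizontal strands $\{3,4\}$). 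A direct diagram computation shows $e_{\epsilon_5}g$ has a top strand $\{1,5\}$ while $ge_{\epsilon_5}=(e_{\epsilon_5}g)^{\op}$ has a top strand $\{2,5\}$, so $e_{\epsilon_j}g\neq ge_{\epsilon_j}$. Only the \emph{long} roots $\epsilon_j\pm\epsilon_i$ with $5\leq i<j$ have lifts avoiding $\epsilon_1$ and genuinely commute with $g$, which is why the paper restricts the commutation step $e_\beta gf_{t-1}^{(2)}=ge_\beta f_{t-1}^{(2)}$ to $\beta=\epsilon_{j+2}\pm\epsilon_{i+2}$ with $2<i<j<n$. Under your case split, the short roots $\epsilon_j$, $j\geq5$, are not ``block-meeting'' (your block is $\{\epsilon_2,\epsilon_3,\epsilon_4\}$ plus $\beta_3$), so they land in the broken branch; and even your parenthetical fallback does not save them, since Lemma \ref{shortef_t} turns $e_{\epsilon_j}f_{t-1}^{(2)}$ into a conjugate $sf_{t'}^{(3)}s^{-1}$ or $sf_{t'}^{(5)}s^{-1}$ with $s$ not centralizing $g$, and $g\,sf_{t'}^{(i)}s^{-1}$ is not one of the normal forms of Theorem \ref{rewrittenforms} and does not obviously lie in $M$.

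The paper's proof treats exactly these roots first and differently: it expands $e_{\epsilon_{i+2}}=r_i\cdots r_2r_1e_0r_1r_2\cdots r_i$, commutes only the harmless tail $r_4\cdots r_i$ past $g$, and then collapses the interacting head with (\ref{errreree1}), namely $e_0r_1r_2r_3g=\delta e_0e_1e_3r_2r_3$ (with (\ref{errreree2}) and (\ref{errreree3}) for $\epsilon_4$, $\epsilon_3$, $\beta_0$), after which Lemmas \ref{longef_t} and \ref{shortef_t} finish. So the identities you list in Case 2 are the right tools, but your case division assigns them too narrow a scope: they must absorb \emph{all} short roots, not just those supported on the block. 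A secondary remark: your announced ``main obstacle'' --- choosing conjugators inside $N_t^{(4)}$ --- is not actually where the difficulty lies; since $M$ is stable under two-sided multiplication by $W(\ddB_n)$ (Lemma \ref{allr}), one may multiply freely by Weyl group elements, and the real bookkeeping (as in the paper's treatment of $\beta=\epsilon_{j+2}-\epsilon_2$ via (\ref{errreree4})) is tracking which prefix of the conjugating word fails to commute with $g$ and collapsing it by the tabulated relations.
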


 \begin{proof}
First assume that $\beta$ is a short root. If $\beta=\epsilon_{i+2}$ with
$i>2$, then
\begin{eqnarray*}
e_{\beta}f_{t}^{(4)}&=&e_{\beta}gf_{t-1}^{(2)}=r_i\cdots r_4 r_3 r_2r_1e_0r_1r_2r_3\cdots r_i
gf_{t-1}^{(2)}\\ &\overset{(\ref{0.1.8})}{=}&r_i\cdots r_4 r_3
r_2r_1(e_0r_1r_2r_3 g) r_4\cdots r_if_{t-1}^{(2)}\\
&\overset{(\ref{errreree1})}{=}& \delta r_i\cdots r_4 r_3 r_2r_1(e_0(e_1
(e_3r_2\cdots r_i f_{t-1}^{(2)})))\\
&\in&\delta^{\Z} We_0f_{t'}^{(2)}W\qquad
\mbox{by Lemma \ref{longef_t}}\\
&\subseteq&M\qquad \mbox{by Lemma \ref{shortef_t}}
\end{eqnarray*}
If $\beta=\epsilon_4$, $\epsilon_3$, or $\beta_0$, the same argument as
above can be applied with (\ref{errreree2}) and (\ref{errreree3}) in Lemma
\ref{e2r1e0e1e2}.

Next assume $\beta$ is a long root.  If $\beta=\epsilon_{j+2}\pm
\epsilon_{i+2}$ with $2<i<j<n$,  then $e_{\beta}gf_{t-1}^{(2)}=g e_{\beta}
f_{t-1}^{(2)}$.  If $\beta$ is not orthogonal to $\tilde{Z}_{t-1}$ (see
Proposition \ref{classification}), then either $e_{\beta}
f_{t-1}^{(2)}=r_{s}r_{\beta}f_{t-1}^{(2)}$, for some $\beta_s\in
\tilde{Z}_{t-1}$ not orthogonal to $\beta$, or $e_{\beta}
f_{t-1}^{(2)}=\delta f_{t-1}^{(2)}$. Now the lemma holds as $
r_{s}r_{\beta}g=gr_{s}r_{\beta}$.  Otherwise,
$j<n+2-2t$, and we can find some element $r\in \tilde{N}_{t-1}^{(2)}$
 such
that $r\beta=\beta_{n+1-2t}$. Then $e_{\beta}
f_{t-1}^{(2)}=r^{-1}e_{n+1-2t}rf_{t-1}^{(2)}=r^{-1}e_{n+1-2t}f_{t-1}^{(2)}r=\delta^{-1}r^{-1}f_{t}^{(2)}r
$, for $r^{-1}$ commutes with $g$, hence the lemma holds by Lemma
\ref{allr}.

If $\beta=\epsilon_{j+2}\pm \epsilon_{i+2}$ with $0<i\leq 2 \leq j<n$, the
root $\beta$ is not orthogonal to $\beta_2$ or $\beta\in
\{\beta_2,\,\beta_2^{*}\}$, hence $e_\beta g=r_2r_{\beta}g$ or $\delta g$,
which implies that the lemma holds by Lemma \ref{allr}.

If
$\beta=\epsilon_{j+2}\pm \epsilon_2$, then $r_0\beta =\beta^*$ and $r_0\in
N_t^{(2)}$, so it suffices to consider
$\beta=\epsilon_{j+2}-\eps_2$. If $2<j$, then
 \begin{eqnarray*}
 e_{\beta}f_{t}^{(2)}&=& e_{\beta}gf_{t-1}^{(2)}=r_j\cdots r_4 r_3 r_2e_1r_2r_3\cdots r_j gf_{t-1}^{(2)}\\
 &=&r_j\cdots r_4 r_3 r_2(e_1r_2r_3 g) r_4\cdots r_jf_{t-1}^{(2)}\\
 &\overset{(\ref{errreree4})}{=}& r_j\cdots r_4 r_3
 r_2(e_1(e_0r_1r_2r_3(e_2r_4\cdots r_j f_{t-1}^{(2)})))\\
&\in&\delta^{\Z} W e_1e_0Wf_{t-1}^{(2)}W\qquad
\mbox{by Lemma \ref{longef_t}}\\
&\subseteq&\delta^{\Z} W e_1Wf_{t-1}^{(2)}W\qquad
\mbox{by Lemma \ref{shortef_t}}\\
&\subseteq&\delta^{\Z} Wf_{t-1}^{(2)}W,\qquad
\mbox{by Lemma \ref{longee1e0f}}
\end{eqnarray*}
and we are done.
The remaining two cases are $\beta=\beta_1$ and $\beta=\beta_1+\beta_2$.
These follow readily from $e_1e_2=r_2r_1e_2$ and $r_2e_1r_2e_2=r_1e_2$.
This proves the lemma.
\end{proof}

\np By means of Lemmas \ref{shortef_t}----\ref{allee2r1e0e1e2f}, we have
shown that $e_\beta f_t^{(i)}\in M$ for each $\beta\in\Psi^+$ and each
$i\in\{1,\ldots,6\}$, which suffices to complete the proof of Theorem
\ref{rewrittenforms}.

We proceed to give an upper bound for the rank of the Brauer algebra
$\Br(\ddB_n)$. By Theorem \ref{rewrittenforms}, the upper bound is given by
\begin{eqnarray}\label{eq:sum}
\sum_{i=1}^{6}|D_{t,L}^{(i)}||D_{t,R}^{(i)}||C_t^{(i)}|.
\end{eqnarray}

Table \ref{tab:1} lists the cardinalities of $D_{t,L}^{(i)}$,
$D_{t,R}^{(i)}$, and $C_t^{(i)}$.

\begin{table}[!htb]
\caption{Cardinalities of coset and centralizers}
\label{tab:1}
\begin{center}
\begin{tabular}{|c|c|c|c|}
\hline
  $D_{0}^{(1)}$ & $1$& $C_{0}^{(1)}$ & $2^{n} n!$\\
  \hline
  $D_{t}^{(1)}$ & $2^t \binom{n}{2t} t!!$& $C_{t}^{(1)}$ & $2^{n+1-2t}(n-2t)!$\\
   \hline
  $D_t^{(2)}$  & $\binom{n}{2t} t!!$&$C_t^{(2)}$  & $(n-2t)!$ \\
  \hline
  $D_t^{(3)}$& $n \binom{n-1}{2t-2} (t-1)!!$ &$C_t^{(3)}$& $(n+1-2t)!$ \\
  \hline
  $D_t^{(4)}$ & $n \binom{n-1}{2t} t!!$ &$C_t^{(4)}$ & $(n-1-2t)!$\\
 \hline
  $D_t^{(5)}$& $n(n-1)\binom{n-2}{2t-2} (t-1)!!$ &$C_t^{(5)}$& $(n-2t)!$ \\
  \hline
  $D_t^{(6)}$ & $\binom{n}{2t} t!!$&$C_t^{(6)}$& $(n-2t)!$ \\
  \hline
\end{tabular}
\end{center}
\end{table}
\begin{lemma}\label{numerical}
For $0<t\leq (n+1)/2$,
$$\sum_{i=2}^{6}|D_{t,L}^{(i)}||D_{t,R}^{(i)}||C_t^{(i)}|=\left(\binom{n+1}{2t}t!!\right)^2 (n+1-2t)!.$$
\end{lemma}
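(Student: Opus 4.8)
The plan is to establish the identity by direct substitution of the cardinalities recorded in Table~\ref{tab:1}, followed by an elementary manipulation of factorials. Throughout I use the convention of the paper that $k!!$ is the number of perfect matchings on $2k$ points, so that $k!! = (2k-1)!! = (2k)!/(2^k\,k!)$; the only consequences I shall need are the two elementary relations
\[
\frac{(t-1)!!}{t!!}=\frac{1}{2t-1}
\qquad\text{and}\qquad
\frac{(2t)!}{(2t-2)!}=2t(2t-1).
\]
First I would unwind the summand. By Definition~\ref{df:Ns} one has $D_{t,L}^{(i)}=D_{t,R}^{(i)}=D_t^{(i)}$ for $i\in\{2,3,4\}$, whereas $D_{t,L}^{(5)}=D_{t,R}^{(6)}=D_t^{(5)}$ and $D_{t,L}^{(6)}=D_{t,R}^{(5)}=D_t^{(6)}$; since moreover $C_t^{(5)}=C_t^{(6)}=(n-2t)!$ by Definition~\ref{df:CD}, the summands for $i=5$ and $i=6$ are equal. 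Hence the sum reduces to
\[
\sum_{i=2}^{4}\bigl|D_t^{(i)}\bigr|^2\bigl|C_t^{(i)}\bigr|
+2\,\bigl|D_t^{(5)}\bigr|\,\bigl|D_t^{(6)}\bigr|\,(n-2t)!.
\]

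Next I would rewrite each binomial coefficient and double factorial of Table~\ref{tab:1} in terms of ordinary factorials. A short computation, using the two relations above, shows that all four surviving terms carry the common factor
\[
\Lambda:=\frac{(n!)^2\,(t!!)^2}{\bigl((2t)!\bigr)^2\,(n+1-2t)!},
\]
with the term $i=2$ equal to $\Lambda\,(n+1-2t)$, the term $i=3$ equal to $\Lambda\,(2t)^2$, the term $i=4$ equal to $\Lambda\,(n+1-2t)(n-2t)$, and the doubled term equal to $\Lambda\,4t(n+1-2t)$. On the other side, $\binom{n+1}{2t}=(n+1)!/\bigl((2t)!\,(n+1-2t)!\bigr)$ gives at once
\[
\Bigl(\binom{n+1}{2t}\,t!!\Bigr)^2(n+1-2t)! = (n+1)^2\,\Lambda .
\]

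It then remains to check the purely polynomial identity
\[
(n+1-2t)+(2t)^2+(n+1-2t)(n-2t)+4t(n+1-2t)=(n+1)^2 .
\]
Setting $u=n+1-2t$, so that $n-2t=u-1$, the left-hand side becomes $u+4t^2+u(u-1)+4tu=4t^2+4tu+u^2=(2t+u)^2=(n+1)^2$, which finishes the argument. The calculation is routine, so I expect no serious obstacle; the only points demanding care are the correct matching of the left and right coset families from Definition~\ref{df:Ns} (this is what produces the factor $2$ on the combined $i=5,6$ term, and prevents a spurious contribution) and the consistent use of the double-factorial convention, without which the ratio $(t-1)!!/t!!$ fails to collapse and the four terms do not assemble into the perfect square $(2t+u)^2$.
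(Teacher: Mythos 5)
Your proof is correct, but it takes a genuinely different route from the paper's. You verify the identity by direct algebra: substituting the cardinalities of Table~\ref{tab:1}, using Definition~\ref{df:Ns} to pair the $i=5$ and $i=6$ summands (correctly — this is indeed where the factor $2$ comes from, since $D_{t,L}^{(5)}D_{t,R}^{(5)}$ and $D_{t,L}^{(6)}D_{t,R}^{(6)}$ both yield the product $|D_t^{(5)}||D_t^{(6)}|$), extracting the common factor $\Lambda$, and reducing everything to the polynomial identity $u+4t^2+u(u-1)+4tu=(2t+u)^2$ with $u=n+1-2t$. I checked the four coefficients $(n+1-2t)$, $(2t)^2$, $(n+1-2t)(n-2t)$ and $4t(n+1-2t)$ against the table entries and they are all right, as is your double-factorial convention $t!!=(2t)!/(2^t\,t!)$ and the ratio $(t-1)!!/t!!=1/(2t-1)$. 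The paper argues combinatorially instead: it identifies each product $|D_{t,L}^{(i)}||D_{t,R}^{(i)}||C_t^{(i)}|$ with the number $|M^{(i)}_t|$ of diagrams of type $M^{(i)}$ (from the proof of Theorem~\ref{diagramimage}) having $t$ horizontal strands at the top, observes that the sets $M^{(i)}_t$, $i=2,\ldots,6$, partition the undecorated diagrams in $T^{=}_{n+1}\cap T^{0}_{n+1}$ with $t$ top horizontal strands, and evaluates the total by the classical Brauer-diagram count $\bigl(\binom{n+1}{2t}t!!\bigr)^2(n+1-2t)!$. The paper's route explains \emph{why} the identity holds and simultaneously cross-checks that the table entries really are the relevant coset and centralizer counts; your route is more elementary and self-contained, needing no diagram interpretation, but it takes Table~\ref{tab:1} on faith and needs one small caveat you should add: at the boundary $t=(n+1)/2$ (so $n$ odd), the summands for $i=2,4,5,6$ are vacuous — the corresponding $f_t^{(i)}$ do not exist and factors such as $(n-1-2t)!$ are meaningless, so these terms must be read as $0$ rather than manipulated — leaving only the $i=3$ term, which your identity with $u=0$ confirms equals $(n+1)^2\Lambda$. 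In the paper this degeneration is automatic because the corresponding diagram sets $M^{(i)}_t$ are simply empty.
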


\begin{proof}
Recall our $M^{(i)}$, $i=2,\,\ldots,\,6$, in the proof of Theorem
\ref{diagramimage}, and let $M^{(i)}_t$ be the subset of diagrams $M^{(i)}$
with $t$ horizontal strands at the top of their diagrams.  These $M^{(i)}_t$
consist of all possible diagrams with $t$ horizontal strands in
$T_{n+1}^{=}\cap T_{n+1}^{0}$.
The count of classical Brauer diagrams (of \cite{Brauer1937}) related to the
Brauer monoid of type $\Br(\ddA_{n})$ with $t$ horizontal strands at the top
can be conducted as follows.  First choose $2t$ points at the top (bottom)
and make $t$ horizontal strands; the remaining $n+1-t$ vertical strands
correspond to the elements of the Coxeter group of type $W(\ddA_{n-t})$.
Therefore the right hand side of the equality is the number of all possible
diagrams in $T_{n+1}^{=}\cap T_{n+1}^{0}$ with $t$ horizontal strands at
the top, and so equals
$$\sum_{i=2}^{6}|M^{(i)}_t|=\left(\binom{n+1}{2t}t!!\right)^2 (n+1-2t)!.$$
We compute $|M^{(i)}_t|$ for $i=2,\ldots,6$.

For $i=2$, we just count as above with $n+1$ replaced by
$n$, so
$$|M^{(2)}_t|=\left(\binom{n}{2t}t!!\right)^2 (n-2t)!=
|D_t^{(2)}|^2|C_t^{(2)}| = |D_{t,L}^{(2)}||D_{t,R}^{(2)}||C_t^{(2)}|.$$ For
$i=5$, we first choose two points from the top $n+1$ points except $1$ for
the horizontal strand from $1$ and the vertical strand from $\hat{1}$, and
we choose $2(t-1)$ points at the top from the remaining $n+1-3$ points at
the top for $t-1$ horizontal strands and $2t$ points at the bottom $n+1$
points except $\hat{1}$ for $t$ horizontal strands, and then the vertical
strands between the remaining $n-2t$ points at the top and the remaining
$n-2t$ points at the bottom will be corresponding to elements of the Coxeter
group of type $W(\ddA_{n-2t-1})$. Therefore,
$$|M^{(5)}_t|=n(n-1)\binom{n-2}{2t-2}(t-1)!!\binom{n}{2t}t!! (n-2t)!$$
$$= |D_t^{(5)}||D_t^{(6)}||C_t^{(5)}| = |D_{t,L}^{(5)}||D_{t,R}^{(5)}||C_t^{(5)}|.$$
By reversing the top and bottom, we obtain a one to one correspondence
between $M^{(5)}_t$ and $M^{(6)}_t$; it follows that
$$|M^{(6)}_t|=|M^{(5)}_t|=|D_{t,L}^{(6)}||D_{t,R}^{(6)}||C_t^{(6)}|.$$
For $i = 4$, we first choose one point from the bottom (top) $n+1$ points
except $\hat{1}$ ($1$) for the vertical strand from $1$ ($\hat{1}$); the
remaining count of horizontal strands and other vertical stands is as
in the classical case after replacing $n+1$ by $n-1$; hence 
$$|M^{(4)}_t|=n^2\left(\binom{n-1}{2t}t!!\right)^2 (n-1-2t)!=
|D_t^{(4)}|^2|C_t^{(2)}| = |D_{t,L}^{(4)}||D_{t,R}^{(4)}||C_t^{(4)}|.$$ For
$i = 3$, we first choose one point at the top (bottom) distinct from $1$
($\hat{1}$) for the horizontal strand from $1$ ($\hat{1}$); the
remaining count of other horizontal strands and vertical stands is as
in the classical case after replacing $n+1$ by $n-1$ and $t$ by $t-1$; it
follows that
\begin{eqnarray*}
|M^{(3)}_t|&=&n^2\left(\binom{n-1}{2t-2}(t-1)!!\right)^2 (n+1-2t)!=
|D_t^{(3)}|^2|C_t^{(3)}|\\
&=& |D_{t,L}^{(3)}||D_{t,R}^{(3)}||C_t^{(3)}|.
\end{eqnarray*}
The equality of the lemma now follows from the above $5$ equalities for $M^{(i)}_t$.
 \end{proof}

\begin{cor} \label{rankatmost}
The algebra $\Br(\ddB_n)$ has a spanning set over $\Z[\delta^{\pm 1}]$ of
size at most $f(n)$.
 \end{cor}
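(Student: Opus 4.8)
The plan is to combine Theorem \ref{rewrittenforms} with the cardinalities recorded in Table \ref{tab:1} and Lemma \ref{numerical}, thereby reducing the corollary to two elementary counting identities for Brauer diagrams. First I would invoke Theorem \ref{rewrittenforms}: every element of $\BrM(\ddB_n)$ equals, up to a power of $\delta$, a normal form $uf_t^{(i)}vw$ with $u\in D_{t,L}^{(i)}$, $w\in(D_{t,R}^{(i)})^{\op}$, $v\in C_t^{(i)}$, and $f_t^{(i)}\in F$. Since $\delta$ is a unit in $\Z[\delta^{\pm1}]$, the set of these normal forms spans $\Br(\ddB_n)$ over $\Z[\delta^{\pm1}]$, and its cardinality is at most
$$\sum_{t}\sum_{i=1}^{6}|D_{t,L}^{(i)}|\,|D_{t,R}^{(i)}|\,|C_t^{(i)}|,$$
namely the quantity (\ref{eq:sum}) summed over all admissible $t$. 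It then suffices to show this sum equals $f(n)$, and I would split it into the $i=1$ contribution and the $i\in\{2,\ldots,6\}$ contribution.

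For $i\in\{2,\ldots,6\}$ I would apply Lemma \ref{numerical} and sum over $t$, obtaining $\sum_{t\ge1}\bigl(\binom{n+1}{2t}t!!\bigr)^2(n+1-2t)!$. As explained in the proof of Lemma \ref{numerical}, each summand counts the undecorated $(n+1)$-connectors in $T^=_{n+1}\cap T^0_{n+1}$ with exactly $t$ horizontal strands at the top, so summing over $t\ge1$ counts all of $T^=_{n+1}\cap T^0_{n+1}$ and yields $(n+1)!!-(n+1)!$.

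For $i=1$, recall from Definition \ref{df:Ns} that $D_{t,L}^{(1)}=D_{t,R}^{(1)}=D_t^{(1)}$, so using Table \ref{tab:1} the contribution is $\sum_{t}|D_t^{(1)}|^2|C_t^{(1)}|$ with $t$ ranging over $0\le t\le\lfloor n/2\rfloor$. The $t=0$ term is $1\cdot 2^n n!$, while for $t\ge1$ the summand simplifies to
$$\bigl(2^t\binom{n}{2t}t!!\bigr)^2\,2^{n+1-2t}(n-2t)! = 2^{n+1}\bigl(\binom{n}{2t}t!!\bigr)^2(n-2t)!.$$
Here $\bigl(\binom{n}{2t}t!!\bigr)^2(n-2t)!$ counts the undecorated $n$-connectors with exactly $t$ horizontal strands, whence the matching identity $\sum_{t\ge0}\bigl(\binom{n}{2t}t!!\bigr)^2(n-2t)! = n!!$, the $t=0$ term being $n!$. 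Consequently the $i=1$ contribution equals $2^n n! + 2^{n+1}(n!!-n!) = 2^{n+1}n!! - 2^n n!$.

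Adding the two contributions gives $2^{n+1}n!! - 2^n n! + (n+1)!! - (n+1)! = f(n)$, which is the asserted bound. I expect the main obstacle to be purely bookkeeping: pinning down the $i=1$ count correctly — isolating the $t=0$ term and recognizing that the factors $2^t$ in $|D_t^{(1)}|$ and $2^{n+1-2t}$ in $|C_t^{(1)}|$ combine into the uniform decoration factor $2^{n+1}$ — and aligning the ranges of $t$ with the double-factorial identities $\sum_{t}\bigl(\binom{m}{2t}t!!\bigr)^2(m-2t)! = m!!$ for $m=n$ and $m=n+1$. Everything else is direct substitution from Table \ref{tab:1} and Lemma \ref{numerical}, together with the diagram interpretation already established in the proof of Lemma \ref{numerical}.
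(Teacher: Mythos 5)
Your proof is correct and follows essentially the same route as the paper: it reduces the bound to the sum (\ref{eq:sum}) via Theorem \ref{rewrittenforms}, handles $i\in\{2,\ldots,6\}$ by Lemma \ref{numerical}, and closes with the identity $\sum_{t}\bigl(\binom{k}{2t}t!!\bigr)^2(k-2t)!=k!!$ for $k=n,\,n+1$ (which the paper cites from \cite{PWZ1996} rather than rederiving by diagram counting). Note that your coefficient $2^{n+1}$ from $(2^t)^2\cdot 2^{n+1-2t}$ is the right one: the paper's intermediate display writes $2^{n}$ in front of the $i=1$ sum, an apparent typo, since only $2^{n+1}$ yields the stated total $2^{n+1}n!!-2^{n}n!+(n+1)!!-(n+1)!=f(n)$.
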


\begin{proof}
By (\ref{eq:sum}),  the rank of $\Br(\ddB_n)$ is at most
$$|W(\ddB_n)|+\sum_{t=1}^{[\frac{n}{2}]}|D_{t,L}^{(1)}||D_{t,R}^{(1)}||C_t^{(1)}|
+\sum_{t=1}^{[\frac{n+1}{2}]}\left(\binom{n+1}{2t}t!!\right)^2 (n+1-2t)!$$
$$=2^{n}\cdot
n!+2^{n}\sum_{t=1}^{[\frac{n}{2}]}\left(\binom{n}{2t}t!!\right)^2 (n-2t)!
+\sum_{t=1}^{[\frac{n+1}{2}]}\left(\binom{n+1}{2t}t!!\right)^2 (n+1-2t)!.$$
From \cite{PWZ1996}, it follows that
$$\sum_{t=0}^{[\frac{k}{2}]}\left(\binom{k}{2t}t!!\right)^2 (k-2t)!=k!!.$$
By applying this for $k=n$, $n+1$ to the last two summands of the above
equality, we obtain that the rank of $\Br(\ddB_n)$ is at most $2^{n+1}\cdot
n!!- 2^{n}\cdot n!+(n+1)!!-(n+1)!=f(n)$.
\end{proof}

\np We end this section with a proof of Theorem \ref{thm:main}.  By
Corollary \ref{rankatmost} there is a spanning set of $\Br(\ddB_n)$ of size
$f(n)$.  By Theorem \ref{diagramimage}, this set maps onto a spanning set of
$\SBr(\ddD_{n+1})$ of size at most $f(n)$.  Moreover, by the same theorem,
$\SBr(\ddD_{n+1})$ is a free algebra over $\Z[\delta^{\pm 1}]$ of rank
$f(n)$. This implies that the spanning set of $\Br(\ddB_n)$ is a basis and
that $\Br(\ddB_n)$ is free of rank $f(n)$.  In particular,
$\phi:\Br(\ddB_n)\to \SBr(\ddD_{n+1})$ is an isomorphism and Theorem
\ref{thm:main} is proved.
\end{section}

\begin{section}{Cellularity}
Recall from \cite{GL1996} that an associative
algebra $\alg$ over a commutative ring $R$ is cellular if there is a quadruple
$(\Lambda, T, C, *)$ satisfying the following three conditions.

\begin{itemize}
\item[(C1)] $\Lambda$ is a finite partially ordered set.  Associated to each
$\lambda \in \Lambda$, there is a finite set $T(\lambda)$.  Also, $C$ is an
injective map
$$ \coprod_{\lambda\in \Lambda} T(\lambda)\times T(\lambda) \rightarrow \alg$$
whose image is called a \emph{cellular basis} of $\alg$.

\item[(C2)]
The map $*:\alg\rightarrow \alg$ is an
$R$-linear anti-involution such that
$C(x,y)^*=C(y,x)$ whenever $x,y\in
T(\lambda)$ for some $\lambda\in \Lambda$.

\item[(C3)] If $\lambda \in \Lambda$ and $x,y\in T(\lambda)$, then, for any
element $a\in \alg$,
$$aC(x,y) \equiv \sum_{u\in T(\lambda)} r_a(u,x)C(u,y) \
\ \ {\rm mod} \ \alg_{<\lambda},$$ where $r_a(u,x)\in R$ is independent of $y$
and where $\alg_{<\lambda}$ is the $R$-submodule of $\alg$ spanned by $\{
C(x',y')\mid x',y'\in T(\mu)\mbox{ for } \mu <\lambda\}$.
\end{itemize}
Such a quadruple $(\Lambda, T, C, *)$ is called a {\em cell datum} for
$\alg$.

\begin{thm}
\label{th:cellular}
There is a cellular datum for $\Br(\ddB_n)\otimes_{\Z[\delta^{\pm 1}]} R$ if
$R$ is an integral domain in which $2$ and $\delta$ are invertible elements.
\end{thm}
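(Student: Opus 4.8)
The plan is to transport the problem to the diagram algebra $\BrD(\ddB_n)$ via the isomorphism $\psi\phi$ of Theorem \ref{diagramimage}, where a basis is given by the symmetric decorated diagrams, and then to build a cell datum in the manner of the type $\ddC$ algebra treated in \cite{CLY2010}. The point is that every basis diagram factors as a top half-diagram, a middle element of the Coxeter group $C_t^{(i)}$, and a bottom half-diagram; by Lemma \ref{CAW} each $C_t^{(i)}$ is of type $\ddA$, so $R C_t^{(i)}$ is a symmetric group algebra and is itself cellular (via the Murphy basis) over any $R$. Thus $\Br(\ddB_n)\otimes R$ will be exhibited as an iterated inflation of the algebras $R C_t^{(i)}$ along the coset data of Definition \ref{df:CD}, and cellularity follows once the inflation is checked to be compatible with the anti-involution and the ideal filtration.

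For the cell datum I would take $\alg=\Br(\ddB_n)\otimes R$ with $*$ the opposition anti-involution $x\mapsto x^{\op}$ of Proposition \ref{prop:opp}. The poset $\Lambda$ records the number $t$ of horizontal strands at the top of a diagram together with its decoration class (the data distinguishing $Z_t,\tilde Z_t,\bar Z_t$ of Proposition \ref{classification}), ordered so that a larger $t$ is lower; this is precisely the filtration of $\alg$ by the two-sided ideals of diagrams with at least a given number of horizontal strands, coming from the monoid action on admissible sets in Remark \ref{rm:cA}. For each $\lambda$ of level $t$ I refine the top half-diagram data by the cellular tableau index of $R C_t^{(i)}$, so that $T(\lambda)=D_{t,L}^{(i)}\times S(\mu)$ with $S(\mu)$ the cell index set of $R C_t^{(i)}$, and set $C\big((u,\mathfrak s),(w,\mathfrak t)\big)=\delta^{-N} u\, f_t^{(i)} c_{\mathfrak s\mathfrak t}\, w$, with $c_{\mathfrak s\mathfrak t}$ a Murphy basis element and $\delta^{-N}$ the power normalizing $f_t^{(i)} f_t^{(i)}\in\delta^{N}f_t^{(i)}C_t^{(i)}$. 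That this map is injective with image a basis is exactly the content of Theorem \ref{rewrittenforms}, giving (C1).

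Condition (C2) amounts to $C(x,y)^{\op}=C(y,x)$. Since $\op$ reverses products, fixes $F$ setwise, and restricts on each $R C_t^{(i)}$ to the anti-involution making the Murphy basis cellular, this is a direct check; the one delicate point is the pair $i=5,6$, where $(f_t^{(5)})^{\op}=f_t^{(6)}$, so that $\op$ interchanges these two families. This is reconciled by the bookkeeping of Definition \ref{df:Ns} (namely $D_{t,L}^{(5)}=D_{t,R}^{(6)}$ and $C_t^{(5)}=C_t^{(6)}$): the two families are fused into one cell on which $*$ acts by exchanging top and bottom half-diagrams. It is at this stage that $2$ must be invertible: passing through $\psi\phi$ into the type $\ddD_{n+1}$ diagrams brings in the decoration monoid $H$ with $\theta^2=\delta^2\theta$ and $\xi^2=\delta^2$, and separating the two decoration parities into orthogonal pieces of $T(\lambda)$ requires the idempotents $\tfrac12(1\pm\xi/\delta)$, exactly as for type $\ddC$ in \cite{CLY2010}.

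The main obstacle is condition (C3): for a generator $a\in\{r_j,e_j\}$ one must show $a\,C(x,y)\equiv\sum_{u} r_a(u,x)\,C(u,y)\pmod{\alg_{<\lambda}}$ with coefficients $r_a(u,x)$ independent of $y$. The decisive inputs are Lemmas \ref{shortef_t}--\ref{allee2r1e0e1e2f}, which evaluate every product $e_\beta f_t^{(i)}$ and show that left multiplication either keeps the number of horizontal strands fixed, landing in $\delta^{\Z}W f_t^{(i)}$, or strictly increases it, landing in a deeper ideal and hence in $\alg_{<\lambda}$. In the first case Proposition \ref{first3cases} lets one push the Weyl part across $f_t^{(i)}$, turning $a\,u\,f_t^{(i)}$ into $u'\,f_t^{(i)} c$ with $c\in C_t^{(i)}$; re-expanding $c\,c_{\mathfrak s\mathfrak t}$ by the cellularity of $R C_t^{(i)}$ produces coefficients depending only on $u'$, $\mathfrak s$ and $c$, and in particular not on the untouched right factor $w$ (that is, not on $y$). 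Throughout, the invertibility of $\delta$ is used to normalize the $f_t^{(i)}$ and to absorb the loop scalars (powers of $\delta$, and the $\theta,\xi$ of Figure \ref{ch5closedloop}) produced by these products. Assembling these verifications over all generators and all $\lambda$ completes the cell datum.
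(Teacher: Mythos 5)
Your overall skeleton --- transport via $\psi\phi$ to the diagram algebra, take $*$ to be opposition, filter by the number of horizontal strands, index cells by half-diagram data times cell data of the middle group algebras $RC_t^{(i)}$, get (C1) from Theorem \ref{rewrittenforms} and (C3) from Proposition \ref{first3cases} together with Lemmas \ref{shortef_t}--\ref{allee2r1e0e1e2f} --- is exactly the paper's strategy. But there is a genuine error at the foundation of your inflation: the claim that ``by Lemma \ref{CAW} each $C_t^{(i)}$ is of type $\ddA$, so $RC_t^{(i)}$ is a symmetric group algebra and is itself cellular (via the Murphy basis) over any $R$.'' This is false for $i=1$: Lemma \ref{CAW}(i) states $C_t^{(1)}\cong W(\ddB_{n-2t})\times W(\ddA_1)$, and Definition \ref{df:CD} gives $C_0^{(1)}=W(\ddB_n)$ and $C_t^{(1)}=\left<r_{n-2}^*,r_0,r_1,\ldots,r_{n-2-2t}\right>$, which contains $r_0$. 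Diagrammatically this is forced: for the cells coming from $f_t^{(1)}$ (diagrams with the vertical strand from $1$ to $\hat 1$), the $n-2t$ free vertical strands may carry decorations, so the middle symmetry group is a signed permutation group, not a symmetric group, and it cannot be absorbed into the half-diagram data at level $t$. The Murphy basis therefore does not apply to this family, and your assertion of cellularity ``over any $R$'' for the middle layer fails precisely where the theorem's hypotheses bite.

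This also means you have misplaced where the invertibility of $2$ is used. You attribute it to splitting decoration parities with the idempotents $\tfrac12(1\pm\xi/\delta)$, as in type $\ddC$; but in the paper the hypothesis $2\in R^\times$ enters through Geck's theorem (\cite[Theorem 1.1 and Corollary 3.2]{G2007}): the group ring $R[Y]$ of a Weyl group $Y$ with irreducible factors of type $\ddB$ or $\ddA$ is cellular when the bad prime $2$ is invertible, and this is applied to $Y=C_t^*=\psi\phi(C_t^{(1)})$ (the type-$\ddB$ middle factor) alongside $Y=C_t\cong W(\ddA_{n-2t})$. Your decoration-idempotent device does not repair the argument as written, because the type-$\ddB$ factor sits inside the middle algebra of the inflation, not in the half-diagram index sets; decomposing $R[W(\ddB_m)]$ into type-$\ddA$ pieces via such idempotents would itself require a Clifford-theoretic argument you have not supplied, and the direct route is simply to cite \cite{G2007} as the paper does. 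A secondary, smaller point: your poset must be specified so that the transitions of Lemma \ref{shortef_t}(ii), which keep $t$ fixed but move from the $f_t^{(1)}$ family to a $\theta$-decorated family, are strictly decreasing; the paper arranges this by declaring $t_1>(t_2,\theta)$ whenever $t_1\leq t_2$, which your description (``larger $t$ is lower'' plus an unspecified decoration class order) does not yet guarantee.
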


\begin{proof}
Let $R$ be as indicated and write
$\alg=\Br(\ddB_n)\bigotimes_{\Z[\delta^{\pm 1}]}R$. We introduce a
quadruple $(\Lambda, T, C, *)$ and prove that it is a cell datum for
$\alg$.  The map $*$ on $\alg$ will be the natural anti-involution
$\cdot^{\op}$ on $\alg$ over $R$.  By Proposition \ref{prop:opp}, the
natural anti-involution is an $R$-linear anti-involution of $\alg$.

By Theorem \ref{rewrittenforms} and Theorem
\ref{thm:main}, the Brauer algebra $\alg$ over $R$ has a basis consisting of
the elements of (i)--(vi) in Theorem \ref{rewrittenforms}.


For $t\in\{0,\ldots,\lfloor n/2\rfloor\}$,
let $C_t^*=\psi\phi(C^{(1)}_t)$ and
$C_t=\left<\psi(R_2),\psi\phi(C_t^{(2)})\right>\cong W(\ddA_{n-2t})\subset
W(\ddD_{n+1})$, and put
$Y=C_t^*$ or $C_t$.
As $Y$ is a Weyl group with irreducible factors of type $\ddB$ or
$\ddA$ and the coefficient ring $R$ satisfies the
conditions of \cite[Theorem 1.1]{G2007}, we conclude from \cite[Corollary
3.2]{G2007} that the group ring $R[Y]$ is a cellular
subalgebra of $\alg$.
Let $(\Lambda_Y, T_Y, C_Y, *_Y)$ be the corresponding
cell datum for $R[Y]$.  By \cite[Section 3]{G2007}, $*_Y$ is the map
$\cdot^{\op}$ on $R[Y]$. 

The underlying set $\Lambda$ is defined as the union of $\Lambda_1$
and $\Lambda_2$, where $\Lambda_1=\{t\}_{t=0}^{[\frac{n}{2}]}$,
$\Lambda_2=\{(t,\theta)\}_{t=1}^{[\frac{n+1}{2}]}$. A set of $t$ pairs
in $\{1,\ldots, n+1\}$ is called \emph{admissible} $t$-set in $\{1,\ldots,
n+1\}$ if no two pairs have a common number. We denote the set of all
admissible $t$-sets of $\{1,\ldots , n+1\}$ by $U_t^{n+1}$. A decorated
pair in $\{1,\ldots, n+1\}$ is a triple $\{i,j,+\}$ or $\{i,j,-\}$
with $1\leq i, j\leq n+1$ with $\pm$ for decorations.  A \emph{decorated
admissible} $t$-set in $\{1,\ldots, n+1\}$ is some admissible $t$-set
in $\{1,\ldots, n+1\}$ with each pair being decorated. We denote all
decorated admissible $t$-sets in $\{1,\ldots, n+1\}$ by
$U_t^{*n+1}$. The set of all decorated admissible $t$-sets in $\{1,\ldots,
n+1\}$ without $1$ appearing in any pair is denoted by $U_t^{|*n+1}$.
We view $U_t^{n+1}$ as the subset
of $U_t^{*n+1}$ of all admissible $t$-sets all of whose pairs are decorated
by $-$.  
For each $t\in \Lambda_1$, we define the associated
finite set to be
$$T(t)=\{(u,v)\mid u\in U_t^{|*n+1},\, v\in T_{C_t^*} \}.$$ For each
$(t,\theta)$, we define the associated finite set to be
$$T((t,\theta))=\{(u,v)\mid u\in U_t^{n+1},\, v\in T_{C_t} \}.$$ 
By Theorem \ref{diagramimage} and (C1) for $R(Y)$, there exists a map
$$D: \coprod_{\lambda\in \Lambda}T(\lambda)\times T(\lambda) \rightarrow
\BrMD(\ddB_n), $$ where the diagram of $D((u_1,v_1),(u_2,v_2))$ is given as
the top horizontal strands are strands between pairs in $u_1$ and decorated
for $+$, the bottom horizontal strands are similarly given by $u_2$ the free
(decorated) vertical strands and multiplied by $\xi \delta$ or not are given
by $C_{Y}(v_1, v_2)$, multiplied by $\theta\delta$ if $\lambda\in
\Lambda_2$.  We see $D$ is an injective map and its image is a basis of
$\BrMD(\ddB_n)$.  Therefore we define $C=\phi^{-1}\psi^{-1}D$. The partial
order on $\Lambda$ is given by
\begin{itemize}
\item $\lambda_1 >\lambda_2$ if
$\lambda_1=t_1<t_2=\lambda_2\in \Lambda_1$, \item (2) $\lambda_1 >\lambda_2$ if
$\lambda_1=(t_1,\theta)$, $\lambda_2=(t_2,\theta)\in \Lambda_2$ and
$t_1<t_2$, \item (3) $\lambda_1 >\lambda_2$ if $\lambda_1=t_1\in \Lambda_1$ and
$\lambda_2=(t_2,\theta)\in \Lambda_2$ and $t_1\leq t_2$.  
\end{itemize}
\noindent
It can be
illustrated by the following Hasse diagram, where $a>b$ is equivalent to the
existence of a directed path from $a$ to $b$.
\begin{center}
\phantom{longgggg}
\quad
\xymatrix{
0\ar[dr]\ar[r]&1\ar[r]\ar[d] & 2 \ar[d]\ar[r] &3\ar[d]\ar[r]&\cdots\ar[d]\\
         & (1,\theta) \ar[r] & (2,\theta)\ar[r]&(3,\theta)\ar[r]&\cdots
          }
\end{center}
In other words, we inherit the cellular structure of $\Br(\ddD_{n+1})$ in
\cite[section 6]{CFW2008}.  By Theorem \ref{thm:main} and Theorem
\ref{diagramimage}, the quadruple $(\Lambda, T, C, *)$ satisfies (C1).  From
the diagram representation of $\BrMD(\ddB_n)$ described in Theorem
\ref{diagramimage} and (C2) of $R[Y]$, the quadruple $(\Lambda, T, C, *)$
satisfies (C2) with $*=\cdot^{\op}$.  It remains to check condition (C3) for
$(\Lambda, T, C, *)$. For this we just need to consider $r_i
C((u_1,v_1),(u_2,v_2))$ and $e_i C((u_1,v_1),(u_2,v_2))$. This can be proved
by a case-by-case check using the lemmas in Section \ref{upperbound} or by
an argument using the diagram representation of $\BrMD(\ddB_n)$ and (C3) of
$R[Y]$.

We conclude that $(\Lambda, T, C, *)$ is a cell datum for
$\Br(\ddB_n)\otimes_{\Z[\delta^{\pm 1}]} R$.
\end{proof}

\np
\begin{rem} 
In \cite{KX1999}, K\"onig and Xi proved that Brauer algebras of type $\ddA$
are inflation cellular algebras, and also in \cite{BO2011}, Bowman proved
that the Brauer algebras found in \cite{CLY2010} cellularly stratified
algebras (a stronger version of inflation cellular algebras). Both kinds of
algebras have totally ordered sets $\Lambda$ associated to the cellular
structures. But Brauer algebras of type $\ddD$ and type $\ddB$, just have
partially ordered sets $\Lambda$ in the cell data given above. this explains
why we have not been able to use \cite{KX1999} for a cellularity proof.
\end{rem}
\end{section}


\begin{thebibliography}{99}
\bibitem{BO2011} C. Bowman, Brauer algebras of type $C$ are cellulary
stratified algebras, arXiv:1102.0438v1, [math.RT], 2 Feb 2011.

\bibitem{Brauer1937}
R. Brauer,
On algebras which are connected with the semisimple continous groups,
Annals of Mathematics, {\bf 38} (1937), 857--872.


\bibitem{ZhiChen}
Zhi Chen,
Flat connections and Brauer Type Algebras,
\url{arXiv:1102.4389v1}, [math.RT], 22 Feb 2011.



\bibitem{CFW2008}A.M.~Cohen, B.~Frenk and D.B.~Wales, Brauer algebras
of simply laced type, Israel Journal of Mathematics, {\bf 173} (2009)
335--365.


\bibitem{CGW2005}
A.M.~Cohen, D.A.H.~Gijsbers \& D.B.~Wales,
{\sl BMW algebras of simply laced type,}
J. Algebra, {\bf 286} (2005) 107--153.



\bibitem{CGW2008}A.M.~Cohen, Di\'{e} A.H.~Gijsbers and D.B.~Wales,
The BMW Algebras of type $\ddD_{n}$,
\url{arXiv:0704.2743}, [math.RT], April 2007.




\bibitem{CGW2009}A.M.~Cohen, Di\'{e} A.H.~Gijsbers and D.B.~Wales,
Tangle and Brauer diagram algebras of type $\ddD_{n}$, Journal of  Knot Theory and Its Ramifications,
Vol 18, No 4, April 2009, 447--483.

\bibitem{CLY2010}A.M.~Cohen, S.~Liu and S.~Yu, Brauer algebras of type C,
Journal of Pure and Applied Algebra, {\bf 216} (2012) 407--426.


\bibitem{CW2011} A.M.~Cohen, D.B.~Wales, \sl The Birman--Murakami--Wenzl
Algebras of Type E${}_n$, \rm Transformation Groups, {\bf 16} (2011)
681--715.



\bibitem{Crisp1996}
J. Crisp, Injective maps between Artin groups,
in Geometric Group Theory Down Under,
Lamberra 1996 (J. Cossey, C.F. Miller III, W.D. Neumann and M.Shapiro, eds.)
De Gruyter, Berlin, 1999, 119--137.


\bibitem{Dieck2003}
T. tom Dieck, Quantum groups and knot algebra, Lecture notes, May 4, 2004.

\bibitem{G2007}
M. Geck, Hecke algebras of finite type are cellular, Inventiones Mathematicae,
{\bf 169} (2007), 501--517.


\bibitem{Gra}
J.~J.~Graham, Modular representations
of Hecke algebras and related algebras, Ph.~D.~thesis, University of
Sydney (1995).


\bibitem{GL1996}
J.J. Graham and G.I. Lehrer, Cellular algebras, Inventiones Mathematicae
{\bf 123} (1996),
1--44.

\bibitem{KX1999}
S. ~K\"onig  and C.~Xi, Cellular algebras: inflations and Morita equivalences,
J. London Math. Soc. (2){\bf 60} (1999), 700--722
{\bf 123} (1996),
1--44.

\bibitem{H1999}
R. H\"aring-Oldenburg, The reduced  Birman-Wenzl algebra of Coxeter type of $\ddB$,
Journal of Algebra {\bf 213}, 437--466, 1999.


\bibitem{PWZ1996}
M.~Petkovsek, H.S.~Wilf and D.~Zeilberger, A=B, A.K.~Peters, Wellesley MA, 1996.
http://www.cis.upenn.edu/\~\,wilf

\end{thebibliography}
\end{document}